\newtheorem{theorem}{Theorem}[section]
\newtheorem{maintheorem}{Theorem}
\newtheorem{question}{Question}
\newtheorem{definition}[theorem]{Definition}
\newtheorem{proposition}[theorem]{Proposition}
\newtheorem{lemma}[theorem]{Lemma}
\newtheorem{remark}[theorem]{Remark}
\newtheorem{assumption}[theorem]{Assumption}
\numberwithin{equation}{section}
\newcommand{\R}{\mathbb{R}}
\newcommand{\Q}{\mathbb{Q}}
\newcommand{\N}{\mathbb{N}}
\newcommand{\Z}{\mathbb{Z}}
\newcommand{\e}{\varepsilon}
\newcommand{\supp}{\mathrm{supp}}
\newcommand{\Lip}{\mathrm{Lip}}
\newcommand{\diam}{\mathrm{diam}}
\newcommand{\dist}{\mathrm{dist}}
\newcommand{\vol}{\mathrm{vol}}
\newcommand{\Ch}{\mathrm{Ch}}
\newcommand{\cd}{\mathsf{CD}}
\newcommand{\rcd}{\mathsf{RCD}}
\newcommand{\sfd}{\mathsf d}
\newcommand{\di}{{\rm d}}
\newcommand{\mm}{\mathfrak{m}}
\title{The infinitesimal structure of manifolds with non-continuous Riemannian metrics}
\author{Vanessa Ryborz\thanks{University of Oxford. email: vanessa.ryborz@chch.ox.ac.uk \\
ORCID: 0009-0009-4578-3620
}}
\date{}
\begin{document}

\maketitle
\begin{abstract}
 This paper investigates the failure of certain metric measure spaces to be infinitesimally Hilbertian or quasi-Riemannian manifolds, by constructing examples arising from a manifold \( M \) endowed with a Riemannian metric  $g$ that is possibly discontinuous, with $ g, g^{-1} \in L^\infty_{\mathrm{loc}} $ and $ g \in W^{1,p}_{\mathrm{loc}} $ for $ p \leq \dim M - 1 $.
 \let\thefootnote\relax\footnotetext{\textit{MSC2020:} 53C20, 53C23 \\
 \textit{Keywords}.  Riemannian manifolds of low regularity, metric measure spaces, infinitesimal Hilbertianity}
\end{abstract}

\tableofcontents

{\small \textbf{Acknowledgements.} 
The author gratefully acknowledges Andrea Mondino for his supervision and guidance, and thanks Alessandro Cucinotta and Alexander Lytchak for valuable discussions. The author is supported by a postgraduate scholarship from the Mathematical Institute at the University of Oxford. Further financial support from the Deutsche Forschungsgemeinschaft (DFG, German Research Foundation) under Germany’s Excellence Strategy – EXC-2047/1 – 390685813, which funded the 2025 Trimester Program ``Metric Analysis” in Bonn, is also acknowledged with gratitude.
}

\section{Introduction}
Manifolds with (semi)-Riemannian metrics of low regularity (in particular below $C^2$) have been of growing interest and importance in the field of geometric analysis. They may arise from gluing constructions, or in the case of Lorentzian signature $(-, +, \ldots, +)$, serve to understand physically relevant phenomena from general relativity (see for instance \cite{calisti2025hawking}). 

One approach to generalise Riemannian manifolds is to drop any underlying smoothness assumption and study metric measure spaces. A metric measure space is a triplet $(X, \sfd, \mathfrak{m})$, where $(X, \sfd)$ is a complete and separable metric space and $\mathfrak{m}$ is a $\sigma$-finite Borel measure on $X$ (playing the role of the volume measure). For a similar generalisation of Lorentzian manifolds (see \cite{kunzinger2018lorentzian, cavalletti2020optimal}).

Various concepts from Riemannian geometry can be generalised to metric measure spaces such as synthetic notions of the Ricci curvature bounded below by $K$ and the dimension bounded above by $N$ for some $K \in \R$ and $N \in [1, \infty]$; the curvature dimension condition $\cd(K, N)$ defined by Lott-Villani \cite{lott2009ricci, LottVillaniJFA} and Sturm \cite{sturm2006geometryI, sturm2006geometryII}.    

In this paper however, we will focus on the generalised first order calculus on metric measure spaces.
First approaches were made by Cheeger \cite{cheeger1999differentiability} to generalise Rademacher's theorem to PI-spaces, Haj\l{}asz \cite{hajlasz1996sobolev} to prove generalised Sobolev embedding theorems, and Shanmugalingam \cite{shanmugalingam2000newtonian}. 
Later, Ambrosio, Gigli and Savaré introduced a Sobolev calculus based on the notion of test plans in \cite{ambrosio2014inventio} that they proved to be equivalent to the approaches in \cite{cheeger1999differentiability} and \cite{shanmugalingam2000newtonian}, leading to the definition of the Sobolev space $W^{1,2}(X)$, and a Laplace operator. 

In \cite{gigli2015differential}, Gigli studied conditions under which the Laplacian is linear and proposed the notion of \textit{infinitesimal Hilbertianity}. A metric measure space is infinitesimally Hilbertian if the associated Sobolev space $W^{1,2}(X)$ is a Hilbert space, or equivalently if the Cheeger energy (a generalisation of the Dirichlet energy to metric measure spaces) is a quadratic form. Then the Laplacian is automatically linear and the theory on Dirichlet forms provides various tools to better understand the Laplacian and the heat flow.   
 
Examples of infinitesimally Hilbertian metric measure spaces include geodesically complete weighted Riemannian manifolds \cite{luvcic2020infinitesimal}, locally $\mathrm{CAT}(\kappa)$-spaces equipped with any Borel measure  \cite{di2021infinitesimal}, and sub-Riemannian manifolds equipped with any Borel measure \cite{le2023universal}.
Relevant notions to study the Cheeger energy are the metric speed of absolutely continuous curves (\cite{ambrosio1990metric}, \cite{kirchheim1994rectifiable}) and the slope of Lipschitz continuous functions. 

In the context of curvature dimension conditions the first order calculus on metric measure spaces turned out to be a crucial ingredient in the search for a criterion that distinguishes Riemannian-like structures from Finslerian structures.
In \cite{ambrosio2014duke}, Ambrosio, Gigli, and Savaré proposed the Riemannian curvature dimension condition, $\rcd(K, \infty)$, a refinement of the curvature dimension condition which combines the $\cd(K, \infty)$-condition with infinitesimal Hilbertianity.
The $\rcd(K, N)$-condition for $N<\infty$ was proposed by Gigli in \cite{gigli2014overview}. 
Using this terminology, the metric measure space $(\R^n, \norm{\cdot}, \mathcal{L}^n)$ is an $\rcd(0, n)$-space if and only if $\norm{\cdot}$ arises from an inner product, hence in that case infinitesimal Hilbertianity indeed rules out Riemannian structures among Finslerian ones.

A smooth $d$-manifold $M$, equipped with a continuous Riemannian metric $g$ can naturally be seen as a metric measure space, where the measure is induced by the volume form (locally given via $\di\vol_g= \sqrt{\det g}\, \di \mathcal{L}^d$) and the Riemannian distance $\sfd_g$ is defined as the infimum over lengths of piecewise smooth curves (see \eqref{def:dg}).

However, there are several relevant classes of Riemannian metrics that are non-continuous, for instance the Geroch-Traschen class \cite{geroch1987strings}, i.e., those Riemannian metrics $g$ such that $g, g^{-1} \in L^\infty_{\rm loc}(M)$ and $g \in W^{1,2}_{\rm loc}(M)$. These regularity assumptions are minimal to compute the distributional Ricci curvature on $M$ (see \cite{grosser2013geometric}, \cite{graf2020singularity}) and have been studied in both Riemannian (see for example \cite{grant2011synthetic}) and Lorentzian signature (see \cite{steinbauer2009geroch}). Moreover, in the context of potential theory for uniformly elliptic operators, the even broader class of (Riemannian) metrics $g$ such that $g, g^{-1} \in L^\infty_{\rm loc}(M)$ has been studied in \cite{saloff1992uniformly}, \cite{sturm1995analysis}, \cite{norris1997heat} and \cite{de1991integral}. 

If the metric tensor is only measurable, more precisely an equivalence class of metric tensors that is defined up to local $\mathcal{L}^d$-almost everywhere equality, Norris (cf. \cite{norris1997heat}) and De Cecco-Palmieri (cf. \cite{de1991integral}) defined a distance $\sfd_g$ that avoids the difficulty that piecewise smooth curves $\gamma$ might lie on a null set where $g$ is not well-defined (see Section \ref{sec:3}). 

This paper is driven by the following questions, which arise naturally from the results on smooth Riemannian manifolds: 
\begin{question}[see also \cite{de1990conversazioni}]\label{q1}
    Under which regularity assumptions on $g$ does $(M, \sfd_g, g)$ satisfy that for any Lipschitz continuous function $f:M \to \R$ its slope equals the norm of its gradient almost everywhere, i.e., $|Df|(x)=\limsup_{y \to x} \frac{|f(x)-f(y)|}{\sfd_g(x,y)}= |\nabla_g f|_g(x)$ almost everywhere?
\end{question}
This question has been posed by De Giorgi in \cite{de1990conversazioni} as the positive answer would be a necessary condition for  $(M, \sfd_g, g)$ to be a quasi-Riemannian manifold, as defined in \cite{de1990conversazioni}, see also Definition \ref{def:quasi_riemannian_manifold}.  A related question:
\begin{question}\label{q2}
   Under which regularity assumptions on $g$ does the metric speed of ``almost every" absolutely continuous curve equal the norm of its derivative, i.e.,  $\lim_{h \to 0} \frac{\sfd_g(\gamma_t, \gamma_{t+h})}{|h|}=|\dot{\gamma}_t|_g$ for almost every $t \in [0,1]$?
\end{question}
What exactly ``almost every" curve refers to is clarified in Section \ref{sec:2}. 
\begin{question}\label{q3}
   Under which regularity assumptions on $g$ is $(M, \sfd_g, \vol_g)$ infinitesimally Hilbertian?
\end{question}
This particular question refers back to the original purpose of infinitesimal Hilbertianity to identify the Riemannian-like structures among metric measure spaces. 

It is already known that assuming $g \in C^0$ is sufficient for the second and third question to have positive answers (see \cite{burtscher2012length}, \cite{mondino2025equivalence}). Moreover, it is known that for $g \in C^0$ and $f \in C^1$, the slope $|Df|$ of $f$ equals the norm of its gradient \cite{mondino2025equivalence}.
In \cite{sturm1997diffusion}, Sturm establishes that there exist cases, where the answer to Questions \ref{q1} and \ref{q2} is negative if one does not assume the metric $g$ to be continuous. 
Moreover, in \cite{de1995lip}, De Cecco and Palmieri present one example of a metric $g$ on $\R^2$ with $g, g^{-1} \in L^\infty$ ans such that on a positive measure set of points, the tangent is a normed space, with a norm that does not arise from an inner product. 

Assuming $W^{1,p}_{\rm loc}$-Sobolev regularity with growing exponent $p$ can be seen as a bridge from $L^\infty_{\rm loc}$-metrics to $C^0$-metrics.
Indeed, for $p<p'$, it holds $W^{1,p'}_{\rm loc} \subset W^{1,p}_{\rm loc}$ and by the Sobolev embedding theorem, a Riemannian metric $g$ on a manifold $M$ with $g \in W^{1,p}_{\rm loc}$, if $p > \dim M$, is automatically continuous. This suggests that additional Sobolev regularity could be a discriminant for Questions \ref{q1} to \ref{q3}.

In this paper, we discuss a related example to \cite{de1995lip} that facilitates the study of infinitesimal Hilbertianity, and that allows a refinement to demonstrate that the above questions still may have negative answers if we additionally assume Sobolev regularity of the metric tensor. 

\textbf{Outline of the paper and main results.}
In Section \ref{sec:2}, we will recall some basic notions on the first order calculus on metric measure spaces and their explicit descriptions in the case of a manifold equipped with a continuous Riemannian metric. In Section \ref{sec:3}, we recall the definition of the distance in the case of a non-continuous Riemannian metric by Norris and De Cecco-Palmieri, and briefly study absolutely continuous curves and test plans in that setting. 
In Section \ref{sec:4}, we will construct Riemannian metrics $g$ with $g, g^{-1} \in L^\infty_{\rm loc}$ such that $(M, \sfd_g, g)$ is not a quasi-Riemannian manifold and such that $(M, \sfd_g, \vol_g)$ is not infinitesimally Hilbertian. We start with metrics that only satisfy $g, g^{-1} \in L^\infty_{\rm loc}$, providing an introductory example of a non-quasi Riemannian manifold in Subsection \ref{subsec:first_counter_quasi_riem} and an introductory example of a non-infinitesimally Hilbertian space in Subsection \ref{subsec:first_counter_inf_hilbertian}. These examples are related to the one in \cite{de1995lip}.
In Subsection \ref{subsec:sobolev}, we refine the previous examples to a Sobolev metric $g$ such that $g, g^{-1} \in W^{1,p}_{\rm loc}\cap L^\infty_{\rm loc}$ for some $p \leq \dim M -1$. 
Questions \ref{q1} and \ref{q2} are then addressed in Subsection \ref{subsub_quasi_riem}, where we prove:
\smallskip

\begin{maintheorem}\label{snd_counterexample}
    For $d \geq 3$, $p \in [1, d-1]$ there exists a $d$-dimensional manifold $M$ and a Riemannian metric $g$ on $M$ such that $g, g^{-1} \in L^\infty$, $g \in W^{1,p}_{\rm loc}(M)$ and such that 
    \begin{itemize}
        \item[(i)] There exists a test plan that gives positive measure to curves for which the metric speed is strictly smaller than the norm of the derivative for almost every time $t \in [0,1]$. 
        \item[(ii)] There exists a function $f \in C^1(M)$ for which the minimal weak upper gradient and the metric slope are strictly larger than the norm of the gradient on a set of positive measure, i.e. $|Df|, |Df|_w > |\nabla_g f|_g$ on a set of positive $\vol_g$-measure.
    \end{itemize}
\end{maintheorem}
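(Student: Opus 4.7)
The strategy is to lift the $L^\infty$-only counterexamples of Subsections~\ref{subsec:first_counter_quasi_riem} and \ref{subsec:first_counter_inf_hilbertian} to Sobolev regularity by concentrating all of the non-smoothness of $g$ onto a singular set $\Sigma$ of codimension $d-1$, i.e.\ a one-dimensional submanifold. The hypothesis $d \geq 3$ enters here so that $d-1 \geq 2$ and $\Sigma$ truly has codimension at least two: a jump in $g$ across a codimension-one set is incompatible with any $W^{1,p}$ regularity, $p \geq 1$, whereas a codimension-$(d-1)$ singularity with $|\nabla g| \sim 1/r$ is borderline $W^{1,d-1}$.

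Concretely, I would work on a chart modelled on $\R^d$, take $\Sigma$ to be a one-dimensional submanifold (either a single line or a locally finite union of parallel lines, tuned so that shortcut effects accumulate on a positive-measure tubular neighbourhood), and define $g$ in transverse polar coordinates $(r,\theta) \in (0, \infty) \times S^{d-2}$ about $\Sigma$ as a bounded, uniformly elliptic, matrix-valued function whose slicewise transverse restriction reproduces the geometric ingredient driving the earlier $L^\infty$-counterexamples. Uniform ellipticity immediately delivers $g, g^{-1} \in L^\infty$. The angular derivatives then satisfy $|\nabla g| \lesssim 1/r$, and since the transverse volume element is $r^{d-2}\,dr\,d\theta$, a direct computation yields
\[
\int_{B_R} |\nabla g|^p\,dx \lesssim \int_0^R r^{d-2-p}\,dr,
\]
which is finite precisely when $p < d-1$, giving $g \in W^{1,p}_{\mathrm{loc}}(M)$ for all $p \in [1, d-1)$.

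For (i), I would build a test plan supported on a positive-measure family of curves $\gamma$ that enter a tubular neighbourhood of $\Sigma$ and exploit angular sectors along which $g$ is ``smaller'': along such a curve, $|\dot\gamma|_g$ is dictated by the pointwise value of $g$ in the sector occupied by $\gamma$, whereas the metric speed, computed via the Norris/De Cecco-Palmieri distance $\sfd_g$ recalled in Section~\ref{sec:3}, incorporates the length of shortcut paths through the smaller sectors. A computation analogous to that in Subsection~\ref{subsec:first_counter_quasi_riem} then shows that the metric speed is strictly smaller than $|\dot\gamma|_g$ at a.e.\ $t$ along a positive-measure family of such curves. For (ii), I would take $f(x) = x_1$ (a $C^1$ coordinate aligned with $\Sigma$), compute $|\nabla_g f|_g(x) = \sqrt{g^{11}(x)}$ directly from the construction, and show that on a positive-measure tubular neighbourhood of $\Sigma$ both $|Df|$ and $|Df|_w$ strictly exceed $|\nabla_g f|_g$: the slope via a direct shortcut estimate using $\sfd_g$, and the minimal weak upper gradient via the test-plan characterisation recalled in Section~\ref{sec:2} applied to the plan constructed for (i).

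The principal obstacle is to ensure that the shortcut discrepancy — which at first sight is naturally supported on $\Sigma$ itself, a null set — genuinely propagates to a positive-measure subset of $M$ while remaining compatible with the Sobolev bound $p < d-1$. This forces a delicate calibration between the density of singular curves (so that shortcuts reach arbitrarily close to almost every point of a positive-measure set), the rate at which tubular radii are allowed to shrink, and the Sobolev integrability of $|\nabla g|^p$. The sharpness of the exponent $p = d-1$ reflects this balance: for $p \geq d-1$, a Sobolev embedding on transverse $(d-1)$-dimensional slices would force $g$ to admit a continuous representative, and the $C^0$-results of \cite{burtscher2012length, mondino2025equivalence} recalled in the introduction would immediately rule out both statements.
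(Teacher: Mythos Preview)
Your identification of the Sobolev threshold $p<d-1$ via the transverse integral $\int_0^R r^{d-2-p}\,dr$ is correct, and you rightly flag propagation of the shortcut discrepancy to a set of positive measure as the crux. But the specific construction you propose cannot deliver this. If $\Sigma$ is a single line, or a \emph{locally finite} union of lines, with $g$ depending on the transverse angle, then $g$ is continuous on $M\setminus\Sigma$; since $\Sigma$ is $\vol_g$-null, the $C^0$ results of \cite{burtscher2012length,mondino2025equivalence} apply at a.e.\ point and force $|\dot\gamma_t|=|\dot\gamma_t|_g$ for $\boldsymbol\pi$-a.e.\ curve and a.e.\ $t$, as well as $|Df|=|Df|_w=|\nabla_g f|_g$ a.e. Concretely: at a point $y$ at transverse distance $|y'|>0$ from the nearest line, any detour through a ``cheap'' angular sector costs at least $c|y'|$, so $\sfd_g(y,y+he_1)/h\to |e_1|_{g(y)}$ as $h\to 0$ and there is no discrepancy. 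Your phrase ``locally finite\ldots\ tuned so that shortcut effects accumulate on a positive-measure tubular neighbourhood'' is internally contradictory: local finiteness forces a positive distance from a.e.\ point to the nearest cheap region, which kills accumulation.

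The paper's construction resolves exactly this tension, and it is not the one you describe. There is no angular variation and no one-dimensional $\Sigma$. Instead one works in the transverse $\R^{d-1}$ factor and places a \emph{dense} (not locally finite) family of smooth radial bumps: at each dyadic point $x\in D_m\subset 2^{-(m+1)}\Z^{d-1}$ one sets $\theta=1$ on a ball $B_{r_m/10}(x)$ and $\theta=2$ outside $B_{r_m/5}(x)$, with the radii $r_m$ chosen so small that $\sum_m 2^{m(d-1)}r_m^{d-1-p}<\infty$ (giving $\theta\in W^{1,p}$) and $\sum_m 2^{m(d-1)}r_m^{d-1}\ll 1$ (so the complement $S\subset(0,1)^{d-1}$ of all closed balls has positive measure). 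The key combinatorial step (Lemma~\ref{D_m_dense}) is that the centers $\bigcup_m D_m$ are dense in $S$; hence at every $y\in S\times\R$ one has $\theta(y')=2$ but arbitrarily nearby balls with $\theta=1$, producing the shortcut $\sfd_g(y,y+he_d)=h<\sqrt{2}\,h=|he_d|_g$ on the full-measure set $S\times\R$. The metric is then $g(x',x_d)=\theta(x')\,Id$, the test plan pushes Lebesgue measure on $E=S\times(0,\tfrac12)$ along $t\mapsto y+te_d$, and the witnessing function is $f(x)=x_d$. The point you are missing is that to get a positive-measure discrepancy one must make $g$ genuinely discontinuous on a positive-measure set, which forces an \emph{infinite}, carefully summable family of bumps rather than a locally finite singular locus.
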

Question \ref{q3} is addressed in Subsection \ref{subsub:hilbertian}, where we prove:
\smallskip

\begin{maintheorem}\label{hilbertianity_counterexample}
    For $d\geq 3$, $p\in [1, d-1]$ there exists a $d$-dimensional manifold $(M, g)$ with $g, g^{-1} \in L^\infty_{\rm loc}$ and $g \in W^{1,p}_{\rm loc}(M)$ such that the metric measure space $(M, \sfd_g, \vol_g)$ is not infinitesimally Hilbertian.
\end{maintheorem}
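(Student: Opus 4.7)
The plan is to use the Riemannian metric $g$ constructed in Subsection~\ref{subsec:sobolev} together with the standard characterization that $(M, \sfd_g, \vol_g)$ is infinitesimally Hilbertian if and only if the Cheeger energy
\[
\Ch(f) = \tfrac{1}{2}\int_M |Df|_w^2\,\di\vol_g
\]
is a quadratic form on $W^{1,2}(M,\sfd_g,\vol_g)$. Equivalently, it fails to be infinitesimally Hilbertian whenever one can exhibit functions $f_1,f_2$ for which the pointwise parallelogram identity
\[
|D(f_1+f_2)|_w^2 + |D(f_1-f_2)|_w^2 = 2\,|Df_1|_w^2 + 2\,|Df_2|_w^2
\]
fails on a set of positive $\vol_g$-measure. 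This is the target.

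First, I would reuse the construction from Subsection~\ref{subsec:sobolev} that gives Theorem~\ref{snd_counterexample}. There, $g$ is engineered so that on a set $A \subset M$ of positive $\vol_g$-measure, the distance $\sfd_g$ is strictly smaller than what $g$ would naively suggest because ``zigzag'' curves exploit a directional anisotropy of $g$. The outcome is that on $A$ there is a well-defined effective seminorm $\|\cdot\|^*_x$ on $T_x M$ characterized by $|Df|_w(x) = \|\nabla_g f(x)\|^*_x$ for $f \in C^1(M)$ with $\nabla_g f(x)$ in a fixed open cone of directions; by the mechanism of the construction (certain directions are cheap via zigzag, others not), $\|\cdot\|^*_x$ is not induced by any inner product on $T_x M$.

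Next, in a small chart around a point $x_0 \in A$ of Lebesgue density one, I would take two smooth cut-off functions $f_1, f_2$ that are affine in the chart near $x_0$, chosen so that their constant gradients $v_1, v_2$ violate the parallelogram identity for the norm $\|\cdot\|^*_{x_0}$. Such $v_1, v_2$ exist because any norm not induced by an inner product admits two vectors violating the identity. Applying Theorem~\ref{snd_counterexample}(ii) to the four functions $f_1, f_2, f_1+f_2, f_1-f_2$, together with an approximate continuity argument for the map $x \mapsto \|\cdot\|^*_x$ on $A$, propagates the pointwise violation from $x_0$ to a set of positive measure. Integrating against $\vol_g$ then yields the failure of the parallelogram identity for $\Ch$ and hence the failure of infinitesimal Hilbertianity.

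The main obstacle is the pointwise identification $|Df|_w(x) = \|\nabla_g f(x)\|^*_x$ on $A$, which requires both an upper bound coming from efficient zigzag curves (producing upper gradients that realize $\|\cdot\|^*_x$) and a matching lower bound coming from test plans extending those of Theorem~\ref{snd_counterexample}(i), whose action on $C^1$ functions detects the anisotropy in enough directions. Once this local Finsler-norm structure is pinned down, the violation of the parallelogram identity on $A$ is a purely algebraic consequence of $\|\cdot\|^*_x$ not being Euclidean, and it passes to $\Ch$ through integration.
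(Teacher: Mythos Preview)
Your high-level strategy matches the paper's: both establish that on a set $A$ of positive measure the minimal weak upper gradient of $C^1$ functions is governed by a genuinely non-Euclidean norm on (co)tangent vectors, and then violate the parallelogram identity for the Cheeger energy. The paper carries this out concretely: it computes the effective speed norm $\alpha(u)$ on $S\times\R$ via a competitor-curve argument (the inductive $\gamma^{(l)}$ straightening together with the one-variable minimisation of $f_b(c)$), proves the matching upper bound for $\sfd_g$ by explicit approximating paths, and then identifies $|Df_i|_w$ for four specific affine-type functions using the same test-plan technique as in Proposition~\ref{minimal_weak_upper_gradient_ex1}. With $G_1=\tfrac{1}{\sqrt{2}}$ and $G_2=G_3=G_4=1$ on $S\times\R$ the parallelogram defect is computed to be $1$, and a cut-off argument in the $x_d$-variable produces $W^{1,2}$ functions on which $\Ch$ fails to be quadratic.

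The gap in your proposal is precisely the step you flag as the ``main obstacle'': you never actually establish the two-sided identification $|Df|_w(x)=\|\nabla_g f(x)\|^*_x$ on $A$. Invoking Theorem~\ref{snd_counterexample}(ii) does not do this: that result only exhibits \emph{one} function and \emph{one} direction for which $|Df|_w>|\nabla_g f|_g$; it gives neither the exact value of $|Df|_w$ nor information in the three further directions $v_2,\,v_1+v_2,\,v_1-v_2$ needed for the parallelogram test. The paper's analysis (the lower bound \eqref{asymptotic_lower_bound_metric_ex2} via the $\gamma^{(l)}$ construction and the upper bound \eqref{upper_bpund_metric_ex2}) is exactly the missing ingredient, and it is the bulk of the work. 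Your appeal to ``approximate continuity of $x\mapsto\|\cdot\|^*_x$'' is also not needed in the paper's setting---on $S\times\R$ the effective norm $\alpha$ is in fact constant---but absent the explicit computation you have no description of $\|\cdot\|^*_x$ at even a single point, so there is nothing to propagate. In short: the outline is sound, but without the explicit metric-speed computation and the corresponding test-plan lower bounds, the proof is incomplete.
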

\subsection*{Notation} 
Given a manifold $M$, a Riemannian metric $g$, and a locally Lipschitz continuous function $f:M \to \R$, $|Df|$ will always denote its slope, $|Df|_w$ its minimal weak upper gradient, $\nabla_gf$ its gradient with respect to $g$ and $\nabla_{euc} f$ its differential with respect to chosen local coordinates (which will be specified). \\
Given an absolutely continuous curve $\gamma:[0,1] \to M$, we will for almost every $t \in [0,1]$ denote its metric speed at time $t$ by $|\dot{\gamma}_t|$, and its derivative at time $t$ by $\dot{\gamma}_t \in T_{\gamma_t}M$. To avoid confusion, if we take the norm of the derivative, we will always write $|\dot{\gamma}_t|_g$ for $\sqrt{g(\dot{\gamma}_t,\dot{\gamma}_t)}$ and $|\dot{\gamma}_t|_{euc}$ for $\sqrt{\langle\dot{\gamma}_t,\dot{\gamma}_t\rangle_{euc}}$ (given specified local coordinates). \\
When working in $\R^d$, $e_i$ will denote the $i$-th unit vector, $\mathcal{L}^d$ the $d$-dim Lebesgue measure, and $\mathrm{Id}$ will denote the identity matrix. For $i \in \{1, \ldots, d\}$ we will denote by $x_i: \R^d \to \R$ the projection on the $i$-th coordinate. \\
For any measure space $(X, \mathfrak{m})$ and an $\mathfrak{m}$-measurable set $A \subset X$, $\mathbbm{1}_{A}$ will denote the characteristic function of $A$. 
\section{Basic definitions on metric measure spaces}\label{sec:2}
In this section we will recall notions from first order calculus on metric measure spaces and their identification in metric measure spaces that arise from manifolds equipped with an at least continuous Riemannian metric. 
\subsection{First order calculus on metric measure spaces}\label{subsec:2.1}
We will briefly summarise some tools to define a generalised notion of modulus of gradients and Sobolev functions in metric measure spaces that was introduced by Cheeger in \cite{cheeger1999differentiability} and further analysed by Ambrosio, Gigli and Savaré  \cite{ambrosio2014inventio}. The structure of this section has been inspired by \cite{mondino2025equivalence}.

Let $(X, \sfd)$ be a complete and separable metric space. The slope (or local Lipschitz constant) of a real valued function $f: X \to \R$ is defined by
\begin{align*}
    |Df|(x) := \limsup_{y \to x}\frac{|f(x)-f(y)|}{\sfd(x, y)},
\end{align*}
if $\{x\}$ is not isolated, and $0$ otherwise.

We endow $(X,\sfd)$ with a non-negative $\sigma$-finite Borel  measure $\mathfrak{m}$, obtaining the metric measure space $(X, \sfd, \mathfrak{m})$.
Throughout the rest of this work, we assume that there exists a bounded Borel Lipschitz map $V: X \to [0, \infty)$ such that  (cf.\@ \cite[Section\,4]{ambrosio2014inventio})
\begin{equation}\label{condition_on_measure_bounded_compression}
\begin{split}
    V\ \mathrm{is\ bounded\ on\ each\ compact\ set\ } K \subset X \ \mathrm{and}  \\
    \int_X e^{-V^2} \, \di\mathfrak{m} \leq 1. 
    \end{split}
\end{equation}
\begin{definition}
    Let $\gamma \in C([0,1],X)$ be a curve. Then we say that $\gamma$ is absolutely continuous if there exists a function $g \in L^1([0,1], [0, \infty))$ such that for all $0 \leq s<t \leq 1$, it holds
    \begin{align*}
        \sfd(\gamma_s, \gamma_t) \leq \int_s^t g(r) \, \di r.
    \end{align*}
    There exists a minimal such function $g$, which we call the metric speed of $\gamma$ and denote it by $|\dot{\gamma}_t|$. 
\end{definition}
The metric speed of an absolutely continuous curve $\gamma:[0,1] \to X$ is given by  $|\dot{\gamma}_t|:= \lim_{h \to 0}\frac{\sfd(\gamma_{t+h}, \gamma_t)}{|h|}$ {which exists for almost every $t \in [0,1]$} \cite{ambrosio1990metric}, (see also \cite{kirchheim1994rectifiable}).

We next recall the notion of test plan and weak upper gradient. We will use the conventions of  \cite{gigli2018nonsmooth}, after \cite{ambrosio2014inventio}. For any metric space $(Y, \sfd_Y)$, we denote by $\mathcal{P}(Y)$ the set of Borel probability measures on $Y$. Given two topological spaces $X_1, X_2$, a Borel-measurable map $T:X_1 \to X_2$, and a Borel measure $\mu$ on $X_1$, the pushforward measure $T_\#\mu$ on $X_2$ is the Borel measure defined by $T_\#\mu(B) = \mu (T^{-1}(B))$ for any Borel set $B \subset X_2$.

For $t \in [0,1]$ the evaluation map $e_t: C([0,1], X) \to X$ is defined via $e_t(\gamma) = \gamma_t$. 
\smallskip

\begin{definition}\label{test_plan}
    Let $\boldsymbol{\pi} \in \mathcal{P}(C([0, 1], X))$. We say that $\boldsymbol{\pi}$ is a \emph{test plan} if there exists a constant $C(\boldsymbol{\pi})>0$ such that 
    \begin{align*}
        (e_t)_\#\boldsymbol{\pi} \leq C(\boldsymbol{\pi})\mathfrak{m}\ \mathrm{for \ all \ } t \in [0,1]
    \end{align*}
    and 
    \begin{align*}
        \int \int_0^1 |\dot{\gamma}_t|^2\,\di t\di\boldsymbol{\pi}(\gamma) < \infty. 
    \end{align*}
    We use the convention that if $\gamma$ is not absolutely continuous, then $\int_0^1 |\dot{\gamma}_t|^2\,\di t =\infty$.
\end{definition} 
In Question \ref{q2}, ``almost every absolutely continuous curve" refers to a set of curves $\Gamma \subset C([0,1], M)$ such that for every test plan $\boldsymbol{\pi}$ as defined above, $\boldsymbol{\pi}(\Gamma)=1$.

In the following two definitions, we say $\mm$-measurable functions to refer to equivalence classes of $\mm$-measurable funcrtions up to $\mm$-a.e. equality. 
\smallskip

\begin{definition}
   Given { $f: X \to \R$ } a $\mathfrak{m}$-measurable function, then a $\mathfrak{m}$-measurable function $G: X \to [0, \infty]$ is called a weak upper gradient of $f$, if 
   \begin{align}\label{weakgradient}
       \int|f(\gamma_1)-f(\gamma_0)|\,\di\boldsymbol{\pi}(\gamma)\,  \leq \int\int_0^1G(\gamma_t)|\dot{\gamma}_t|\, \di t\di\boldsymbol{\pi}(\gamma) < \infty, \quad \mathrm{for \ all \ test \ plans } \ \boldsymbol{\pi}. 
   \end{align}
\end{definition}
The discussion in \cite[Proposition 5.9 and Definition 5.11]{ambrosio2014inventio}, shows the existence of a weak upper gradient $|Df|_w$ such that $|Df|_w \leq G$ {$\mathfrak{m}$-a.e.} for all other weak upper gradients $G$. We will call it the \textit{minimal weak upper gradient} of $f$.
\begin{definition}
    The Cheeger energy is defined in the class of $\mathfrak{m}$-measurable functions by
    \begin{align*}
        \Ch(f):= \left\{ \begin{array}{ll}
           \frac{1}{2}\int_X |Df|^2_w\, \di\mathfrak{m}  & \mathrm{if}\ f\ \mathrm{has\ a\ weak\ upper\ gradient\ in }\ L^2(X, \mathfrak{m}),  \\
           \infty  & \mathrm{otherwise},
        \end{array}\right.
    \end{align*}
    with proper domain $D(\Ch)= \{f:X \to [0, \infty], \mathfrak{m}\mathrm{-measurable}, \ \Ch(f) < \infty\}$.
\end{definition}
\begin{definition}
    A metric measure space is called \emph{infinitesimally Hilbertian} if  $\Ch$ is a quadratic form.
\end{definition}
\subsection{Manifolds with continuous Riemannian metrics}\label{subsec:2.2}
Consider a $d$-dimensional manifold $M$ equipped with a Riemannian metric $g \in C^{0}(M)$. Then $g$ induces a volume measure whose density is locally given by $\di\vol_g = \sqrt{\det g}\, \di \mathcal{L}^d$. 
The distance induced by $g$ is given by 
\begin{align}\label{def:dg}
    \sfd_g(x, y) = \inf\Big\{ \int_{0}^1 |\dot{\gamma_t}|_g\, \di t: \, \gamma\ \mathrm{piecewise \ }C^\infty, \gamma_0=x, \gamma_1 = y \Big\}.
\end{align}
In \cite{burtscher2012length} the metric structure for such spaces $(M, \sfd_g)$ has been thoroughly studied. In \cite{mondino2025equivalence}, the Cheeger energy and related notions were studied and expressed in terms of classical Sobolev spaces on manifolds (cf. \cite{hebey1996sobolev}). We will recall some of these results to point out that in the case of a Riemannian manifold with a continuous metric, several notions from Subsection \ref{subsec:2.1} can be identified with expected classical objects from calculus on manifolds. 
\smallskip

\begin{proposition}[\cite{burtscher2012length} Proposition.\;4.1 and Theorem\;3.15]\label{length_space}
    The metric \eqref{def:dg} turns $M$ into a length space.
\end{proposition}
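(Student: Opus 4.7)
The plan is to show that the infimum defining $\sfd_g(x,y)$ in \eqref{def:dg} coincides with the infimum of the metric length $L_{\sfd_g}(\gamma) := \sup_{\tau} \sum_i \sfd_g(\gamma_{t_{i-1}}, \gamma_{t_i})$ (supremum over partitions $\tau = \{0 = t_0 < \ldots < t_N = 1\}$) taken over all continuous curves from $x$ to $y$. The argument splits into two steps: (a) for any piecewise $C^\infty$ curve $\gamma$, the metric length $L_{\sfd_g}(\gamma)$ equals the Riemannian length $L_g(\gamma) = \int_0^1 |\dot\gamma_t|_g \, \di t$; (b) piecewise smooth quasi-minimizers in \eqref{def:dg} realize the infimum up to arbitrary precision, while any continuous curve has metric length at least $\sfd_g$ of its endpoints by telescoping the triangle inequality.

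A preliminary observation is that $\sfd_g$ is a metric compatible with the manifold topology. Given $p \in M$, choose a chart around $p$ in which $g(p) = Id$. Continuity of $g$ then provides, for every $\epsilon > 0$, a neighborhood $U$ of $p$ on which $(1-\epsilon)\langle \cdot, \cdot\rangle_{euc} \leq g \leq (1+\epsilon)\langle \cdot,\cdot \rangle_{euc}$ as bilinear forms. Integrating along piecewise smooth curves transfers this to a bi-Lipschitz comparison $(1-\epsilon)d_{euc} \leq \sfd_g \leq (1+\epsilon)d_{euc}$ between the Riemannian and Euclidean distances on small neighborhoods, yielding both positivity of $\sfd_g$ off the diagonal and topological compatibility.

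The substantive step is (a). One inequality $L_{\sfd_g}(\gamma) \leq L_g(\gamma)$ is immediate, since $\sfd_g(\gamma_{t_{i-1}}, \gamma_{t_i}) \leq \int_{t_{i-1}}^{t_i} |\dot\gamma_s|_g \, \di s$ by definition of $\sfd_g$, and summing over any partition gives the bound. The reverse $L_g(\gamma) \leq L_{\sfd_g}(\gamma)$ is the main obstacle and is where the continuity of $g$ is essential. Fixing $\epsilon > 0$, cover the compact image $\gamma([0,1])$ by finitely many charts on each of which the pinching from the preceding paragraph holds. Refine the partition so that each sub-arc $\gamma|_{[t_{i-1}, t_i]}$ lies inside one such chart and, moreover, is short enough that its Euclidean length $L_{euc}(\gamma|_{[t_{i-1}, t_i]})$ is within a factor $(1+\epsilon)$ of the chord length $d_{euc}(\gamma_{t_{i-1}}, \gamma_{t_i})$; this is possible because $\gamma$ is piecewise smooth, so on short pieces the arc length approaches the chord length. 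Combining $\sfd_g \geq (1-\epsilon)d_{euc}$ with $L_g \leq (1+\epsilon) L_{euc} \leq (1+\epsilon)^2 d_{euc}$ on each piece, summing over $i$, and letting $\epsilon \to 0$ yields the desired inequality.

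With (a) in hand, (b) is immediate: by \eqref{def:dg}, for any $\epsilon > 0$ there is a piecewise smooth $\gamma$ from $x$ to $y$ with $L_g(\gamma) < \sfd_g(x,y) + \epsilon$, and step (a) gives $L_{\sfd_g}(\gamma) = L_g(\gamma) < \sfd_g(x,y) + \epsilon$; conversely, any continuous curve $\sigma$ satisfies $L_{\sfd_g}(\sigma) \geq \sfd_g(x,y)$ by telescoping the triangle inequality on any partition. Hence $(M, \sfd_g)$ is a length space. I emphasize that the obstacle is the local bi-Lipschitz comparison $\sfd_g \approx d_{euc}$ in charts: this is precisely the feature that fails once $g$ is only $L^\infty_{\rm loc}$, foreshadowing the counterexamples built in Section \ref{sec:4}.
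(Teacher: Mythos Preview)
The paper does not supply its own proof of this proposition; it simply records the statement and cites \cite{burtscher2012length}. Your argument follows the standard route and is correct for the stated conclusion, but two remarks are worth making.

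First, the reverse inequality $L_g(\gamma) \leq L_{\sfd_g}(\gamma)$ in step (a) is not actually needed for the length-space property: in step (b) you only invoke $L_{\sfd_g}(\gamma) \leq L_g(\gamma)$, which is the immediate direction. This matters because your argument for the reverse inequality has a small gap. The claim that Euclidean arc length is within a factor $(1+\epsilon)$ of chord length on sufficiently short sub-arcs can fail near points where $\dot\gamma_t = 0$: for the piecewise smooth curve $\gamma(t) = (t^3, t^3)$ for $t \le 0$ and $\gamma(t) = (t^3, -t^3)$ for $t \ge 0$, the arc-to-chord ratio on $[-h,h]$ equals $\sqrt{2}$ for every $h>0$, so no partition refinement repairs it. The fix is routine (reparametrize each smooth piece by arc length, or excise the closed zero set of $\dot\gamma$, which contributes nothing to $L_g$), but since this inequality is superfluous for the proposition you may simply drop it.

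Second, a small sharpening of your closing remark: a bi-Lipschitz comparison $\sfd_g \asymp \sfd_{euc}$ with \emph{some} fixed constant does persist when $g, g^{-1} \in L^\infty_{\rm loc}$; what genuinely breaks down is the $(1 \pm \epsilon)$-comparison for arbitrarily small $\epsilon$, i.e., the infinitesimal isometry with the Euclidean structure in a chart. That is indeed the mechanism exploited in Section~\ref{sec:4}.
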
 
 A standard fact on the slope of functions is that if $f \in C^1(M)$, then  $|Df|(x) = |\nabla_g f|_g(x)$ for every $x$.
\smallskip
 
\begin{proposition}[\cite{burtscher2012length}, Proposition 4.10]\label{metric_speed_explicit}
    Let $\gamma:[0, 1]\to M$ be an absolutely continuous curve. Then the metric speed $|\dot{\gamma_t}|$ coincides a.e.\;with $|\dot{\gamma}_t|_g = \sqrt{\langle  \dot{\gamma}_t, \dot{\gamma}_t\rangle_g}$, where $\dot{\gamma}_t$ denotes the (a.e.\;existing) derivative. 
\end{proposition}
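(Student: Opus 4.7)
The plan is to establish the two inequalities $|\dot{\gamma}_t| \le |\dot{\gamma}_t|_g$ and $|\dot{\gamma}_t| \ge |\dot{\gamma}_t|_g$ for a.e.\;$t \in [0,1]$ separately. The first is the easy direction, since $\gamma$ itself (after a smooth approximation) is a competitor in the infimum defining $\sfd_g$; the second is where the continuity assumption on $g$ enters essentially, via a local freezing of the metric.

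For the upper bound, I would first argue that for any absolutely continuous $\gamma$ and any $0 \le s < t \le 1$ one has $\sfd_g(\gamma_s, \gamma_t) \le \int_s^t |\dot{\gamma}_r|_g \, \di r$. Since $\sfd_g$ is defined as an infimum over piecewise $C^\infty$ curves, this requires approximating $\gamma|_{[s,t]}$ by piecewise smooth curves with the same endpoints whose $g$-length converges to $\int_s^t |\dot{\gamma}_r|_g\,\di r$; this can be done inside a relatively compact chart by mollification, using the uniform continuity of $g$ on the image of $\gamma$ to transfer the Euclidean length convergence into $g$-length convergence. The Lebesgue differentiation theorem applied to the $L^1$ function $r \mapsto |\dot{\gamma}_r|_g$ then gives
\[
\limsup_{h \to 0} \frac{\sfd_g(\gamma_{t+h},\gamma_t)}{|h|} \;\le\; |\dot{\gamma}_t|_g \qquad \text{for a.e.\;}t,
\]
and in particular $|\dot{\gamma}_t| \le |\dot{\gamma}_t|_g$ a.e.

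For the lower bound I would fix a time $t_0$ that is simultaneously a Lebesgue point of the coordinate derivative of $\gamma$ in some chart about $x_0:=\gamma_{t_0}$ and a point where the metric speed exists; a.e.\;$t_0$ qualifies. Writing $g_0 := g(x_0)$ for the frozen metric, continuity of $g$ provides, for each $\varepsilon>0$, a neighbourhood $V$ of $x_0$ on which $(1-\varepsilon)^2 g_0(v,v) \le g_x(v,v) \le (1+\varepsilon)^2 g_0(v,v)$ uniformly in $x \in V$. For $s$ close to $t_0$ one then has
\[
\sfd_g(\gamma_s,\gamma_{t_0}) \;\ge\; (1-\varepsilon)\,\sfd_{g_0}(\gamma_s,\gamma_{t_0}),
\]
and since $g_0$ is constant in the chart, $\sfd_{g_0}(\gamma_s,\gamma_{t_0})$ is precisely the norm associated with the inner product $g_0$ of the coordinate difference. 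Dividing by $|s-t_0|$, passing to the limit along a Lebesgue point and then letting $\varepsilon \to 0$ yields $|\dot{\gamma}_{t_0}| \ge |\dot{\gamma}_{t_0}|_{g_0} = |\dot{\gamma}_{t_0}|_g$.

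The main obstacle is the inequality $\sfd_g \ge (1-\varepsilon)\sfd_{g_0}$ locally: pointwise comparison of the tensors immediately gives it for curves remaining inside $V$, but a priori competitors in the infimum defining $\sfd_g(\gamma_s,\gamma_{t_0})$ could leave $V$. This is handled by a confinement argument using that $g^{-1}$ is locally bounded and $g$ is continuous: any piecewise smooth curve exiting a slightly smaller ball $B \subset\subset V$ has $g$-length bounded below by a positive constant $c$ depending only on $\dist(B^c,x_0)$ and a lower bound for $g$ on $\overline V$. For $s$ close enough to $t_0$, we have $\sfd_g(\gamma_s,\gamma_{t_0}) < c$ (by the already-established upper bound), so only competitors staying in $V$ are relevant, and the comparison applies, concluding the proof.
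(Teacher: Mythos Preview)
The paper does not supply its own proof of this proposition; it is quoted from \cite{burtscher2012length}, Proposition 4.10, and stated without argument. Your proposal is correct and follows exactly the standard route used in that reference: the upper bound comes from the fact that absolutely continuous curves can be approximated (in $g$-length) by piecewise smooth competitors in \eqref{def:dg}, which together with Lebesgue differentiation gives $|\dot\gamma_t|\le |\dot\gamma_t|_g$; the lower bound uses continuity of $g$ to freeze the metric at $\gamma_{t_0}$, compare $\sfd_g$ locally with the flat distance $\sfd_{g_0}$, and a confinement argument to rule out competitors escaping the neighbourhood where the comparison is valid.

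Two minor remarks. First, in the upper bound you localise to ``a relatively compact chart''; since $\gamma([s,t])$ need not lie in a single chart, one should subdivide $[s,t]$ into finitely many pieces each contained in a chart and use the triangle inequality---this is routine. Second, the interpretation of $\sfd_{g_0}$ as the $g_0$-norm of the coordinate difference requires that the Euclidean segment between $\gamma_s$ and $\gamma_{t_0}$ stays in the chart; this holds once you shrink to a convex neighbourhood of $x_0$, which is compatible with your confinement step. With these standard adjustments, the argument is complete.
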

In the case of a manifold with a continuous Riemannian metric, it also turns out that the minimal weak upper gradient of a $C^1$-function $f$ can be identified as follows:
\begin{proposition}[\cite{mondino2025equivalence} Proposition 4.24]\label{weakgradientforc1}
    Let $M$ be a smooth manifold, $g$ a $C^0$-Riemannian metric on $M$.
    Let $f$ be a $C^1$-function. Then 
    \begin{align*}
        |Df|_w(x) = |\nabla_g f|_g(x), \quad {\text{for $\vol_g$-a.e. x.}}
    \end{align*}
\end{proposition}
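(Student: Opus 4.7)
My plan is to prove the two inequalities $|Df|_w \leq |\nabla_g f|_g$ and $|Df|_w \geq |\nabla_g f|_g$ separately, both $\vol_g$-a.e.

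\emph{Upper bound.} I would first show that $|\nabla_g f|_g$ is itself a weak upper gradient of $f$. Given any test plan $\boldsymbol{\pi}$, for $\boldsymbol{\pi}$-a.e.\ curve $\gamma$ the metric speed $|\dot{\gamma}_t|$ coincides a.e.\ in $t$ with $|\dot{\gamma}_t|_g$ by Proposition \ref{metric_speed_explicit}; and since $f \in C^1$, $f\circ\gamma$ is absolutely continuous with derivative $g_{\gamma_t}(\nabla_g f(\gamma_t), \dot{\gamma}_t)$. Cauchy-Schwarz then yields
$$|f(\gamma_1)-f(\gamma_0)| \leq \int_0^1 |\nabla_g f|_g(\gamma_t)\,|\dot{\gamma}_t|\,\di t,$$
and integrating against $\boldsymbol{\pi}$ verifies \eqref{weakgradient} with $G = |\nabla_g f|_g$, so that $|Df|_w \leq |\nabla_g f|_g$ $\vol_g$-a.e.

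\emph{Lower bound.} For the reverse inequality, my plan is to test \eqref{weakgradient} against a family of test plans built from flows of smooth vector fields, in order to extract pointwise control on $|Df|_w$. Fix a smooth compactly supported vector field $Y$ on $M$ with flow $\Phi^Y_\cdot$, and let $\mu = \rho\,\di\vol_g$ be a compactly supported probability measure with $\rho \in L^\infty$. For each $\epsilon > 0$ small, define $\boldsymbol{\pi}_\epsilon$ as the law under $\mu$ of the curve $t \mapsto \Phi^Y_{\epsilon t}(x)$. This is a test plan: since $g$ is continuous and $\Phi^Y$ smooth, the density of $(\Phi^Y_{\epsilon t})_\#\mu$ with respect to $\vol_g$ stays bounded uniformly in $t \in [0,1]$ and small $\epsilon$, and Proposition \ref{metric_speed_explicit} gives metric speed $\epsilon\,|Y|_g\circ\Phi^Y_{\epsilon t}$, also uniformly bounded. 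Applying \eqref{weakgradient} to $\boldsymbol{\pi}_\epsilon$, dividing by $\epsilon$, and letting $\epsilon \to 0$, the left-hand side converges by the $C^1$ regularity of $f$ to $\int_M |g_x(\nabla_g f(x), Y(x))|\,\di\mu(x)$, while the right-hand side converges to $\int_M |Df|_w(x)\,|Y(x)|_g\,\di\mu(x)$. Since $\mu$ is arbitrary, one obtains the pointwise inequality
$$|g_x(\nabla_g f(x), Y(x))| \leq |Df|_w(x)\,|Y(x)|_g \quad \text{for } \vol_g\text{-a.e. } x.$$
Finally, choosing a countable family $\{Y_n\}$ of smooth compactly supported vector fields that is locally dense among continuous unit (in the $g$-norm) vector fields, and invoking continuity of $\nabla_g f / |\nabla_g f|_g$ on $\{|\nabla_g f|_g > 0\}$, one concludes $|\nabla_g f|_g(x) \leq |Df|_w(x)$ for $\vol_g$-a.e.\ $x$.

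\emph{Main obstacle.} The crux is the $\epsilon \to 0$ limit of the right-hand side: since $|Df|_w$ is only defined $\vol_g$-a.e., justifying the commutation of the limit with the double integral is not automatic. This will require a Lebesgue-differentiation-type argument combined with the uniform density bounds on $(\Phi^Y_{\epsilon t})_\#\mu$, plus Fubini in $t$. Once this passage to the limit is in place, the Cauchy-Schwarz step and the countable-covering argument are essentially routine consequences of the continuity of $g$ (and hence of $\nabla_g f$).
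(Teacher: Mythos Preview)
The paper does not itself prove this proposition; it is quoted from \cite{mondino2025equivalence}. However, the paper later reproduces the mechanism of that proof (in Proposition~\ref{minimal_weak_upper_gradient_ex1}, which explicitly says ``We will argue similarly to the proof of Proposition~4.24 in \cite{mondino2025equivalence}'') and records the key auxiliary result as Lemma~\ref{Lebesguepointsadvanced_c}. From these one reads off the intended lower-bound argument: work in a coordinate chart, argue by contradiction on $\{|Df|_w < |\nabla_g f|_g - \varepsilon\}$, use Lemma~\ref{Lebesguepointsadvanced_c} to select a point $x$ at which $t=0$ is a Lebesgue point of the averages $t\mapsto \fint_{B_\delta(x)} |Df|_w(y+tv)\,\di y$ for all rational $\delta$ and a fixed direction $v$, and then test against the plan supported on the straight segments $t\mapsto y+\tau(2t-1)v$, $y\in B_\delta(x)$. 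The upper bound is exactly as you wrote.

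Your plan is correct and takes a genuinely different route for the lower bound: you work intrinsically on $M$ via flows of smooth vector fields rather than coordinate translations, and you argue directly rather than by contradiction. The ``main obstacle'' you flag---the $\varepsilon\to0$ limit on the right-hand side---is precisely the place where the paper's route inserts Lemma~\ref{Lebesguepointsadvanced_c}. In your formulation this limit can actually be handled more cheaply: after the change of variables $z=\Phi^Y_{\varepsilon t}(x)$ the right-hand side becomes $\int_0^1\!\int_M |Df|_w(z)|Y|_g(z)\,\rho(\Phi^Y_{-\varepsilon t}(z))\,J_{-\varepsilon t}(z)\,\di\vol_g\,\di t$, and the factor $\rho\circ\Phi^Y_{-\varepsilon t}\cdot J_{-\varepsilon t}$ is uniformly bounded and converges to $\rho$ in $L^1$ by strong continuity of the flow action, so dominated convergence suffices (even simpler if you restrict to continuous $\rho$, which is enough to conclude the pointwise a.e.\ inequality). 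Your final countable-density step is fine because $g\in C^0$ makes $\nabla_g f$ continuous. In summary: both arguments use short-time flows as test plans; the paper's version is chart-local and packages the limit passage into a separate Lebesgue-point lemma, whereas yours is coordinate-free and can dispatch the limit by a change of variables.
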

That was a crucial ingredient to conclude that: 
\smallskip

\begin{proposition}[\cite{mondino2025equivalence}, Corollary 4.27]
    Let $M$ be a manifold equipped with a continuous Riemannian metric $g$. Then the metric measure space $(M, \sfd_g, \vol_g)$ is infinitesimally Hilbertian. 
\end{proposition}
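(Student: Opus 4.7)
The plan is to combine Proposition \ref{weakgradientforc1} (the identification $|Df|_w = |\nabla_g f|_g$ for $f \in C^1$) with a density argument to transfer the evident Hilbertian structure of the classical Dirichlet energy on $C^1_c(M)$ to the full Cheeger energy.

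First, I would observe that for $f \in C^1_c(M)$, Proposition \ref{weakgradientforc1} gives
\begin{equation*}
\Ch(f) = \frac{1}{2}\int_M |\nabla_g f|_g^2 \, \di\vol_g = \frac{1}{2}\int_M g(\nabla_g f, \nabla_g f) \, \di\vol_g.
\end{equation*}
Since $f \mapsto \nabla_g f$ is linear and $g_x$ is a bilinear form on each tangent space $T_xM$, the parallelogram identity
\begin{equation*}
2\Ch(f) + 2\Ch(h) = \Ch(f+h) + \Ch(f-h)
\end{equation*}
holds for every $f, h \in C^1_c(M)$.

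The next step is to establish that $C^1_c(M)$ is dense in the domain $D(\Ch)$ with respect to the energy norm $\|f\|_E := \bigl(\|f\|_{L^2(\vol_g)}^2 + 2\Ch(f)\bigr)^{1/2}$. Using a partition of unity subordinate to an atlas by relatively compact charts, one reduces to the local picture. In each chart, the continuity of $g$ and the bounds $g, g^{-1} \in L^\infty_{\mathrm{loc}}$ make the Cheeger-Sobolev space coincide (as a set, with equivalent norms) with the classical $W^{1,2}$ space, so mollification provides $C^1_c$ approximations. This is exactly the content of the identification carried out in \cite{mondino2025equivalence}, and Proposition \ref{weakgradientforc1} is what ensures that the two norms actually agree (not just are equivalent) in the limit.

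Finally, the parallelogram identity extends from $C^1_c(M)$ to all of $D(\Ch)$ by a limiting argument: given $f, h \in D(\Ch)$ and approximations $f_n, h_n \in C^1_c(M)$ with $f_n \to f$ and $h_n \to h$ in $\|\cdot\|_E$, the values $\Ch(f_n)$, $\Ch(h_n)$, $\Ch(f_n \pm h_n)$ converge to $\Ch(f)$, $\Ch(h)$, $\Ch(f \pm h)$ (the Cheeger energy is continuous along convergent sequences in the energy norm, being the square of a seminorm on $D(\Ch)$), so the identity passes to the limit. Hence $\Ch$ is a quadratic form and $(M, \sfd_g, \vol_g)$ is infinitesimally Hilbertian. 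The main obstacle is the density step: without the continuity of $g$, the chart-wise identification with the classical Sobolev space breaks down, which is precisely why the later sections of the paper must construct counterexamples at lower regularity rather than merely weakening the argument above.
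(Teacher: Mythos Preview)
The paper does not give its own proof of this proposition; it is recorded purely as a citation to \cite[Cor.~4.27]{mondino2025equivalence}, with only the remark that Proposition~\ref{weakgradientforc1} ``was a crucial ingredient'' there. Your outline---use Proposition~\ref{weakgradientforc1} to identify $\Ch$ with the classical Dirichlet integral on $C^1_c(M)$, establish density of $C^1_c(M)$ in $D(\Ch)$ for the energy norm via the local identification of the Cheeger--Sobolev space with the standard $W^{1,2}$ space, and then pass the parallelogram law to the limit using that $\sqrt{2\Ch}$ is a seminorm---is correct and matches both the hint the paper gives and the structure of the argument in the cited reference.
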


\section{The distance on manifolds with non-continuous Riemannian metrics}\label{sec:3}
From now on we will consider a smooth $d$-dimensional manifold $M$ with a Riemannian metric $g$ such that $g, g^{-1} \in L^\infty_{\rm loc}(M)$. 
Let $U \subset M$ be open such that $U$ lies in one coordinate patch and let $\psi: U \to \R^d$ be a local trivialisation. Then, $g, g^{-1} \in L^\infty_{\rm loc}(M)$ means that for every such $U$, the components of $\psi_*g, \psi_*g^{-1}$ are equivalence classes of locally $\mathcal{L}^d$-measurable functions up to $\mathcal{L}^d$-almost everywhere equality in $\psi(U)$ and that for every compact set $K \subset \psi(U)$, there exist constants $\lambda, \Lambda >0$ such that $\lambda \mathrm{Id} \leq \psi_*g \leq \Lambda \mathrm{Id}$ $\mathcal{L}^d$-almost everywhere in $K$. Note that under these assumptions the measures $\mathcal{L}^d$ and $\vol_g$ are locally equivalent and hence have the same null sets.

Then $(M, g)$ falls into the class of LIP-Riemannian manifolds (see for example \cite{teleman1983index, de1991integral}). These manifolds and their metric structures, have been studied by Norris \cite{norris1997heat}, and De Cecco and Palmieri  \cite{de1991integral}, as well as  Saloff-Coste  \cite{saloff1992uniformly} and Sturm  \cite{sturm1995analysis} with emphasis on potential theory for uniformly elliptic operators. 
Note that LIP-manifolds shall not be confused with metric tensors whose coefficients are locally Lipschitz continuous.

If $g$ is only measurable, it might not be well-defined everywhere, hence the definition of the metric $\sfd_g$ from the continuous case \eqref{def:dg}, cannot be used for non-continuous Riemannian metrics. 
However, Norris \cite{norris1997heat} and De Cecco-Palmieri \cite{de1991integral} gave a definition of a distance $\sfd_g$ on $M$ arising from $g$ that avoids this difficulty. We will briefly summarise their results and then study the metric speed of curves, test plans, and the Cheeger energy on the corresponding metric measure space. 

In the case of a Riemannian manifold with $g, g^{-1}\in L^\infty_{\rm loc}$, Norris defines the distance $\sfd$ via
 \begin{align*}
     \sfd(x,y) := \sup \left\{  w(x)-w(y)  ,\ w \in C^{0,1}_{\rm loc}(M)\ \mathrm{and}\ |\nabla_g w|_g\leq 1\ \vol_g\mathrm{-a.e. } \right\}.
 \end{align*}
Here, $w \in C^{0,1}_{\rm loc}(M)$ means that the function $w$ is locally Lipschitz continuous with respect to the Euclidean distance on charts.
 
 Moreover, for $\rho\in C_c^\infty(B_1^{\R^d}(0), [0,\infty))$, with $\int_{\R^d} \rho\, \di x =1$, define $\rho_\e := \frac{1}{\e}\rho(\frac{x}{\e})$ and $g_\e := \rho_\e * g$ (for details on the convolution on manifolds see \cite{grosser2013geometric, graf2020singularity}). The metrics $g_\e$ are continuous and hence the distance $\sfd_{g_\e}$ is given as in \eqref{def:dg}. Norris then proves that for $\e \to 0$, $\sfd_{g_\e}$ converges to a distance $\sfd_0$ that is locally equivalent to the underlying Euclidean metric induced from a coordinate chart; in particular,  \cite[Theorem 3.6]{norris1997heat} states that $\sfd_0 = \sfd =: \sfd_g$. 

 From the properties of $g$, it follows that all points outside a $\vol_g$-null set $N_g \subset M$ are Lebesgue points of $g, g^{-1}$. For any $\vol_g$-null set $N \subset M$, define the set 
 \begin{align}
     \mathrm{Lip}_N(x, y, M):= \{\gamma \in C^{0,1}_{\rm{loc}}([0,1], M): \gamma(0)=x, \gamma(1)=y, \mathcal{L}^1(\{t \in [0,1]: \gamma(t) \in N\}) =0\}.
 \end{align}
 Here $C^{0,1}_{\rm loc}([0,1], M)$ refers to locally Lipschitz continuous curves with respect to Euclidean distance induced by the local charts. 
 If $\gamma \in C([0,1], M)$ satisfies $\mathcal{L}^1(\{t \in [0,1]: \gamma(t) \in N\}) =0$, we say that $\gamma$ is transversal to $N$ and write $\gamma \perp N$.

From now on, we will make a further assumption, that will trivially hold for all the relevant example manifolds in this paper. 

\smallskip
\begin{assumption}[\cite{de1991integral}, (1.13)]\label{assumption_on_mfd}
The manifold $(M,g)$ satisfies the following:
    \begin{itemize}
    \item[(i)] The manifold $M$ has infinite $\vol_g$-volume. 
    \item[(ii)] There exists a constant $h \geq 0$ such that all coordinate charts in our atlas satisfy that the smallest eigenvalue of $g$ is uniformly bounded below $h$ and $M$ is complete or the interior of a complete manifold (where the completeness is with respect to the distance induced by the charts as in \cite[(1.9)]{de1991integral}).
\end{itemize}
\end{assumption}

Assumption \ref{assumption_on_mfd} is relevant to most results in \cite{de1991integral}, in particular in establishing that two different definitions of the distance are equal. 
Indeed, De Cecco and Palmieri proved in \cite[Theorem (2.18) and Theorem (6.1)]{de1991integral}  that under Assumption \ref{assumption_on_mfd}, it holds $\sfd_g= \boldsymbol{\delta}$, where the distance $\boldsymbol{\delta}$ is given by 
 \begin{align}\label{null_set_def_of_metric}
    \boldsymbol{\delta}(x, y)= \sup_{N\subset M, \vol_g(N)=0} \boldsymbol{\delta}_N(x, y),
 \end{align}
where
 \begin{align}
     \boldsymbol{\delta}_N :=\inf\{L_g(\gamma), \gamma \in \Lip_N(x, y, M)\},
 \end{align}
 and $L_g(\gamma) = \int_0^1 |\dot{\gamma}_t|_g \di t$.
 In \cite[Theorem 4.4]{de1988distanza}, it is shown that for each $\vol_g$-null set $N_0 \subset M$, $\boldsymbol{\delta} = \boldsymbol{\delta}_0:= \sup_{N, \vol_g(N)=0} \boldsymbol{\delta}_{N \cup N_0}$. 
\smallskip
  
 \begin{proposition}
     Let $\gamma \in C^{0,1}_{\rm loc}([0, 1] \to M)$ be a curve such that $\gamma \perp N_g$. Then, for almost every $t \in [0,1]$, it holds
     \begin{align*}
          |\dot{\gamma}_t| \leq |\dot{\gamma}_t|_g.
     \end{align*}
 \end{proposition}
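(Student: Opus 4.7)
The plan is to establish the stronger integral inequality
\[
\sfd_g(\gamma_s, \gamma_t) \leq \int_s^t |\dot{\gamma}_r|_g \, \di r \qquad \text{for every } 0 \leq s < t \leq 1,
\]
from which the pointwise statement will follow by Lebesgue differentiation. As a preliminary observation, since $\sfd_g$ is locally equivalent to the Euclidean metric induced by a chart (recalled just before the proposition), the $\sfd_g$-geodesic $\gamma$ is locally Euclidean Lipschitz, so its coordinate derivative $\dot{\gamma}_r$ exists for a.e.\;$r$ by Rademacher's theorem; combined with $\gamma \perp N_g$ and $g \in L^\infty_{\rm loc}$, this makes $r \mapsto |\dot{\gamma}_r|_g$ a bounded measurable function on $[0,1]$.

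To prove the integral inequality, I would use the mollifications $g_\e = \rho_\e \ast g$ introduced immediately before the statement; a finite covering of the compact image of $\gamma$ by coordinate charts reduces us to the case where a given subinterval $[s,t]$ lies entirely in one chart, which is all that is ultimately needed. There each $g_\e$ is a continuous Riemannian metric, and the classical definition \eqref{def:dg}, together with the standard approximation of a Lipschitz curve by piecewise smooth ones (whose $g_\e$-lengths converge by continuity of $g_\e$), yields
\[
\sfd_{g_\e}(\gamma_s, \gamma_t) \leq \int_s^t |\dot{\gamma}_r|_{g_\e}\, \di r.
\]
On the left, Norris's convergence theorem \cite[Theorem 3.6]{norris1997heat} gives $\sfd_{g_\e}(\gamma_s, \gamma_t) \to \sfd_g(\gamma_s, \gamma_t)$. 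On the right, the standard fact that $g_\e(x) \to g(x)$ at every Lebesgue point $x \notin N_g$, together with the transversality $\gamma \perp N_g$, forces $|\dot{\gamma}_r|_{g_\e} \to |\dot{\gamma}_r|_g$ for a.e.\;$r$; the uniform bound $\|g_\e\|_{L^\infty} \leq \|g\|_{L^\infty}$ on a neighborhood of $\gamma([0,1])$ then justifies dominated convergence and delivers $\int_s^t |\dot{\gamma}_r|_{g_\e}\, \di r \to \int_s^t |\dot{\gamma}_r|_g\, \di r$, completing the proof of the integral inequality.

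To finish, I pick a time $t \in [0,1]$ at which both $|\dot{\gamma}_t| = \lim_{h \to 0} \sfd_g(\gamma_t, \gamma_{t+h})/|h|$ exists and $t$ is a Lebesgue point of the bounded function $r \mapsto |\dot{\gamma}_r|_g$; the set of such $t$ has full measure. Applying the integral inequality on $[t, t+h]$ and letting $h \to 0^+$ yields $|\dot{\gamma}_t| \leq |\dot{\gamma}_t|_g$. The main delicate point is the interplay between the Euclidean structure of a chart (used for the mollification and for differentiability of $\gamma$) and the intrinsic distance $\sfd_g$ (whose convergence under mollification is exactly Norris's theorem). Note that the geodesic hypothesis enters only through the Lipschitz regularity it provides, so the same argument gives the same bound for any $\sfd_g$-Lipschitz curve transverse to $N_g$.
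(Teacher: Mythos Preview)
Your proof is correct and follows essentially the same route as the paper: mollify $g$, use the inequality $\sfd_{g_\e}(\gamma_s,\gamma_t)\leq \int_s^t|\dot\gamma_r|_{g_\e}\,\di r$ for the continuous metric, pass to the limit via Norris's theorem on the left and dominated convergence (justified by $\gamma\perp N_g$ and $g\in L^\infty_{\rm loc}$) on the right, then Lebesgue-differentiate. You are slightly more explicit than the paper about why $\gamma$ is locally Euclidean Lipschitz and about the chart-level justification of the length inequality for $g_\e$, but the argument is the same.
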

 \begin{proof}
     For a Lipschitz curve $\gamma \perp N_g$, and any $t \in [0,1]$, $h \neq 0$, we have that
     \begin{align*}
         \sfd_g(\gamma_t, \gamma_{t+h}) = \lim_{\e \to 0} \sfd_{g_\e}(\gamma_t, \gamma_{t+h}) \leq \lim_{\e \to 0}\int_0^h |\dot{\gamma}_{t+\tau}|_{g_\e}\, \di \tau = \int_0^h |\dot{\gamma}_{t+\tau}|_g\, \di \tau,
     \end{align*}
     where the last equality follows from the dominated convergence theorem and the fact that $\rho_\e*g \to g$ pointwise outside $N_g$. Now the characterisation $|\dot{\gamma}_t|= \lim_{h \to 0} \frac{\sfd_g(\gamma_t, \gamma_{t+h})}{|h|}$ a.e. together with the Lebesgue differentiation theorem yields the proposition. 
 \end{proof}
 There are more observations that can be made on the metric speed of absolutely continuous curves. The first one is that the metric speed depends on the derivative of the curve. 
\smallskip
 
 \begin{proposition}\label{metric_speed_depends_on_direction}
 Let $M$ be a smooth manifold and $g$ a Riemannian metric such that $g, g^{-1} \in L^\infty_{\rm loc}$. 
     Let $\gamma:[0,1] \to M$ be a Lipschitz continuous curve and $t\in [0,1]$ such that the metric speed $|\dot{\gamma}_t|$ exists at $t$ and $\gamma$ is differentiable at $t$. Let $c:[0,1] \to M$ be another curve such that $c_t = \gamma_t$, $c$ is differentiable at $t$ and $\dot{c}_t = \dot{\gamma}_t$. Then the metric speed $|\dot{c}_t|$ of $c$ at $t$ exists and $|\dot{c}_t| = |\dot{\gamma}_t|$. 
 \end{proposition}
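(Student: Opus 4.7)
The proof should isolate the fact that the metric speed at the single time $t$ depends only on the first-order jet of the curve at $t$, and this is a direct consequence of the triangle inequality together with the local comparison between $\sfd_g$ and the Euclidean distance induced by a coordinate chart. The plan is as follows.

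First, I would fix a coordinate chart $(U, \varphi)$ around $\gamma_t = c_t$ and identify $U$ with an open subset of $\R^d$. Since $g, g^{-1} \in L^\infty_{\rm loc}$, there is a neighborhood $V \subset U$ of $\gamma_t$ and a constant $C>0$ with $g \leq C \cdot Id$ a.e.\ on $V$. For the mollifications $g_\e = \rho_\e * g$, this upper bound persists, hence $\sfd_{g_\e}(x,y) \leq \sqrt{C}\,|x-y|_{euc}$ for $x,y$ in a smaller neighborhood $V' \subset V$ and all sufficiently small $\e$. Passing to the limit $\e \to 0$ and invoking Norris' convergence result $\sfd_{g_\e} \to \sfd_g$ (recalled in Section \ref{sec:3}), I obtain
\begin{equation*}
\sfd_g(x,y) \leq \sqrt{C}\,|x-y|_{euc} \quad \text{for } x,y \in V'.
\end{equation*}

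Second, I would use the shared $1$-jet at $t$. By differentiability of $\gamma$ and $c$ at $t$ with $c_t = \gamma_t$ and $\dot{c}_t = \dot{\gamma}_t$, in the chosen coordinates
\begin{equation*}
\gamma_{t+h} = \gamma_t + h\dot{\gamma}_t + o(|h|), \qquad c_{t+h} = \gamma_t + h\dot{\gamma}_t + o(|h|),
\end{equation*}
so $|\gamma_{t+h} - c_{t+h}|_{euc} = o(|h|)$. For $|h|$ small enough, both $\gamma_{t+h}$ and $c_{t+h}$ lie in $V'$, and the comparison above gives
\begin{equation*}
\sfd_g(\gamma_{t+h}, c_{t+h}) \leq \sqrt{C}\,|\gamma_{t+h} - c_{t+h}|_{euc} = o(|h|).
\end{equation*}

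Third, I would apply the triangle inequality using $c_t = \gamma_t$:
\begin{equation*}
\bigl|\sfd_g(c_t, c_{t+h}) - \sfd_g(\gamma_t, \gamma_{t+h})\bigr| \leq \sfd_g(\gamma_{t+h}, c_{t+h}) = o(|h|).
\end{equation*}
Dividing by $|h|$ and letting $h \to 0$, the right-hand side vanishes, and since $\lim_{h\to 0}\sfd_g(\gamma_t,\gamma_{t+h})/|h| = |\dot{\gamma}_t|$ exists by assumption, the limit $\lim_{h \to 0}\sfd_g(c_t, c_{t+h})/|h|$ exists and equals $|\dot{\gamma}_t|$, proving both the existence of $|\dot{c}_t|$ (as such a limit) and the claimed equality.

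The only nontrivial step is the local Lipschitz upper bound $\sfd_g \leq \sqrt{C}|\cdot|_{euc}$ on $V'$; everything else is a triangle-inequality calculation. This upper bound is subtle precisely because in the non-continuous regime the definition of $\sfd_g$ involves either a sup over null-set avoidance or a mollification limit, so one must carry the uniform pointwise bound $g_\e \leq C\cdot Id$ through the limit rather than arguing directly with piecewise smooth curves. Once this is in place, the proposition follows cleanly.
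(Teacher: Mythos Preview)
Your proposal is correct and follows essentially the same route as the paper's proof: work in local coordinates, use the bound $g \leq \Lambda\, Id$ to get $\sfd_g(\gamma_{t+h}, c_{t+h}) \leq \sqrt{\Lambda}\,|\gamma_{t+h}-c_{t+h}|_{euc} = o(|h|)$, and finish with the triangle inequality. The only cosmetic difference is that the paper invokes the local equivalence of $\sfd_g$ with the Euclidean distance directly (as established in Section~\ref{sec:3} via Norris), whereas you spell out the mollification argument explicitly.
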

 \begin{proof}
     We work in local coordinates in $\R^d$ and write $\gamma_t =  c_t =:p$ and  $\dot{c}_t = \dot{\gamma}_t=v$. Furthermore, we may restrict to a small neighbourhood $U$ around $p$ and assume that there exist $\lambda, \Lambda>0$ such that $\lambda \mathrm{Id} \leq g \leq \Lambda \mathrm{Id}$. Then, by the definition of the derivative, we get that 
     \begin{align*}
         \gamma_{t+h} = p + hv + a_{\gamma}(h), \  c_{t+h} = p + hv + a_{c}(h) \in \R^d,
     \end{align*}
     where
     \begin{align}
         \lim_{h \to 0} \frac{|a_{\gamma}(h)|_{euc}+|a_{c}(h)|_{euc}}{|h|} = 0.
     \end{align}
     Now, we can compute that 
     \begin{align}
         \sfd_g(\gamma_{t+h}, c_{t+h}) \leq \sqrt{\Lambda} (|a_{\gamma}(h)|_{euc}+|a_{c}(h)|_{euc}),
     \end{align}
     and hence 
     \begin{align*}
         0 \leq \lim_{h \to 0} \frac{\sfd_g(\gamma_{t+h}, c_{t+h})}{|h|} \leq \lim_{h \to 0} \sqrt{\Lambda} \frac{|a_{\gamma}(h)|_{euc}+|a_{c}(h)|_{euc}}{|h|} =0.
     \end{align*}
     But then 
     \begin{align*}
        \limsup_{h \to 0}\bigg| \frac{\sfd_g(c_t, c_{t+h})}{|h|} -  \frac{\sfd_g(\gamma_t, \gamma_{t+h})}{|h|}\bigg| \leq  \limsup_{h \to 0}  \frac{ \sfd_g(\gamma_{t+h}, c_{t+h})}{|h|} = 0.
     \end{align*}
 \end{proof}
 Next, we establish that the metric speed depends locally Lipschitz-continuously on the derivative of a curve. 
\smallskip
 
 \begin{proposition}\label{metric_speed_continuous_from_direction}
     Let $\gamma^1, \gamma^2 :[0,1] \to M$ be two Lipschitz continuous curves that are contained in a compact set $K$ inside one coordinate patch $V$ and $t \in [0,1]$ such that $\gamma^1_t = \gamma^2_t$ and such that the metric speeds $|\dot{\gamma}^1_t|$ and $|\dot{\gamma}^2_t|$ as well as their derivatives $\dot{\gamma}^1_t,\dot{\gamma}^2_t\in T_{\gamma^1_t}M$ exist. Then there exists a constant $C =C(K)$ such that 
     \begin{align}
         ||\dot{\gamma}^1_t|-|\dot{\gamma}^2_t|| \leq C|\dot{\gamma}^1_t-\dot{\gamma}^2_t|_g.
     \end{align}
 \end{proposition}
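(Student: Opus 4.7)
The plan is to reduce the statement to the case of two straight line curves in a local coordinate patch (by invoking the previous proposition, which says the metric speed depends only on the derivative at $t$), and then exploit the upper bound $g \leq \Lambda \,\mathrm{Id}$ on $K$, exactly as was done in the preceding proof.

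More precisely, I would work in a coordinate chart containing $K$, choose constants $0 < \lambda \leq \Lambda$ such that $\lambda \,\mathrm{Id} \leq g \leq \Lambda\, \mathrm{Id}$ on $K$, and set $p := \gamma^1_t = \gamma^2_t$, $v_i := \dot{\gamma}^i_t \in \R^d$. Define the auxiliary curves
\begin{align*}
c^i_s := p + (s-t)\,v_i, \qquad i = 1,2,
\end{align*}
which are $C^\infty$ and satisfy $c^i_t = p$, $\dot{c}^i_t = v_i$. By the previous proposition (metric speed depends only on the derivative), $|\dot{c}^i_t|$ exists and equals $|\dot{\gamma}^i_t|$, so it suffices to prove the Lipschitz estimate for the $c^i$.

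Next I would apply the reverse triangle inequality for $\sfd_g$. Since $|\dot{c}^i_t| = \lim_{h \to 0} \sfd_g(p, c^i_{t+h})/|h|$, we get
\begin{align*}
\bigl||\dot{c}^1_t| - |\dot{c}^2_t|\bigr|
= \lim_{h \to 0}\frac{\bigl|\sfd_g(p, c^1_{t+h}) - \sfd_g(p, c^2_{t+h})\bigr|}{|h|}
\leq \limsup_{h \to 0}\frac{\sfd_g(c^1_{t+h}, c^2_{t+h})}{|h|}.
\end{align*}
Now the key input is the same bound used in the proof of the preceding proposition: on the compact set $K$ we have $\sfd_g(x,y) \leq \sqrt{\Lambda}\,|x-y|_{\mathrm{euc}}$, obtained by taking $\e \to 0$ in the inequality $\sfd_{g_\e}(x,y) \leq \int_0^1 |y-x|_{g_\e(x+s(y-x))}\di s \leq \sqrt{\Lambda}\,|y-x|_{\mathrm{euc}}$ (valid for $\e$ small, using the uniform bound $g_\e \leq \Lambda\, \mathrm{Id}$ on a neighbourhood of $K$ and $\sfd_g = \lim_\e \sfd_{g_\e}$). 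Applying it to $c^1_{t+h} - c^2_{t+h} = h(v_1 - v_2)$ yields
\begin{align*}
\sfd_g(c^1_{t+h}, c^2_{t+h}) \leq \sqrt{\Lambda}\,|h|\,|v_1 - v_2|_{\mathrm{euc}}.
\end{align*}

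Finally, since $\lambda \,\mathrm{Id} \leq g$ on $K$, we can trade Euclidean for $g$-norm via $|v_1 - v_2|_{\mathrm{euc}} \leq \lambda^{-1/2}|v_1 - v_2|_g$, and dividing by $|h|$ and sending $h \to 0$ gives
\begin{align*}
\bigl||\dot{\gamma}^1_t| - |\dot{\gamma}^2_t|\bigr| \leq \sqrt{\Lambda/\lambda}\,|\dot{\gamma}^1_t - \dot{\gamma}^2_t|_g,
\end{align*}
with $C := \sqrt{\Lambda/\lambda}$ depending only on $K$. I do not see a serious obstacle; the only subtlety is the global Euclidean upper bound on $\sfd_g$, which is however already implicit in the preceding proof and follows from the mollification approach of Norris.
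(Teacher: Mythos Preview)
Your proposal is correct and follows essentially the same approach as the paper's proof: both reduce to straight-line curves via the preceding proposition, apply the reverse triangle inequality for $\sfd_g$, estimate $\sfd_g(c^1_{t+h},c^2_{t+h}) \leq \sqrt{\Lambda}\,|h|\,|v_1-v_2|_{euc}$ using $g \leq \Lambda\,Id$, and convert back to the $g$-norm via $\lambda\,Id \leq g$, arriving at the same constant $C=\sqrt{\Lambda/\lambda}$. The only cosmetic difference is that the paper translates to $t=0$ rather than writing $c^i_s = p+(s-t)v_i$.
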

 \begin{proof}
     We may assume to be working in $\R^d$. Now there exist $\lambda, \Lambda >0$ such that $\lambda \mathrm{Id} \leq g \leq \Lambda \mathrm{Id}$ on $K$. 
     Denote $v := \dot{\gamma}^1_t$ and $w := \dot{\gamma}^2_t$. By translation we may assume that $t=0$ and denote $p:=\gamma^1_0 = \gamma^2_0$. 
     By the previous proposition, we may assume that $\gamma^1_t = p+tv$ and $\gamma^2_t = p+tw$. But then 
     \begin{align*}
         \Big|\frac{\sfd_g(\gamma^1_0, \gamma^1_h)}{|h|} -\frac{\sfd_g(\gamma^2_0, \gamma^2_h)}{|h|}\Big| &\leq \frac{\sfd_g(p+hv, p+hw)}{|h|} \leq \sqrt{\Lambda} \frac{\sfd_{euc}(p+hv,p+hw)}{|h|} \\
         &= \sqrt{\Lambda}|v-w|_{euc} \leq \sqrt{\frac{\Lambda}{\lambda}}|v-w|_g = \sqrt{\frac{\Lambda}{\lambda}} |\dot{\gamma}^1_t-\dot{\gamma}^2_t|_g.
     \end{align*}
 \end{proof}

 \begin{lemma}\label{testplans_on_transversal}
     Let $\boldsymbol{\pi}\in \mathcal{P}(C([0,1], M))$ be a test plan and $N\subset M$ a $\vol_g$-null set. Then, it holds that 
     \begin{align}
         \boldsymbol{\pi}\Big(\{\gamma \in C([0,1], M): \gamma \perp N \}\Big)=1.
     \end{align}
 \end{lemma}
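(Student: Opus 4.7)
The plan is a standard Fubini argument exploiting the bounded compression property of test plans. Without loss of generality I may enlarge $N$ to a Borel $\vol_g$-null set, so that $\mathbbm{1}_N$ is Borel measurable on $M$. Define
\[
F : C([0,1], M) \to [0,1], \qquad F(\gamma) := \mathcal{L}^1\bigl(\{ t \in [0,1] : \gamma_t \in N \}\bigr) = \int_0^1 \mathbbm{1}_N(\gamma_t)\, \di t.
\]
The integrand $(\gamma, t) \mapsto \mathbbm{1}_N(\gamma_t) = \mathbbm{1}_N(e_t(\gamma))$ is jointly Borel measurable because the evaluation map $(\gamma, t) \mapsto e_t(\gamma) = \gamma_t$ is continuous, so $F$ is Borel measurable on $C([0,1],M)$.

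Next I apply Fubini's theorem and the defining property of a test plan $\boldsymbol{\pi}$, namely $(e_t)_\#\boldsymbol{\pi} \leq C(\boldsymbol{\pi})\vol_g$ for every $t \in [0,1]$:
\[
\int F(\gamma)\, \di\boldsymbol{\pi}(\gamma) = \int_0^1 \int \mathbbm{1}_N(\gamma_t)\, \di\boldsymbol{\pi}(\gamma)\, \di t = \int_0^1 (e_t)_\#\boldsymbol{\pi}(N)\, \di t \leq C(\boldsymbol{\pi})\, \vol_g(N) = 0.
\]
Since $F \geq 0$, this forces $F(\gamma) = 0$ for $\boldsymbol{\pi}$-a.e.\ $\gamma$, which is exactly the statement $\gamma \perp N$ for $\boldsymbol{\pi}$-a.e.\ $\gamma$.

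I do not foresee a real obstacle: the argument is essentially a one-line computation. The only minor technical points are (a) ensuring measurability of $F$, which follows from continuity of the evaluation map and Borel measurability of $\mathbbm{1}_N$ after replacing $N$ with a Borel null set that contains it; and (b) the fact that only the bounded-compression condition on $\boldsymbol{\pi}$ (not the finite kinetic energy) is used here.
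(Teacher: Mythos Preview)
Your proof is correct and follows essentially the same approach as the paper: both rely on Fubini's theorem together with the bounded compression $(e_t)_\#\boldsymbol{\pi} \ll \vol_g$ to conclude that $\int\int_0^1 \mathbbm{1}_N(\gamma_t)\,\di t\,\di\boldsymbol{\pi}(\gamma)=0$. Your presentation is in fact slightly more direct, since the paper wraps the same computation in a contradiction argument (discretising into the sets $\{\gamma:\mathcal{L}^1(\gamma^{-1}(N))\ge 1/n\}$), whereas you argue immediately from nonnegativity of $F$.
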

 \begin{proof}
     We argue by contradiction. Assume that $\boldsymbol{\pi}\Big(\{\gamma \in C([0,1], M): \gamma \perp N \}\Big)<1$. 
     Then as 
     \begin{align*}
         \{\gamma \in C([0,1], M): \gamma \not \perp N \}= \bigcup_{n \in \N} \{\gamma \in C([0,1], M): \mathcal{L}^1(\gamma^{-1}(N)) \geq \frac{1}{n}\},
     \end{align*}
     there exists an $n \in \N$ and an $\e>0$ such that
     \begin{align}
       \boldsymbol{\pi} (\{\gamma \in C([0,1], M): \mathcal{L}^1(\gamma^{-1}(N)) \geq \frac{1}{n}\}) =\e >0.
     \end{align}
     Denote $\{\gamma \in C([0,1], M): \mathcal{L}^1(\gamma^{-1}(N)) \geq \frac{1}{n}\}=:B_n$ and note that for each curve $\gamma \in B_n$, it holds
     \begin{align}\label{null_set_count_along_bad_curve}
         \int_0^1 \mathbbm{1}_{N}(\gamma_t)\,\di t \geq \frac{1}{n},
     \end{align}
     hence, with Fubini's theorem, we get that 
      \begin{align}\label{null_set_count_along_test_plan}
         \int_0^1 \int_{C([0,1], M)} \mathbbm{1}_{N}(\gamma_t)\,\di\boldsymbol{\pi}(\gamma)\di t =  \int_{C([0,1], M)} \int_0^1 \mathbbm{1}_{N}(\gamma_t)\,\di t\di\boldsymbol{\pi}(\gamma) \geq \frac{\e}{n}>0.
     \end{align}
     By the definition of a test plan, we have that for all $t\in [0,1]$ it holds $(e_t)_\#\boldsymbol{\pi} \ll \vol_g$.
     As $N$ is a $\di\vol_g$-null set, we get that for all $t\in [0,1]$, it holds
     \begin{align*}
         \int_{M} \mathbbm{1}_{N}(y)\, \di (e_t)_\#\boldsymbol{\pi}(y) =0.
     \end{align*}
     That gives
     \begin{align*}
          \int_0^1 \int_{C([0,1], M)} \mathbbm{1}_{N}(\gamma_t)\,\di\boldsymbol{\pi}(\gamma)\di t = \int_0^1 \int_M \mathbbm{1}_{N}(y)\,\di(e_t)_\#\boldsymbol{\pi}(y)\di t =0,
     \end{align*}
     which contradicts \eqref{null_set_count_along_test_plan}. This proves the Lemma.
 \end{proof}
We conclude this section with the definition of a quasi-Riemannian manifold proposed by De Giorgi in \cite{de1990conversazioni}, that we will refer to later:
\smallskip

\begin{definition}\label{def:quasi_riemannian_manifold}
    Given a manifold $M$, a distance $\sfd$ on $M$, and a measurable positive definite inner product $g$ on $TM$, we  
  call  $(M, \sfd, g)$ a quasi-Riemannian manifold if there exists an atlas $\mathcal{A} = \{W, \psi\}$ such that the following conditions hold:
    \begin{itemize}
        \item[(i)]  In each $(W, \psi)$, there exist constants $\lambda, \Lambda > 0$ such that for any $x, y \in W$, it holds
        \begin{align*}
            \lambda \leq (\psi_*g)^{ij}\partial_{x^i}\sfd^W_{euc}(x, y)\partial_{x^j}\sfd^W_{euc}(x, y) \leq \Lambda.
        \end{align*}
        \item[(ii)] For any $f \in C^0_c(W)$, it holds \begin{align*}
            \int_W f \di \mathcal{H}^d_{\sfd} = \int_W f\sqrt{\det g}\,  \di \mathcal{L}^d,
        \end{align*}
        where $\mathcal{H}^d_{\sfd}$ denotes the $d$-dimensional Hausdorff measure with respect to the distance $\sfd$.
        \item[(iii)]  For any Lipschitz continuous $f: M \to \R$, 
 and almost every $x \in M$ it holds 
        \begin{align*}
            \limsup_{y \to x} \frac{|f(x)-f(y)|}{\sfd(x, y)} = |\nabla_g f|_g(x).
        \end{align*}
    \end{itemize}
\end{definition}
\section{Construction of examples}\label{sec:4}
In this section, we will construct Riemannian manifolds for with non-continuous metrics that together with the distance $\sfd_g$ as defined in the previous section, do not give rise to quasi-Riemannian manifolds or infinitesimally Hilbertian metric measure spaces.  
\subsection{An introductory example of a non-quasi Riemannian manifold}\label{subsec:first_counter_quasi_riem}
We will now construct a metric $g$ on $\R^d$ for $d \geq 2$ such that $g, g^{-1} \in L^\infty$ and such that there exists a smooth function $f:\R^d \to \R$ such that $|Df|_w,|Df|> |\nabla_g f|_g$ on a set of positive measure, which implies that $(M, \sfd_g, g)$, where $\sfd_g$ is defined as in \eqref{null_set_def_of_metric}, is not a quasi-Riemannian manifold.
In \cite{sturm1997diffusion}, Sturm proved that given a Riemannian manifold $(M, g)$, $g, g^{-1} \in L^\infty_{\rm loc}$, there exists a metric $g'$ on $M$ such that $g', {g'}^{-1} \in L^\infty_{\rm loc}$, $g'<g$ on a set of positive $\vol_g$-measure and $\sfd_g= \sfd_{g'}$, which immediately shows that Questions \ref{q1} and \ref{q2} may have negative answers. The example presented in this subsection is related to the one in \cite{de1995lip} and specifically designed to pave the way for examples, where also infinitesimal Hilbertianity fails, that will be discussed in later subsections

Let $q_i \in \Q$ be a counting of $\Q$. Pick a $\kappa \in (0, \frac{1}{8})$ and construct the following open set 
\begin{align}\label{ex1_rational_cover}
    O:= \bigcup_{i=1}^\infty \Big(q_i-\frac{\kappa}{2^i}, q_i + \frac{\kappa}{2^i}\Big).
\end{align}
Then $\mathcal{L}^1(O) \leq 2\kappa$ and $O$ is dense in $\R$. We now define
\begin{align*}
    &\theta:= (2 -\mathbbm{1}_{O}) \in L^\infty(\R), \\
    &g(x_1, \ldots, x_d):= \theta(x_1)\mathrm{Id} \in L^\infty(\R^d, \R^{d\times d}).
\end{align*}
Then $(\R^d, g)$ satisfies Assumption \ref{assumption_on_mfd} and $(\R^d, \sfd_g, \vol_g)$ satisfies condition \eqref{condition_on_measure_bounded_compression}.
We observe that  
\begin{align*}
    (2-2\kappa) \leq \int_{(0,1)} \theta \, \di \mathcal{L}^1 &\leq \frac{3}{2} \mathcal{L}^1\Big(\big\{\theta < \frac{3}{2}\big\}\cap(0,1)\Big) + 2 \mathcal{L}^1\Big(\big\{\theta \geq \frac{3}{2}\big\}\cap (0,1)\Big) \\
    &=\frac{3}{2}\Big(1-\mathcal{L}^1\Big(\big\{\theta \geq \frac{3}{2}\big\}\cap (0,1)\Big)\Big)+ 2\mathcal{L}^1\Big(\big\{\theta \geq \frac{3}{2}\big\}\cap (0,1)\Big). 
\end{align*}
It follows that
\begin{align}\label{big_values_of_g}
    \mathcal{L}^1\Big(\big\{\theta \geq \frac{3}{2}\big\}\cap (0,1)\Big) \geq 1-4\kappa \geq \frac{1}{2}.
\end{align}
Now for any point $y \in  (0,1) \times\R^{d-1}$ consider the curve $\gamma^y:[0,1]\to \R^d, t \mapsto y+te_2$. 

\textbf{Claim.}
For all $t \in (0,1)$, $h \in (0, 1-t)$ it holds
\begin{align} \label{distance_along_straight_curves}
    \sfd_g(\gamma^y_t, \gamma^y_{t+h})=h. 
\end{align}
\textit{Proof of the claim.} Fix $t, h$ as above and fix an $\e>0$. Note that from $g \geq \mathrm{Id}$, we get that $\sfd_g \geq \sfd_{euc}$, hence $\sfd_g(\gamma^y_t, \gamma^y_{t+h})\geq h$. Thus, it remains to prove that $\sfd_g(\gamma^y_t, \gamma^y_{t+h}) \leq h$.   There exists a $q \in \Q\cap (0,1)$ such that $|x_1(y)-q|\leq \e$. Let $i\in \N$ be such that $q_i = q$ in the chosen counting.  Then define the curves
\begin{align*}
    &\lambda_1:[0, 1] \to \R^d, s \mapsto  \gamma^y_t + s(q-x_1(y))e_1, \\
    &\lambda_2:[0,h]\to \R^d, s \mapsto \gamma^y_t + (q-x_1(y))e_1 + se_2, \\
    &\lambda_3:[0, 1] \to \R^d, s \mapsto \gamma^y_t + (q-x_1(y))e_1 + he_2 -s(q-x_1(y))e_1. 
\end{align*}
Denote by $\lambda$ the concatenation of those three curves and note that it yields a piecewise smooth curve that connects $\gamma^y_t$ and $\gamma^y_{t+h}$.
Now, for all $\varsigma>0$, it holds $g_\varsigma = g * \rho_\varsigma \leq 2\mathrm{Id}$, so we get that $L_{g_\varsigma}(\lambda_1)+ L_{g_\varsigma}(\lambda_3) \leq 4\e$.  
For $\varsigma \leq \frac{\kappa}{2^{i+2}}$ that $g_\varsigma|_{\{x_1=q\}}\equiv \mathrm{Id}$, hence $L_{g_\varsigma}(\lambda_2) =h$. 
Thus, 
\begin{align*}
    \lim_{\varsigma \to 0} \sfd_{g_\varsigma}(\gamma^y_t, \gamma^y_{t+h}) \leq \lim_{\varsigma \to 0} L_{g_\varsigma}(\lambda) \leq h+4\e.
\end{align*}
As $\e$ was arbitrary, this proves the claim. \\

By \eqref{big_values_of_g}, we can choose a set $E'\subset \{\theta \geq \frac{3}{2}\big\}\cap (0,1)$ such that $\mathcal{L}^1(E')>0$. Denote $E:= E' \times (0,1)^{d-1}$. Define the test plan $\boldsymbol{\pi} \in \mathcal{P}(C^\infty((0,1), \R^d))$ via
\begin{align*}
    \di\boldsymbol{\pi}(\gamma):= \left\{ \begin{array}{ll}
           \frac{1}{\mathcal{L}^d(E)}\di\mathcal{L}^d(y) & \mathrm{if}\ \gamma = \gamma^{y}, \ \mathrm{for \ some} \ y \in  E, \\
           0\  & \mathrm{otherwise}.
        \end{array}\right.
\end{align*}
Note that this is indeed a feasible test plan, as $\di\mathcal{L}^d \leq \di\vol_g \leq 2^{\frac{d}{2}}\di\mathcal{L}^d$.
But now for all $y\in E$, almost all $t\in [0,1]$, we get that $g(\gamma^y_t) \geq \frac{3}{2}\mathrm{Id}$ and $\dot{\gamma}^y_t = e_2$ and $|\dot{\gamma}^y_t|_g \geq \sqrt{\frac{3}{2}}$. But by \eqref{distance_along_straight_curves}, we get that the metric speed $|\dot{\gamma}^y_t| =1$ for all $y, t$, hence we found a test plan that gives positive measure to curves whose metric speed and the norm of the derivative are not equal. 

We will now give an example of a function $f\in C^\infty(\R^d)$ for which the norm of the gradient is strictly smaller than the minimal weak upper gradient and the metric slope on a set of points with positive measure. The function is given by $f: \R^d\to \R, x=(x_1, \ldots, x_d) \mapsto x_2$. Then for all $y\in E$, we have that $|\nabla_g f|_g(y) \leq \sqrt{\frac{2}{3}}$. But now for any point $y \in \R^d$, we have that 
\begin{align}
    |Df|(y) = \limsup_{z \to y} \frac{|f(z)-f(y)|}{\sfd_g(z, y)} \geq \lim_{h \to 0} \frac{|f(y+he_2)-f(y)|}{\sfd_g(y+he_2, y)} = \lim_{h \to 0} \frac{|h|}{|h|} = 1> \sqrt{\frac{2}{3}}.
\end{align}
This shows that for $y \in E' \times \R^{d-1}$, it holds $|Df|(y) > |\nabla_g f|_g(y)$. 

Finally, the test plan $\boldsymbol{\pi}$ shows, that $|Df|_w > |\nabla_g f|_g$ on a set of positive measure. Indeed, we note that 
\begin{align*}
    \int_{C([0,1], \R^d)} |f(\gamma_1)-f(\gamma_0)| \,\di \boldsymbol{\pi}(\gamma) = \int_{E} \frac{1}{\mathcal{L}^d(E)} \,\di \mathcal{L}^d = 1.
\end{align*}
On the other hand, we have that 
\begin{align*}
    \int_{C([0,1], \R^d)} \int_0^1 |\nabla_g f(\gamma_t)|_g |\dot{\gamma}_t| \, \di t \di \boldsymbol{\pi}(\gamma) \leq  \int_{E} \int_0^1 \sqrt{\frac{2}{3}} \frac{1}{\mathcal{L}^d(E)}\, \di t \di \mathcal{L}^d = \sqrt{\frac{2}{3}}.
\end{align*}
Hence, we have proved the following:
\smallskip

\begin{proposition}\label{first_counterexample}
    For $d \geq 2$, there exists a Riemannian metric $g$ on $\R^d$ such that $g, g^{-1} \in L^\infty$ and such that 
    \begin{itemize}
        \item[(i)] There exists a test plan that gives positive measure to curves for which the metric speed is strictly smaller than the norm of the derivative for almost every time $t \in [0,1]$. 
        \item[(ii)] There exists a function $f\in C^1(\R^d)$ for which the minimal weak upper gradient and the metric slope are strictly larger than the norm of the gradient on a set of positive measure, i.e., $|Df|, |Df|_w > |\nabla_g f|_g$ on a set of positive $\vol_g$-measure.
    \end{itemize}
\end{proposition}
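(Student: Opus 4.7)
The plan is to build a Riemannian metric on $\R^d$ that behaves as $2\,Id$ on a set of large measure while still admitting short detours through a dense open set where $g$ equals the identity. The construction proceeds in four steps, and every assertion in the proposition will fall out of one explicit length comparison.

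First, I would fix a small $\kappa \in (0, \tfrac18)$ and cover $\Q$ by a dense open set $O = \bigcup_i (q_i - \kappa/2^i,\, q_i + \kappa/2^i)$ of Lebesgue measure at most $2\kappa$. Setting $\theta = 2 - \mathbbm{1}_O$ and $g(x) = \theta(x_1)\,Id$ produces a metric with $Id \leq g \leq 2\,Id$ (so the regularity hypotheses and Assumption \ref{assumption_on_mfd}(ii) hold), and a direct measure-theoretic estimate shows that $\{\theta \geq 3/2\} \cap (0,1)$ has Lebesgue measure at least $1 - 4\kappa \geq \tfrac12$, yielding a fat set $E' \subset (0,1)$ where the metric is genuinely larger than Euclidean.

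The heart of the matter is the length comparison for vertical curves $\gamma^y_t = y + t e_2$: I claim that $\sfd_g(\gamma^y_t, \gamma^y_{t+h}) = h$ for all admissible $t, h$. The lower bound is immediate from $g \geq Id$. For the upper bound I would, given $\varepsilon > 0$, pick a rational $q = q_i$ within $\varepsilon$ of $x_1(y)$ and concatenate a three-leg piecewise linear curve: shift the first coordinate from $x_1(y)$ to $q$, travel vertically along the line $\{x_1 = q\}$, then shift back. Choosing the mollification scale $\varsigma \leq \kappa/2^{i+2}$ guarantees that the rational neighbourhood swallows the support of $\rho_\varsigma$, so $g_\varsigma \equiv Id$ on the vertical leg and the horizontal legs contribute at most $4\varepsilon$. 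Sending $\varsigma \to 0$ and invoking Norris's convergence $\sfd_{g_\varsigma} \to \sfd_g$, then $\varepsilon \to 0$, closes the argument.

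With the length identity in hand, both items of the proposition drop out. For (i), set $E = E' \times (0,1)^{d-1}$ and define a test plan $\boldsymbol{\pi}$ as the law of $y \mapsto \gamma^y$ for $y$ uniformly distributed on $E$; the bounded compression condition is straightforward because $\vol_g$ is comparable to $\mathcal{L}^d$, and the energy bound is immediate since $|\dot{\gamma}^y_t| = 1$ by the length identity. On the other hand $|\dot{\gamma}^y_t|_g = \sqrt{\theta(x_1(y))} \geq \sqrt{3/2}$ for $y \in E$, so the metric speed is strictly smaller than the pointwise $g$-norm along almost every curve in $\supp \boldsymbol{\pi}$. For (ii), take $f(x) = x_2 \in C^\infty(\R^d)$: on $E$ one has $|\nabla_g f|_g = 1/\sqrt{\theta} \leq \sqrt{2/3}$, while the length identity gives $|Df|(y) \geq \lim_{h \to 0} h/\sfd_g(y, y + h e_2) = 1$. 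Finally, plugging $f$ and $\boldsymbol{\pi}$ into the defining inequality \eqref{weakgradient} and using $\int |f(\gamma_1) - f(\gamma_0)|\,\di\boldsymbol{\pi} = 1 > \sqrt{2/3} \geq \int\!\int_0^1 |\nabla_g f|_g |\dot{\gamma}_t|\,\di t\,\di\boldsymbol{\pi}$ rules out $|\nabla_g f|_g$ as a weak upper gradient, forcing $|Df|_w > |\nabla_g f|_g$ on a set of positive $\vol_g$-measure.

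The one delicate step is the length comparison: the whole mechanism depends on the dyadic construction of $O$ being finely tuned to the mollification scale, so that the rational neighbourhoods remain wide enough, after convolution, to realise the Euclidean-length vertical leg at zero curvature cost. Once this identity is in place, the rest of the argument is bookkeeping with Fubini and the dominated convergence of $\sfd_{g_\varsigma}$.
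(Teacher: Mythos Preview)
Your proposal is correct and follows essentially the same construction and argument as the paper: the same dense open set $O$, the same metric $g=\theta(x_1)Id$, the same three-leg detour through a rational slice (with the mollification scale chosen so that $g_\varsigma\equiv Id$ on the vertical leg) to establish $\sfd_g(\gamma^y_t,\gamma^y_{t+h})=h$, and the same test plan and function $f(x)=x_2$ to extract both conclusions. Even the numerical thresholds ($\kappa<1/8$, $\sqrt{3/2}$, $\sqrt{2/3}$, $\varsigma\le\kappa/2^{i+2}$) match.
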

\subsection{An introductory example of non-infinitesimal Hilbertianity}\label{subsec:first_counter_inf_hilbertian}
In this subsection, we will more closely analyse the infinitesimal structure of a manifold similar to the previous example. 
This following example metric resembles the previous one, with the sole difference, that we only perturb the metric inside the interior of the unit cube $(0,1)^d\subset\R^d$ for $d \geq 2$ and leave it constant outside that set. We will prove that the resulting metric measure space is not infinitesimally Hilbertian.

Let $q_i \in \Q$ be a counting of $\Q \cap (0,1)$,  $\kappa \in (0, \frac{1}{8})$ and define 
\begin{align}\label{ex1_rational_cover_infH}
    O:= \bigcup_{i=1}^\infty \Big(q_i-\frac{\kappa}{2^i}, q_i + \frac{\kappa}{2^i}\Big).
\end{align}
Then $\mathcal{L}^1(O) \leq 2\kappa$ and $O$ is dense in $(0,1)$. Define
\begin{align*}
    &\theta:= (2 -\mathbbm{1}_{O}) \in L^\infty((0,1)), \\
    &g(x_1, \ldots, x_d):= 2\mathbbm{1}_{((0,1)^d)^c}\mathrm{Id} + \mathbbm{1}_{(0,1)^d}\theta(x_1)\mathrm{Id} \in L^\infty(\R^d, \R^{d\times d}).
\end{align*}
Note that $(\R^d, g)$ satisfies Assumption \ref{assumption_on_mfd}, $(\R^d, \sfd_g, \vol_g)$ satisfies \eqref{condition_on_measure_bounded_compression}, and $g$ is constant outside the precompact set $(0,1)^d$. Inside $(0,1)^d$, $g$ the metric space arising from $(M,g)$ will behave just as in the previous subsection, we only restrict this behaviour to a compact set to conclude the lack of infinitesimal Hilbertianity. 

To see that we will first quantitatively determine the difference between the metric speed of curves and the norms of their derivatives.

We start with a preparatory lemma:
\smallskip

\begin{lemma}\label{derivative_truncation}
    Let $f:[0,1] \to \R$ be a Lipschitz continuous function and $a < b \in \R$. Then 
    \begin{align}
        \int_{f^{-1}((a,b))} f' \, \di t \in [a-b, b-a].
    \end{align}
\end{lemma}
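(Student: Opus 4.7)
The plan is to reduce the statement to the fundamental theorem of calculus applied to the truncation of $f$ to the interval $[a,b]$.

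First I would define the truncated function $\tilde f := \max(a, \min(b, f))$. Since $f$ is Lipschitz and the composition of Lipschitz maps is Lipschitz, $\tilde f : [0,1] \to [a,b]$ is Lipschitz continuous, hence absolutely continuous and differentiable almost everywhere. In particular the fundamental theorem of calculus applies:
\begin{align*}
\tilde f(1) - \tilde f(0) \;=\; \int_0^1 \tilde f'(t)\,\di t.
\end{align*}
Since $\tilde f$ takes values in $[a,b]$, the left-hand side lies in $[a-b,\, b-a]$.

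The key step is to identify $\tilde f'$ with $f' \cdot \mathbbm{1}_{f^{-1}((a,b))}$ almost everywhere. On the open set $f^{-1}((a,b))$, one has $\tilde f = f$ in a neighbourhood of each point, so clearly $\tilde f' = f'$ there. On the complementary set $f^{-1}(\R \setminus (a,b))$, one argues via Stampacchia's lemma (a chain rule for Lipschitz functions): for any constant $c$, $f'(t)=0$ for almost every $t \in f^{-1}(\{c\})$, and similarly $\tilde f'(t) = 0$ almost everywhere on the level sets $\{\tilde f = a\} = f^{-1}((-\infty,a])$ and $\{\tilde f = b\} = f^{-1}([b,\infty))$, since $\tilde f$ is constant on these sets (up to a null set, using that any absolutely continuous function has zero derivative almost everywhere on the preimage of a point). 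Combining,
\begin{align*}
\int_0^1 \tilde f'(t)\,\di t \;=\; \int_{f^{-1}((a,b))} f'(t)\,\di t.
\end{align*}

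Putting the two pieces together yields the asserted bound
\begin{align*}
\int_{f^{-1}((a,b))} f' \,\di t \;=\; \tilde f(1) - \tilde f(0) \;\in\; [a-b,\, b-a].
\end{align*}
The only mildly delicate point is the Stampacchia-type identity for the derivative on the complementary set; everything else is an immediate consequence of the Lipschitz regularity of $f$ and the absolute continuity of $\tilde f$.
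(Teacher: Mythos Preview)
Your proof is correct and follows essentially the same approach as the paper: both define the truncation $h=\max(a,\min(b,f))$, identify $h'=f'$ on $f^{-1}((a,b))$ and $h'=0$ almost everywhere on the complement, and then apply the fundamental theorem of calculus together with $h(0),h(1)\in[a,b]$. The only cosmetic difference is that the paper proves the vanishing of $h'$ on the level sets $\{h=a\}$ and $\{h=b\}$ by an explicit Lebesgue-point argument, whereas you invoke the standard Stampacchia-type fact directly.
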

\begin{proof}
    We define the function $h:[0,1] \to \R$ as 
    \begin{align*}
        h(t):= \max(a, \min(f(t), b)).
    \end{align*}
    Then $h$ is Lipschitz continuous and admits a weak derivative $h'$. Moreover, in the open set $f^{-1}((a,b)) \subset [0,1]$, we have that $f=h$, hence $h' = f'$ almost everywhere in $f^{-1}((a,b))$.
    Now, on $f^{-1}((-\infty,a])$ we have that $h \equiv a$. 
    
    \textbf{Claim.} For almost every $t \in f^{-1}((-\infty,a])$, it holds $h' =0$. \\
    \textit{Proof of the claim.} Indeed, choose a point $t \in [0,1]$ that is a differentiability point of $h$ and that is a Lebesgue point of both $f^{-1}((-\infty,a])$ and $h'$. Suppose for a contradiction that $h'(t) \neq 0$. 
    Then there exists a neighbourhood $U \ni t$ such that $h(s) \neq h(t)$ for every $s \in U \setminus \{t\}$.
    But as $t$ is a Lebesgue point of $f^{-1}((-\infty,a])$, we have that $(f^{-1}((-\infty,a]) \cap U) \setminus \{t\} \neq \emptyset$. Hence, there exists a point in $s \in f^{-1}((-\infty,a])$ such that $h(s) \neq h(t)$, which contradicts the definition of $h$. As those Lebesgue points have full measure, this proves the claim.

    We similarly get that on $f^{-1}([b, \infty))$, it holds $h' \equiv 0$. Then
    \begin{align}
      \int_{f^{-1}((a,b))} f' \, \di t = \int_0^1 h' \, \di t = h(1)-h(0).  
    \end{align}
    The lemma follows as $h(0), h(1) \in [a,b]$.
\end{proof}

With Proposition \ref{metric_speed_depends_on_direction}, we will now determine the metric speed of a curve through a point $y \in \R^d$ depending on the derivative.

First suppose that $y \in (0,1)^d$ and that $y$ is a Lebesgue point of $\{g=2\}$. 
In this argument, we will work with the definition of the metric $\sfd_g$ via \eqref{null_set_def_of_metric}. 
We will at first determine the shape and length of an almost distance minimising curve between two points that are close to each other. 
Fix an $\e >0$. Then, by the Lebesgue differentiation theorem, there exists a $\delta > 0$ such that for each open interval $I \subset \R$ with $x_1(y) \in I$, and $\mathcal{L}^1(I) \leq \delta$, it holds
\begin{align}\label{delta:lebesgue_point_condition_ex1/4.2}
    \frac{1}{\mathcal{L}^1(I)}\int_I |\theta-2| \, \di \mathcal{L}^1 \leq \e. 
\end{align}

Fix the null set $N^{(1)}_g \subset \R$ as the set of non-Lebesgue points of $\theta$ and define $N_g := N^{(1)}_g \times \R^{d-1}$. Note that then $O \cap N^{(1)}_g = \emptyset$. 

Let $z \in (0,1)^d$ be such that $\sfd_{euc}(y, z) \leq \frac{1}{8}\dist_{euc}(y, \partial (0,1)^d)$. Then any $2$-shortest curve connecting them, i.e., any curve $\gamma:[0,1]\to \R^d$ from $y$ to $z$ such that $\gamma \perp N_g$ and $L_g(\gamma) \leq 2 \sfd_g(y, z)$, has to lie inside $(0,1)^d$. Let $\gamma:[0,1]\to (0,1)^d$ be a Lipschitz continuous curve transversal to $N_g$ such that $\gamma(0)=y$ and $\gamma(1)=z$.
We will now estimate $\sfd_g(y,z)$ depending on $z-y \in \R^d$. 
We assume that $x_1(z) \geq x_1(y)$ and $|x_1(z)-x_1(y)| < \delta$. The case $x_1(z) \leq x_1(y)$ is analogous.

\textbf{Step 1:} Reduction to curves with uniformly bounded $x_1$ coordinates.\\
Pick $a_y \leq x_1(y)$ and $a_{z} \geq x_1(z)$ such that $a_y, a_{z} \in O\cap (0,1)$, and
\begin{align}\label{truncation_interval}
    a_{z}-a_y \leq \delta, \quad x_1(y)-a_y \leq \e,\quad  a_{z}-x_1(z) \leq \e,\quad  a_{z}-a_y \leq 2|z-y|_{euc}. 
\end{align}
Now define $\Tilde{\gamma}:[0,1] \to \R^d$ via 
\begin{align}
    &x_1(\Tilde{\gamma})(t) = \max(a_y, \min( a_{z}, x_1(\gamma_t))), \nonumber \\
    &x_i(\Tilde{\gamma})(t) = x_i({\gamma}_t)\ \mathrm{for} \ i=2, \ldots, d.
\end{align}
Then $\Tilde{\gamma}$ is Lipschitz continuous and transversal to $N_g$. Indeed, on $(x_1 \circ \gamma)^{-1}((-\infty, a_y] \cup [a_{z}, \infty))$, we get that $\Tilde{\gamma} \in \{a_y, a_{z}\} \times (0,1)^{d-1} \subset N^c_g \cap \{g \equiv \mathrm{Id}\}$ and on $(x_1 \circ \gamma)^{-1}((a_y, a_{z}))$, we have that $\gamma = \Tilde{\gamma}$. Note that the methods from the proof of Lemma \ref{derivative_truncation} yield that
\begin{align}
 |\dot{\Tilde{\gamma}}_t|_{euc} \leq |\dot{\gamma}_t|_{euc} \ \mathrm{a.e.\ on} \ [0,1].
\end{align}
Now, we can compute 
\begin{align}\label{estimate_length_gamma_1}
    L_g(\gamma) &= \int_{[0,1]} \sqrt{\theta(x_1(\gamma_t))} |\dot{\gamma}_t|_{euc} \, \di t \nonumber \\
    &= \int_{(x_1 \circ \gamma)^{-1}((a_y, a_{z}))} \sqrt{\theta(x_1(\gamma_t))} |\dot{\gamma}_t|_{euc} \, \di t+ \int_{(x_1 \circ \gamma)^{-1}((-\infty, a_y]\cup [ a_{z}, \infty))} \sqrt{\theta(x_1(\gamma_t))} |\dot{\gamma}_t|_{euc} \, \di t \nonumber \\
    & \geq \int_{(x_1 \circ \gamma)^{-1}((a_y, a_{z}))} \sqrt{\theta(x_1(\gamma_t))} |\dot{\gamma}_t|_{euc} \, \di t+ \int_{(x_1 \circ \gamma)^{-1}((-\infty, a_y]\cup [ a_{z}, \infty))} |\dot{\Tilde{\gamma}}_t|_{euc} \, \di t\nonumber \\
    & = \int_{(x_1 \circ \gamma)^{-1}((a_y, a_{z}))} \sqrt{\theta(x_1(\Tilde{\gamma}_t))} |\dot{\Tilde{\gamma}}_t|_{euc} \, \di t+ \int_{(x_1 \circ \gamma)^{-1}((-\infty, a_y]\cup [ a_{z}, \infty))} \sqrt{\theta(x_1(\Tilde{\gamma}_t))} |\dot{\Tilde{\gamma}}_t|_{euc} \, \di t \nonumber\\
    &= \int_{[0,1]} \sqrt{\theta(x_1(\Tilde{\gamma}_t))} |\dot{\gamma}_t|_{euc} \, \di t=L_g(\Tilde{\gamma}).
\end{align}
Hence, we found a curve $\Tilde{\gamma}$ with the same endpoints as $\gamma$ that is transversal to $N_g$ with bounded $x_1$ coordinates that is at most as long as $\gamma$. From now on, we write $\gamma:= \Tilde{\gamma}$ with a slight abuse of notation. 

\textbf{Step 2.} Reduction to the length of curves consisting of two straight lines. \\
Define
\begin{align*}
    &v_{(1)}:= \int_{(\theta \circ x_1 \circ \gamma)^{-1}(1)}\dot{\gamma}_t \, \di t \in \R^d,\\
    &v_{(2)}:= \int_{(\theta \circ x_1 \circ \gamma)^{-1}(2)}\dot{\gamma}_t \, \di t \in \R^d.
\end{align*}
Note that $v_{(1)} + v_{(2)} = {z}-y$ and by classical results about the Euclidean space it holds  
\begin{align}\label{estimate_length_of_tilde_gamma_2}
    L_g({\gamma}) &= \int_{[0,1]} \sqrt{\theta(x_1({\gamma}_t))} |\dot{\gamma}_t|_{euc} \, \di t \nonumber\\
    & = \int_{(\theta \circ x_1 \circ \gamma)^{-1}(2)} \sqrt{2}|\dot{\gamma}_t|_{euc} \, \di t + \int_{(\theta \circ x_1 \circ \gamma)^{-1}(1)}|\dot{\gamma}_t|_{euc} \, \di t  \nonumber\\
    & \geq \sqrt{2}|v_{(2)}|_{euc} + |v_{(1)}|_{euc}. 
\end{align}
Write $O \cap (a_y, a_{z}):= \bigcup_{n\in \N} I_n$, where $I_n$ are pairwise disjoint open intervals. By our choice of $\delta$ (see \eqref{delta:lebesgue_point_condition_ex1/4.2} and \eqref{truncation_interval}), we get that $\mathcal{L}^1(O \cap (a_y, a_{z})) \leq \e(a_{z}-a_y)$.
Now, by Lemma \ref{derivative_truncation}, we get the following estimate on the first component $x_1(v_{(1)})$ of $v_{(1)} = \sum_{i=1}^d x_i(v_{(1)}) e_i$. 
\begin{align*}
    |x_1(v_{(1)}) |=\Big| \sum_n \int_{(x_1 \circ \gamma)^{-1}(I_n)} \dot{(x_1 \circ \gamma)}(t) \di t \big| \leq \sum_n \mathcal{L}^1(I_n) \leq \e|a_{z}-a_y| \leq 2\e|z-y|_{euc}.
\end{align*}
Now define 
\begin{align*}
    w_{(1)}:= v_{(1)}- x_1(v_{(1)}) e_1 \in \{0\} \times \R^{d-1}, \ w_{(2)} = v_{(2)}+ x_1(v_{(1)}) e_1 \in \R^d. 
\end{align*}
Then $w_{(1)}+w_{(2)}= z-y$ and 
\begin{align*}
    |w_{(1)}-v_{(1)}|_{euc}, |w_{(2)}-v_{(2)}|_{euc} \leq 2\e|z-y|_{euc}. 
\end{align*}
Hence 
\begin{align}\label{length_estimate_w_vectors}
     L_g({\gamma}) \geq \sqrt{2}|w_{(2)}|_{euc} + |w_{(1)}|_{euc}- 6\e|z-y|_{euc},
\end{align}
where $w_{(1)}+w_{(2)}= z-y$ and $w_{(1)} \in \mathrm{span}(e_2, \ldots e_d)$. 

\textbf{Step 3.} Optimisation of $w_{(1)}$ and $w_{(2)}$ under the above constraints. \\
By potentially applying a linear transform from $\mathrm{SO}(d-1)$ that rotates the last $d-1$ entries, we may assume that $z-y \in \mathrm{span}(e_1, e_2)$.
We reduce the right hand side of \eqref{length_estimate_w_vectors}, if we orthogonally project both $w_{(1)}$ and $w_{(2)}$ onto $\mathrm{span}(e_1, e_2)$, so we may assume that  $w_{(1)},w_{(2)} \in \mathrm{span}(e_1, e_2)$. 
Now, if $z-y= ae_1$, for some $a \in \R$ we get that $w_{(1)}=0$ and hence
\begin{align}\label{estimate_x_1_direction_eps}
    L_g({\gamma}) \geq \sqrt{2}a -6a\e.
\end{align}
Otherwise, we may write $z-y = a(be_1+e_2)$, where $a, b \in \R$, $a \neq 0$.
Write $w_{(1)} = a(1-c)e_2$ and $w_{(2)} = a(be_1+ ce_2)$.
Now, we want to choose the optimal parameter $c \in \R$ such that
\begin{align*}
    \sqrt{2}|w_{(2)}|_{euc} + |w_{(1)}|_{euc},
\end{align*}
that is \eqref{length_estimate_w_vectors} without the error term, becomes minimal under the constraint that $w_{(1)} \in \mathrm{span}(e_2)$.
The error term in \eqref{length_estimate_w_vectors} is independent of $c$, hence this reduces to finding $c \in \R$ such that 
\begin{align}\label{one_dim_miminisation_problem}
    f_b(c):=|1-c| + \sqrt{2}\sqrt{b^2+c^2}
\end{align}
is minimised. From the definition of the function one can see that for $c \leq 0$ it holds $f_b(c) \geq f_b(0)$. Similarly, if $c \geq 1$, we get that $f_b(c) \geq f_b(1)$. Hence we only need to minimise for $c \in (0,1)$, in which $f_b$ is continuously differentiable and check the boundary values.
We have that 
\begin{align*}
    f_b(0) = 1+\sqrt{2}|b|, f_b(1) = \sqrt{2(1+b^2)}.
\end{align*}
Now the derivative for $c \in (0,1)$ is given by
\begin{align*}
    \frac{\di}{\di c}f_b(c) = -1 +\frac{\sqrt{2}c}{\sqrt{b^2+c^2}}.
\end{align*}
The derivative can only vanish if  $\sqrt{2}c_* = \sqrt{b^2+c_*^2} \iff c_*= |b|$.
Now, if $|b| \leq 1$, we get that $c_* \in [0,1]$ and 
\begin{align*}
    f_b(c_*) = 1-|b| + \sqrt{2(b^2+b^2)} = 1+|b|.
\end{align*}
Then, we get that $f_b(0), f_b(1) \geq f_b(c_*)$.
If $|b| \geq 1$, we only need to check the boundary values and get that $f_b(0) \geq f_b(1)$.
Hence, for $|b|\geq 1$ the optimal value is $\sqrt{2(b^2+1)}$ and for $|b| \leq 1$, the optimal value is $1+|b|$.

\textbf{Step 4.} Description of the metric speed of a curve depending on its derivative $u$.\\
Recall that we chose $y$ to be a Lebesgue point of $\{g=2\}$.
Writing $z-y = u = (u_1, u')$ where $u_1 \in x_1(y)+(-\delta, \delta) \subset \R$ and $u' \in \R^{d-1}$, the previous three steps yield that
\begin{align*}
    L_g(\gamma) \geq \left\{ \begin{array}{ll}
           |u_1| + |u'|_{euc} - 6\e|u|_{euc} & \mathrm{if}\ |u_1| \leq |u'|_{euc}, \\
           \sqrt{2}|u|_{euc}- 6\e|u|_{euc}\  & \mathrm{otherwise}.
        \end{array}\right.
\end{align*}
More precisely, using that $\sfd_g \geq \boldsymbol{\delta}_{N_g}$, we get that for $u$ as above,  it holds
\begin{align}\label{asymptotic_lower_bound_metric_ex1}
    \sfd_g(y, y+u) \geq \left\{ \begin{array}{ll}
           |u_1| + |u'|_{euc} - 6\sigma(y, u_1)|u|_{euc} & \mathrm{if}\ |u_1| \leq |u'|_{euc}, \\
           \sqrt{2}|u|_{euc}- 6\sigma(y, u_1)|u|_{euc}\  & \mathrm{otherwise},
        \end{array}\right.
\end{align}
where 
\begin{align}
    \sigma(y, u_1) := \inf\{\e>0, \Big|\frac{1}{|I|}\int_I \theta \, \di x-2\Big| \leq \e, \forall I\subset \R\ \mathrm{open}, x_1(y) \in I, \mathcal{L}^1(I)< |u_1|\}.
\end{align}
As $x_1(y)$ is chosen as a Lebesgue point of $\theta$, we get that $\sigma(y, u_1) \to 0$ as $|u_1|\to 0$. 

We will next prove an upper bound on $\sfd_g(y,z)$ by working with the regularisation, i.e., smooth metrics $g_\epsilon$ that converge to $g$ almost everywhere and such that $\sfd_{g_\epsilon} \to \sfd_g$. We will construct curves $\hat{\gamma}$ that are given by the concatenation of four straight lines, two of which will be arbitrarily short. The remaining two lines will be optimal in the sense of steps 2 and 3. 
Fix $\epsilon >0$. For any $\varsigma >0$ and $g_\varsigma := \rho_{\varsigma} * g$, it holds $g_\varsigma \leq 2 \mathrm{Id}$. Hence, for all $\varsigma>0$, we have that
\begin{align*}
    \sfd_{g_\varsigma}(y,z) \leq \sqrt{2}|u|_{euc}.
\end{align*}
Moreover, we can find a rational $q \in \Q\cap (0,1)$ such that $|q-x_1(y)| \leq \epsilon|u|_{euc}$. 
Fix a $0<\alpha < \beta <1$ such that $U:= (\alpha,\beta)^{d-1}$ satisfies that $y, z \in (0,1) \times U$. 
Now for some $\varsigma_\epsilon >0$ small enough, we have that for $\varsigma \in (0, \varsigma_\epsilon)$, it holds $g_\varsigma =\mathrm{Id}$ on $\{q\} \times U$. If $|u_1|< |u'|_{euc}$, write $b= \frac{u_1}{|u'|_{euc}}$. Then define the curves
\begin{align*}
    &\gamma_1:[0,1] \to \R^d, t \mapsto y+t(q-x_1(y))e_1, \\
    &\gamma_2:[0,1] \to \R^d, t \mapsto y+(q-x_1(y))e_1 + t(1-|b|)u', \\
    &\gamma_3:[0,1] \to \R^d, t \mapsto  y+(q-x_1(y))e_1 + (1-|b|)u'-t(q-x_1(y))e_1, \\
    &\gamma_4:[0,1] \to \R^d, t \mapsto  y + (1-|b|)u' + t|b|u'+tu_1e_1.
\end{align*}
We then get that 
\begin{align*}
    L_{g_\varsigma}(\gamma_1), L_{g_\varsigma}(\gamma_3) \leq \sqrt{2}|q-x_1(y)| \leq \sqrt{2}\epsilon|u|_{euc}.
\end{align*}
Moreover as $g_\varsigma \equiv \mathrm{Id}$ on $\{q\} \times U \supset \gamma_2([0,1])$, we get that 
\begin{align*}
     L_{g_\varsigma}(\gamma_2) = (1-|b|)|u'|_{euc}.
\end{align*}
Finally, we get that 
\begin{align*}
 L_{g_\varsigma}(\gamma_4) \leq \sqrt{2}\big|bu'+ u_1e_1\big|_{euc}= 2|u_1|.
\end{align*}
Hence, for all $\varsigma$ small enough, the concatenation $\hat{\gamma}$ of the four curves satisfies
\begin{align}
    \sfd_{g_\varsigma}(y, z) \leq L_{g_\varsigma}(\hat{\gamma}) \leq 3\epsilon|u|_{euc} + \frac{|u'|_{euc}-|u_1|}{|u'|_{euc}}|u'|_{euc} + 2|u_1| = |u_1| + |u'|_{euc} +3\epsilon|u|_{euc}.
\end{align}
Hence, using $\sfd_g = \lim_{\varsigma \to 0} \sfd_{g_\varsigma}$ and the fact that $\epsilon$ was independent of $|u_1|$, sending $\epsilon \to 0$ yields that 
\begin{align}\label{upper_bound_metric_ex1}
    \sfd_g(y, y+u) \leq \alpha(u):= \left\{ \begin{array}{ll}
           |u_1| + |u'|_{euc}  & \mathrm{if}\ |u_1| \leq |u'|_{euc}, \\
           \sqrt{2}|u|_{euc}\  & \mathrm{otherwise}.
        \end{array}\right.
\end{align}

Now considering the curve $\gamma^{y,u}:[0,1] \to \R^d, t \mapsto y+tu$, for $u \in \R^d$, \eqref{upper_bound_metric_ex1} and \eqref{asymptotic_lower_bound_metric_ex1} give that 
\begin{align*}
    0 \leq \lim_{h \to 0} \frac{\alpha(hu)-\sfd_g(\gamma^{y,u}_0, \gamma^{y,u}_h)}{|h|} \leq \lim_{h \to 0} \frac{\sigma(y, hu_1)|hu|_{euc}}{|h|} = \lim_{h \to 0} \sigma(y, hu_1)|u|_{euc} =0.
\end{align*}
Hence
\begin{align*}
    \lim_{h \to 0} \frac{\sfd_g(\gamma^{y,u}_0, \gamma^{y,u}_h)}{|h|} = \lim_{h \to 0} \frac{\alpha(hu)}{|h|} = \alpha(u).
\end{align*}
This completes the description of the metric speed of curves through points $y$ that are Lebesgue points of $\{g =2\}$ depending on the derivative $u$. This can be seen as the explicit expression of what is called the ``derivative" of the distance function in \cite{de1995lip}.

For any point $y \in (0,1)^d$ such that $x_1(y) \in O$, we have that $g \equiv \mathrm{Id}$ in a neighbourhood of $y$. Let $\gamma:[0,1] \to M$ be an absolutely continuous curve through $y$ and fix $t \in [0,1]$ such that $\gamma_t =y$. Suppose that $\gamma$ is differentiable at time $t$. The metric speed $|\dot{\gamma}_t|$ of $\gamma$ at time $t$  depends only on the values of $\gamma$ at an arbitrarily small neighbourhood $I \subset [0,1]$ of $t$. The continuity of $\gamma$ allows to assume $\gamma(I) \subset O \times \R^{d-1}$. As $g$ is constant, and hence smooth, in $O \times \R^{d-1}$, the results on smooth Riemannian metrics (see also \cite[Proposition 4.10]{burtscher2012length}) show that in this case $|\dot{\gamma}_t| = |\dot{\gamma}_t|_{g(y)} = |\dot{\gamma}_t|_{euc}$. 

Similarly, for every $y \in ([0,1]^d)^c$, we have that $g \equiv 2 \mathrm{Id}$ in a neighbourhood of $y$. Hence, for every absolutely continuous curve $\gamma$ through $y$ with $t$ as above, we have that $|\dot{\gamma}_t| = |\dot{\gamma}_t|_{g(y)} = \sqrt{2}|\dot{\gamma}_t|_{euc}$.

Now note that for $\mathcal{L}^1$-almost all $x \in \R$, it holds that $x \in O$ or $x$ is a Lebesgue point of $\{\theta =2\}$. 
Write $N^{(2)} \subset \R$ for the null set of points satisfying neither of that and define the $\mathcal{L}^d$-null set $N_2 := N^{(2)} \times \R^{d-1} \cup (\partial (0,1)^d)$.
In the spirit of Proposition \ref{metric_speed_depends_on_direction}, we have now for almost all points $y$, more precisely all points $y \in \R^d \setminus N_2$ characterised the metric speed of a curve through the point $y$ depending on its derivative at $y$. In particular, we have shown that for almost every $y$, we have that if an absolutely continuous curve $\gamma:[0,1] \to \R^d$ passes through $y$ (say at time $t \in [0,1]$) and is differentiable at $t$, then the metric speed $|\dot{\gamma_t}|$ exists and is given by $|\dot{\gamma}_t|_{euc}$ if $y \in (0,1)^d$ and $x_1(y) \in O$, $\alpha(\dot{\gamma}_t)$ if $y \in (0,1)^d$ and $x_1(y) \in \R \setminus (N^{(2)} \cup O)$, and $\sqrt{2}|\dot{\gamma}_t|_{euc}$ if $y \in ([0,1]^{d})^c$.  

\textbf{Step 5.} Example of functions that shows that the Cheeger energy does not satisfy the parallelogram identity.

This is the final step of the argument.
Consider the smooth functions $f_1, f_2, f_3, f_4: M = \R^d \to \R$ given by
\begin{align*}
    f_1: x \mapsto x_1, f_2: x \mapsto x_2, f_3 := f_1+f_2, \ \mathrm{and} \ f_4 := f_1-f_2. 
\end{align*}
Now again pick a point $y\in (0,1)^d$ such that $x_1(y)$ is a Lebesgue point of $\{\theta=2\}$. Pick a vector $0 \neq u \in \R^d$. Denote by $u_2$ the second component of $u$. Now, we may compute
\begin{align*}
    \partial_uf_1 = u_1, \partial_uf_2 = u_2, \partial_uf_3 = u_1+u_2, \partial_u f_4 = u_1-u_2.
\end{align*}
Then \begin{align*}
    \lim_{h \to 0} \frac{|f_i(\gamma^{y,u}_h) -f_i(\gamma^{y,u}_0)|}{\sfd_g(\gamma^{y,u}_0, \gamma^{y,u}_h)} = \frac{|\partial_u f_i|(\gamma_0^{y,u})}{\alpha(u)}, \ \mathrm{ for\ } 1 \leq i \leq 4.
\end{align*}
Now if $|u_1|\geq |u'|_{euc}$, we have that $\alpha(u) = \sqrt{2}|u|_{euc}$, hence
\begin{align}
    &\frac{|\partial_u f_1|}{\alpha(u)} = \frac{|u_1|}{\sqrt{2}|u|_{euc}} \leq \frac{|u_1|}{\sqrt{2}|u_1|}= \frac{1}{\sqrt{2}},\label{case_1f_1}\\
    &\frac{|\partial_u f_2|}{\alpha(u)} = \frac{|u_2|}{\sqrt{2}|u|_{euc}} \leq \frac{|u_2|}{\sqrt{2}|u_2|}= \frac{1}{\sqrt{2}},\label{case_1f_2}\\
    &\frac{|\partial_u f_3|}{\alpha(u)} = \frac{|u_1+u_2|}{\sqrt{2}|u|_{euc}} \leq \frac{|u_1|+|u_2|}{\sqrt{2(u_1^2+u_2^2)}}\leq 1,\label{case_1f_3}\\
    &\frac{|\partial_u f_4|}{\alpha(u)} = \frac{|u_1-u_2|}{\sqrt{2}|u|_{euc}} \leq \frac{|u_1|+|u_2|}{\sqrt{2(u_1^2+u_2^2)}}\leq 1\label{case_1f_4},
\end{align}
Note that \eqref{case_1f_3} and \eqref{case_1f_4} follow from the fact that the quadratic mean is greater than or equal to the arithmetic mean. Moreover, we note that the equality in \eqref{case_1f_1} can be achieved by choosing $u= u_{(1)}:=e_1$. 

If $|u_1|< |u'|_{euc}$, we have that $\alpha(u) = |u_1|+|u'|_{euc}$, hence
\begin{align}
    &\frac{|\partial_u f_1|}{\alpha(u)} = \frac{|u_1|}{|u_1|+|u'|_{euc}} \leq \frac{|u_1|}{2|u_1|+(|u'|_{euc}-|u_1|)}\leq \frac{1}{2},\label{case_2f_1}\\
    &\frac{|\partial_u f_2|}{\alpha(u)} = \frac{|u_2|}{|u_1|+|u'|_{euc}} \leq \frac{|u_2|}{|u_2|}= 1,\label{case_2f_2}\\
    &\frac{|\partial_u f_3|}{\alpha(u)} = \frac{|u_1+u_2|}{|u_1|+|u'|_{euc}} \leq \frac{|u_1|+|u_2|}{|u_1|+|u_2|}= 1,\label{case_2f_3}\\
    &\frac{|\partial_u f_4|}{\alpha(u)} = \frac{|u_1-u_2|}{|u_1|+|u'|_{euc}} \leq \frac{|u_1|+|u_2|}{|u_1|+|u_2|}= 1\label{case_2f_4},
\end{align}
Here, in \eqref{case_2f_1}, we used that $|u_1| < |u'|_{euc}$ which implies that $|u'|_{euc}-|u_1|>0$. In \eqref{case_2f_2}, \eqref{case_2f_3}, and \eqref{case_2f_4}, one can see that $u=u_{(2)}:= e_2$, $u=u_{(3)}:=e_1+e_2$, and $u=u_{(4)}:=e_1-e_2$ yield equality.
We now define the functions $G_1, G_2, G_3, G_4: \R^d \to [0, \infty]$ via
\begin{align}\label{min_weak_upper_grad_def_ex1}
    &G_1 := \frac{1}{\sqrt{2}} + \frac{\sqrt{2}-1}{\sqrt{2}}{\mathbbm{1}_{ O\times (0,1)^{d-1}}},\nonumber\\
    & G_2=\frac{1}{\sqrt{2}}+ \frac{\sqrt{2}-1}{\sqrt{2}}\mathbbm{1}_{(0,1)^d},\nonumber \\
    & G_3=G_4 = 1+(\sqrt{2}-1)\mathbbm{1}_{O \times (0,1)^{d-1}}.
\end{align}
To summarise the above definitions more easily, note that for $i=1, \ldots, 4$, we have that in $([0,1]^d)^c \cup (O \times (0,1)^{d-1})$ it holds $G_i = |\nabla_g f_i|_g$ and in $((0,1)\setminus O) \times (0,1)^{d-1}$ we have that $G_i$ equals the respective upper bound established in \eqref{case_1f_1}-\eqref{case_2f_4}.
\smallskip

\begin{proposition}\label{weak_upper_gradient_ex1}
    For $1 \leq i \leq4$, it holds that $G_i$ is a weak upper gradient of $f_i$.
\end{proposition}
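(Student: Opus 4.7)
The plan is to fix an arbitrary test plan $\boldsymbol{\pi}$ and verify \eqref{weakgradient} by reducing the global inequality to a pointwise bound along curves, which can then be read off from the preparatory metric-speed analysis performed earlier in this subsection. Since each $f_i$ is linear, for every absolutely continuous $\gamma$ the fundamental theorem of calculus gives
\begin{align*}
|f_i(\gamma_1) - f_i(\gamma_0)| \leq \int_0^1 |df_i(\dot\gamma_t)|\,\di t,
\end{align*}
so after integrating against $\boldsymbol{\pi}$ and applying Fubini it suffices to show the pointwise inequality
\begin{align*}
|df_i(\dot\gamma_t)| \leq G_i(\gamma_t)\,|\dot\gamma_t| \quad \text{for $\boldsymbol{\pi}\otimes \mathcal{L}^1$-a.e.\;$(\gamma,t)$.}
\end{align*}
The integrability $\int\!\int G_i |\dot\gamma|\,\di t\,\di\boldsymbol{\pi} < \infty$ is automatic, since $G_i \leq \sqrt{2}$ and $\boldsymbol{\pi}$ has finite kinetic energy.

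Next, I would identify the ``good'' points where the pointwise bound can be checked. By Lemma \ref{testplans_on_transversal}, $\boldsymbol{\pi}$-a.e.\ $\gamma$ is transversal to the $\vol_g$-null set $N_2$, so for a.e.\ $t$ the point $y := \gamma_t$ lies in one of three disjoint regions: (a) $y \in ((0,1)^d)^c$, where $g \equiv 2\,Id$ locally; (b) $y \in O \times (0,1)^{d-1}$, where $g \equiv Id$ locally; or (c) $y \in ((0,1)\setminus O) \times (0,1)^{d-1}$ with $x_1(y)$ a Lebesgue point of $\{\theta = 2\}$. Writing $u := \dot\gamma_t$ (which exists a.e.), Proposition \ref{metric_speed_depends_on_direction} identifies $|\dot\gamma_t|$ with the metric speed at $s=0$ of the straight line $\gamma^{y, u}: s \mapsto y + su$. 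This metric speed equals $\sqrt{2}|u|_{euc}$ in (a) and $|u|_{euc}$ in (b), because $g$ is locally constant there, and equals $\alpha(u)$ in (c) by the limit computation made at the end of the derivation of \eqref{upper_bound_metric_ex1} and \eqref{asymptotic_lower_bound_metric_ex1}.

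Finally, the pointwise bound reduces to a case check. In (a) and (b) the metric is a constant multiple of the identity, so Cauchy--Schwarz gives $|df_i(u)| \leq |\nabla_g f_i|_g(y)\,|u|_g$, and one verifies by inspection that the value of $G_i$ prescribed in \eqref{min_weak_upper_grad_def_ex1} coincides with $|\nabla_g f_i|_g$ on these regions (for instance, in (a) one has $G_1(y) = 1/\sqrt{2} = |\nabla_g f_1|_g(y)$, and likewise for the other $f_i$). In (c), the inequality $|df_i(u)| \leq G_i(y)\,\alpha(u)$ is exactly the content of \eqref{case_1f_1}--\eqref{case_2f_4}: the suprema of $|df_i(u)|/\alpha(u)$ over directions $u$ are $1/\sqrt{2}$ for $i = 1$ and $1$ for $i = 2,3,4$, which are precisely the values taken by $G_i$ on (c), since there $y \notin O \times (0,1)^{d-1}$ but $y \in (0,1)^d$. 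The main organisational point to watch, which is also the principal obstacle the preceding analysis was designed to overcome, is that each $G_i$ must simultaneously dominate the sharp directional bound in both regimes $|u_1| \leq |u'|_{euc}$ and $|u_1| > |u'|_{euc}$; the definition \eqref{min_weak_upper_grad_def_ex1} is already calibrated to the worst case of \eqref{case_1f_1}--\eqref{case_1f_4} and \eqref{case_2f_1}--\eqref{case_2f_4}, so once the metric-speed identification is in place the inequality follows and \eqref{weakgradient} is proved.
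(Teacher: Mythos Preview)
Your proposal is correct and follows essentially the same route as the paper: reduce via the fundamental theorem of calculus to a pointwise bound $|df_i(\dot\gamma_t)|\leq G_i(\gamma_t)|\dot\gamma_t|$, use Lemma~\ref{testplans_on_transversal} to restrict to curves transversal to the null set, and then verify the bound region by region---Cauchy--Schwarz in the locally constant regions (a)/(b) and the directional estimates \eqref{case_1f_1}--\eqref{case_2f_4} together with the metric-speed identification $|\dot\gamma_t|=\alpha(\dot\gamma_t)$ in region (c). The only cosmetic discrepancy is that in (a) you write $((0,1)^d)^c$, which literally includes $\partial(0,1)^d$ where $g$ is \emph{not} locally constant; since $\partial(0,1)^d\subset N_2$ this is harmless, but strictly speaking the open set on which $g\equiv 2Id$ is $([0,1]^d)^c$.
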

\begin{proof}
    Let $\gamma:[0,1] \to M$ be absolutely continuous and transversal to a $\mathcal{L}^d$-null set $N \supset N_g \cup N_2$. As $f_i \in C^1$, we may apply the chain rule. We have that for almost every $t \in [0,1]$, it holds that $\gamma_t \in ([0,1]^d)^c \cup (O \times (0,1)^{d-1})$ or $\gamma_t \in ((0,1)\setminus O) \times (0,1)^{d-1}$ and $t$ is a differentiability point of $\gamma$.
    In the first case, we get that 
\begin{align}\label{derivative_function_along_curve_cts_area}
    \Big|\frac{\di}{\di s} f_i\circ \gamma \Big|_{s=t} \Big|= |\partial_{\dot{\gamma}_t}f_i(\gamma_t)| = |\langle \nabla_gf_i(\gamma_t), \dot{\gamma}_t\rangle_g| \leq |\nabla_g f_i|_g(\gamma_t) |\dot{\gamma}_t|_g = G_i(\gamma_t) |\dot{\gamma}_t|.
\end{align}  
In the second case, we get that either $\dot{\gamma}_t=0$, in which case $\frac{\di}{\di s} f_i\circ \gamma \big|_{s=t}=0$ or
\begin{align}\label{derivative_function_along_curve}
    \Big|\frac{\di}{\di s} f_i\circ \gamma \Big|_{s=t} \Big|= |\partial_{\dot{\gamma}_t}f_i(\gamma_t)| = \frac{|\partial_{\dot{\gamma}_t}f_i(\gamma_t)|}{\alpha(\dot{\gamma}_t)}\alpha(\dot{\gamma}_t)\leq G_i(\gamma_t)|\dot{\gamma}_t|.
\end{align}
Hence, for any transversal curve, \eqref{derivative_function_along_curve_cts_area} and \eqref{derivative_function_along_curve} together yield that  
\begin{align}
    |f_i(\gamma_1)-f_i(\gamma_0)|\leq \int_0^1 |\partial_{\dot{\gamma}_t}f_i(\gamma_t)|\,\di t  \leq \int_0^1 G_i(\gamma_t)|\dot{\gamma}_t|\, \di t,
\end{align}
where in the last step we used \eqref{upper_bound_metric_ex1} and \eqref{case_1f_1}-\eqref{case_2f_4}.
Now, we can conclude with Lemma \ref{testplans_on_transversal}.
\end{proof}
We next want to see that these are indeed the minimal weak upper gradients. For that we need the following result from \cite[Lemma 4.23]{mondino2025equivalence}. 
\smallskip

\begin{lemma}\label{Lebesguepointsadvanced_c} 
Let $k \in L^1_{\rm{loc}}(\R^d, \mathcal{L}^d)$, $x \in \R^d$, and $r > 0$.
For $\delta \in (0, r)\cap \Q$, $v \in B_r^{euc}(0)$,  we define 
    \begin{align*}
        F_{v, \delta, x}: t \to \frac{1}{\mathcal{L}^d(B_\delta(x))}\int_{B_\delta(x)} k(y + tv)\,\di\mathcal{L}^d(y), \quad \forall t, \;|t|< 1.
    \end{align*}
    Then for $\mathcal{L}^d$-a.e.\;$x \in \R^d$ and all {$\delta \in (0, r)\cap \Q$}, $v \in B_r^{euc}(0)$, we have that $t=0$ is a Lebesgue point of $ F_{v, \delta, x}$.
\end{lemma}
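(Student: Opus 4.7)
The plan is to show that $F_{v,\delta,x}$ is in fact continuous as a function of $t$ on $(-1,1)$ for every choice of $x$, $\delta$, $v$, not just for $\mathcal{L}^d$-a.e.\ $x$. Once continuity is established, $t=0$ is automatically a Lebesgue point (every continuity point of an $L^1_{\rm loc}$ function is a Lebesgue point), so the statement becomes immediate. The key observation is that after a translation of the integration variable, $F_{v,\delta,x}(t)$ is just the average of $k$ over a ball whose centre moves linearly in $t$, i.e.\ a convolution of $k$ with a normalised indicator evaluated along the affine line $t \mapsto x+tv$.

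First I would perform the change of variables $z = y+tv$ in the defining integral. Since the Jacobian is $1$ and $y \in B_\delta(x)$ corresponds exactly to $z \in B_\delta(x+tv)$, this yields
\begin{align*}
    F_{v,\delta,x}(t) = \frac{1}{\mathcal{L}^d(B_\delta(x))}\int_{B_\delta(x+tv)} k(z)\,\di\mathcal{L}^d(z) = (k * \phi_\delta)(x+tv),
\end{align*}
where $\phi_\delta := \mathcal{L}^d(B_\delta(0))^{-1}\mathbbm{1}_{B_\delta(0)}$. Second, I would verify that $k * \phi_\delta$ is a continuous function on $\R^d$: this is the standard fact that the convolution of an $L^1_{\rm loc}$ function with a bounded, compactly supported function is continuous, and is proved by dominated convergence using that $\phi_\delta$ is continuous off the null set $\partial B_\delta(0)$. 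Third, composing with the continuous map $t\mapsto x+tv$ gives continuity of $F_{v,\delta,x}$ on $(-1,1)$, and hence $t=0$ is a Lebesgue point.

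The main technical point is the continuity of $k*\phi_\delta$, since $\phi_\delta$ is discontinuous on an entire sphere and $k$ is only locally integrable. The argument is local: for $w$ in a bounded set, the values $(k*\phi_\delta)(w)$ depend only on $k$ restricted to a slightly larger bounded set, so we may assume $k \in L^1$. Then for $w_n \to w$, the integrand $k(y)\phi_\delta(w_n - y)$ is dominated by $\|\phi_\delta\|_\infty|k(y)|\mathbbm{1}_{K}(y)$ on a suitable compact set $K$ and converges pointwise to $k(y)\phi_\delta(w-y)$ for a.e.\ $y$ (namely, whenever $w-y \notin \partial B_\delta(0)$, which excludes only a null set). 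Dominated convergence gives $(k*\phi_\delta)(w_n) \to (k*\phi_\delta)(w)$. This is the only substantive step, and once it is in place the rest of the proof is a direct composition, yielding the statement uniformly in $\delta \in (0,r)\cap\Q$ and $v \in B_r^{euc}(0)$.
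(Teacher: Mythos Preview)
Your proof is correct, and in fact it establishes more than the lemma claims: you show that $F_{v,\delta,x}$ is continuous on $(-1,1)$ for \emph{every} $x\in\R^d$, every $\delta>0$, and every $v$, not merely that $t=0$ is a Lebesgue point for $\mathcal{L}^d$-a.e.\ $x$. The change of variables identifying $F_{v,\delta,x}(t)$ with $(k*\phi_\delta)(x+tv)$ is sound, and your dominated-convergence argument for the continuity of $k*\phi_\delta$ is valid (the only potential issue being the discontinuity of $\phi_\delta$ on $\partial B_\delta(0)$, which you correctly dispose of as a null set).

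The paper itself does not prove this lemma; it simply quotes it from \cite{mondino2025equivalence}, Lemma 4.23. The phrasing there --- with the ``$\mathcal{L}^d$-a.e.\ $x$'' quantifier and the restriction to rational $\delta$ --- suggests that the cited proof may proceed by a Fubini-type argument rather than by recognising $F_{v,\delta,x}$ as a slice of a continuous convolution. Your route is more elementary and yields a strictly stronger conclusion, which is all the better for the application in Proposition~\ref{minimal_weak_upper_gradient_ex1}: the countability restriction on $\delta$ becomes unnecessary.
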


\begin{proposition}\label{minimal_weak_upper_gradient_ex1}
    For $1 \leq i \leq 4$, it holds that $G_i$ is the minimal weak upper gradient of $f_i$.
\end{proposition}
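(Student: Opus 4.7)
The plan is to show that any weak upper gradient $H$ of $f_i$ satisfies $H \geq G_i$ almost everywhere. The strategy is to construct, for each $i \in \{1,2,3,4\}$, test plans supported on straight line segments in an optimal direction $u_{(i)}$, and to apply the weak upper gradient inequality \eqref{weakgradient}.

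Concretely, I take $u_{(1)} = e_1$, $u_{(2)} = e_2$, $u_{(3)} = e_1+e_2$, $u_{(4)} = e_1-e_2$, which are precisely the directions achieving equality in \eqref{case_1f_1}--\eqref{case_2f_4}; equivalently, $|\partial_{u_{(i)}} f_i| = G_i \cdot \alpha(u_{(i)})$ on the non-continuous region $R_d := ((0,1)\setminus O) \times (0,1)^{d-1}$, and $|\partial_{u_{(i)}} f_i| = G_i \cdot |u_{(i)}|_g$ on the continuous regions $R_c := ((0,1)^d)^c \cup (O \times (0,1)^{d-1})$. For a Lebesgue point $y_0$ of $H$ (and of the other relevant $L^1_{\rm loc}$ functions) and a small Borel set $E$ of positive measure around $y_0$, I define the test plan $\boldsymbol{\pi}^{h,E}_i$ as the pushforward of $\frac{1}{\mathcal{L}^d(E)}\mathcal{L}^d|_E$ under the map $y \mapsto (t \mapsto y + th u_{(i)})$. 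Since $\vol_g$ is comparable to $\mathcal{L}^d$ locally and the curves have bounded derivative, this is a valid test plan.

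Applying \eqref{weakgradient} and using that $f_i$ is linear (so that $|f_i(\gamma_1) - f_i(\gamma_0)| = h |\partial_{u_{(i)}} f_i|$ is constant on the support of $\boldsymbol{\pi}^{h,E}_i$), the inequality simplifies to
\begin{align*}
h|\partial_{u_{(i)}} f_i| \leq \frac{1}{\mathcal{L}^d(E)} \int_E \int_0^1 H(y + thu_{(i)}) \cdot |\dot{\gamma}^{y,h}_t| \, \di t \, \di y.
\end{align*}
By Proposition \ref{metric_speed_depends_on_direction} together with \eqref{asymptotic_lower_bound_metric_ex1} and \eqref{upper_bound_metric_ex1}, the metric speed $|\dot{\gamma}^{y,h}_t|$ equals $h\alpha(u_{(i)})$ whenever $y + thu_{(i)}$ lies in $R_d$ at a Lebesgue point of $\{\theta = 2\}$, and equals $|h u_{(i)}|_g$ in $R_c$. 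Dividing by $h$, sending $h \to 0$ by $L^1$-continuity of translation (with Lemma \ref{Lebesguepointsadvanced_c} handling the pointwise differentiation of $H$ weighted by the speed factor), and then shrinking $E$ to $y_0$, yields the pointwise bound $|\partial_{u_{(i)}} f_i| \leq m(y_0, u_{(i)}) \cdot H(y_0)$, where $m(y_0, u_{(i)}) \in \{\alpha(u_{(i)}), |u_{(i)}|_{g(y_0)}\}$ depending on the region containing $y_0$. Rearranging gives $H(y_0) \geq G_i(y_0)$ at almost every $y_0$, as required.

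The main technical obstacle lies in the $h \to 0$ limit when $u_{(i)}$ has a nonzero $e_1$-component (i.e., for $i = 1, 3, 4$): the straight lines $t \mapsto y + thu_{(i)}$ then oscillate between $R_d$ and $O \times (0,1)^{d-1}$, making the speed factor discontinuous along the curve. I would resolve this by noting that at a Lebesgue point $y_0$ of $\{\theta = 2\}$ the Lebesgue density of $O$ vanishes, so $\mathcal{L}^1(\{t \in [0,1] : y + thu_{(i)} \in O \times (0,1)^{d-1}\}) \to 0$ as $h \to 0$ for a.e. $y$ near $y_0$. Consequently only the $h\alpha(u_{(i)})$-contribution survives the limit, reproducing exactly the equality case in \eqref{case_1f_1}--\eqref{case_2f_4} needed to recover $G_i(y_0)$.
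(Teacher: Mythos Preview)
Your approach is essentially the same as the paper's: both build test plans on straight segments in the optimal directions $u_{(i)}$ and localise via Lebesgue-point arguments, the paper by contradiction and you directly. The argument is sound, but you have manufactured an obstacle where there is none.

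You already recorded that $|\partial_{u_{(i)}}f_i| = G_i\cdot\alpha(u_{(i)})$ on $R_d$ \emph{and} $|\partial_{u_{(i)}}f_i| = G_i\cdot |u_{(i)}|_g$ on $R_c$. Since $|\partial_{u_{(i)}}f_i|$ is a constant, this says precisely that the product $G_i(\gamma_t)\,|\dot\gamma_t|$ equals that same constant at \emph{every} point along the curve, regardless of which region the curve is in. Hence, for any fixed $\tau>0$ and any transversal $\gamma^{y,i}$,
\[
|f_i(\gamma^{y,i}_1)-f_i(\gamma^{y,i}_0)| \;=\; \int_0^1 G_i(\gamma^{y,i}_t)\,|\dot\gamma^{y,i}_t|\,\di t
\]
holds exactly, with no $h\to 0$ limit and no density argument for $O$. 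This is the identity \eqref{difference_along_curve} in the paper; the remaining work is only to use Lemma~\ref{Lebesguepointsadvanced_c} to compare $\int |Df_i|_w|\dot\gamma|$ with $\int G_i|\dot\gamma|$ on a small tube around a well-chosen Lebesgue point.

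A separate imprecision: your claim that ``$\mathcal{L}^1(\{t : y+thu_{(i)}\in O\times(0,1)^{d-1}\})\to 0$ as $h\to 0$ for a.e.\ $y$ near $y_0$'' is false as stated. Because $O$ is open and dense, every neighbourhood of $y_0$ contains a set of \emph{positive} measure of points $y\in O\times(0,1)^{d-1}$, and for each such $y$ the curve stays inside $O\times(0,1)^{d-1}$ for all sufficiently small $h$, so the fraction is $1$, not $0$. What is true is that the \emph{density} of such $y$ vanishes as $E\to y_0$, which is enough after swapping limits (Fubini plus $L^1$-continuity of translation); but this needs to be said, and the exact identity above makes the whole detour unnecessary.
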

\begin{proof}
We will argue similarly to the proof of Proposition 4.24 in \cite{mondino2025equivalence}. 
    Fix an $i \in \{1,2,3,4\}$. We argue by contradiction. Suppose that $G_i$ is not the minimal weak upper gradient of $f_i$. Then, there exists an $\e >0$ such that the set $D_\e:= \{x: |Df_i|_w(x) < G_i(x)-\e\}$ has positive $\mathcal{L}^d$-measure. Then we can choose a point $x \in D_\e \cap N_2^c$ such that $x$ is a Lebesgue point of $|Df_i|$, $G_i$ and such that $t=0$ is a Lebesgue point of 
    \begin{align*}
         &F_{u_{(i)}, \delta, x}: t \to \frac{1}{\mathcal{L}^d(B_\delta(x))}\int_{B_\delta(x)} |Df_i|_w(y + tu_{(i)})\,\di\mathcal{L}^d(y), \ \mathrm{and} \\
         &G_{u_{(i)}, \delta, x}: t \to \frac{1}{\mathcal{L}^d(B_\delta(x))}\int_{B_\delta(x)} G_i(y + tu_{(i)})\,\di\mathcal{L}^d(y),
    \end{align*} 
    for all $\delta \in (0,1) \cap \Q$. This is possible by Lemma \ref{Lebesguepointsadvanced_c}. Then we may choose $\delta \in (0,1) \cap \Q$ small enough such that 
    \begin{align}\label{lebesgue_pt_1}
        &\frac{1}{\mathcal{L}^d(B_\delta(x))}\int_{B_\delta(x)} ||Df_i|_w(y) - |Df_i|_w(x)|\,\di\mathcal{L}^d(y) \leq \frac{\e}{8}, \ \mathrm{and}
        &\frac{1}{\mathcal{L}^d(B_\delta(x))}\int_{B_\delta(x)} |G_i(y)-G_i(x)|\,\di\mathcal{L}^d(y)  \leq \frac{\e}{8}.
    \end{align}
    Moreover, we can choose $\tau>0$ such that 
    \begin{align}\label{lebesgue_pt_2}
       & \frac{1}{2\tau} \frac{1}{\mathcal{L}^d(B_\delta(x))} \int_{-\tau}^\tau \Bigg|\int_{B_\delta(x)} |Df_i|_w(y+tu_{(i)})\,\di\mathcal{L}^d(y) - \int_{B_\delta(x)}|Df_i|_w(y) \,\di\mathcal{L}^d(y)\Bigg|\, \di t \leq \frac{\e}{8}, \ \mathrm{and} \nonumber \\
        &\frac{1}{2\tau} \frac{1}{\mathcal{L}^d(B_\delta(x))} \int_{-\tau}^\tau \Bigg|\int_{B_\delta(x)} G_i(y+tu_{(i)})\,\di\mathcal{L}^d(y) - \int_{B_\delta(x)}G_i(y) \,\di\mathcal{L}^d(y)\Bigg|\, \di t \leq \frac{\e}{8}.
    \end{align}
    Then \eqref{lebesgue_pt_1} and \eqref{lebesgue_pt_2} together with the fact that $|Df_i|_w(x) < G_i(x)-\e$, yields that
    \begin{align}
    \frac{1}{2\tau} \frac{1}{\mathcal{L}^d(B_\delta(x))} \int_{-\tau}^\tau \int_{B_\delta(x)} G_i(y+tu_{(i)})-|Df_i|_w(y+tu_{(i)})\,\di\mathcal{L}^d(y) \di t\geq \e - \frac{4\e}{8} = \frac{\e}{2}.
    \end{align}
    We now define the curve $\gamma^{y, i}:[0,1] \to \R^d$ as $\gamma^{y, i}: t \mapsto y + 2\tau(t- \frac{1}{2})u_{(i)}$. 
    Then, we define the test plan
    \begin{align}
    \di\boldsymbol{\pi}(\gamma):= \left\{ \begin{array}{ll}
           \frac{1}{\mathcal{L}^d(B_\delta(x))}\di\mathcal{L}^d(y) & \mathrm{if}\ \gamma = \gamma^{y, i}, \ \mathrm{for \ some} \ y \in  B_\delta(x), \\
           0\  & \mathrm{otherwise}.
        \end{array}\right.
    \end{align}
    Now from the fact that $\sfd_g$ and $\sfd_{euc}$ are $\sqrt{2}$-equivalent, we get that for each $y \in \R^d$ and each $t \in [0,1]$ the metric speed $|\dot{\gamma}^{y, i}| \geq \frac{1}{\sqrt{2}}|u_{(i)}|_{euc}$.  
\begin{align}\label{evaluation_test_plan_comparison}
&\int_0^1 \int_{C^0([0,1], \R^d)} (G_i(\gamma_t)-|Df_i|_w(\gamma_t))|\dot{\gamma_t}|\,\di \boldsymbol{\pi}(\gamma) \di t\nonumber \\
    &= \frac{1}{\mathcal{L}^d(B_\delta(x))} \int_{-\tau}^\tau \int_{B_\delta(x)} (G_i(y+tu_{(i)})-|Df_i|_w(y+tu_{(i)}))|\dot{\gamma}^{y,i}_t|\,\di\mathcal{L}^d(y) \di t \geq \frac{\tau\e}{\sqrt{2}}.
    \end{align}
Now we observe that for all points $y \in \R^d$, $t \in [0,1]$ such that $z:= y+tu_{(i)} \in (O \times (0,1)^{d-1}) \cup ([0,1]^{d})^c$, we have that $g \equiv \mathrm{Id}$ a neighbourhood of $z$ and $\nabla_g f_i(z) = u_{(i)}$. 
Then by definition we have that $G_i(z) |\dot{\gamma}_t^{y,i}|=G_i|u_{(i)}|_{euc} = \langle \nabla_{euc} f_i, u_{(i)} \rangle_{euc} = \partial_{u_{(i)}}f_i.$
Similarly, we have that for all points $y \in \R^d$, $t \in [0,1]$ such that $z:= y+tu_{(i)} \in (N_2 \cup N_g)^c \setminus ( O \times (0,1)^{d-1} \cup ([0,1]^d)^c)$ that $\partial_{u_{(i)}}f_i(z) = G_i(z)|\dot{\gamma}_t^{y,i}|$ (see \eqref{derivative_function_along_curve} and the equality cases of \eqref{case_1f_1}, \eqref{case_2f_2}, \eqref{case_2f_3}, and \eqref{case_2f_4}).

Hence, we get that for every $y$ such that $\gamma^{y,i}$ is transversal to $N_2 \cup N_g$, it holds
\begin{align}\label{difference_along_curve}
     f_i(\gamma^{y,i}_1)-  f_i(\gamma^{y,i}_0) = \int_0^1  \partial_{u_{(i)}}f_i(\gamma^{y,i}_t) \,\di t = \int_0^1 G_i(z)|\dot{\gamma}_t^{y,i}|\,\di t.
\end{align}
On the other hand, Lemma \ref{testplans_on_transversal} together with \eqref{evaluation_test_plan_comparison} and \eqref{difference_along_curve} yields that 
\begin{align*}
 &\int_0^1 \int_{C^0([0,1], \R^d)} |Df_i|_w(\gamma_t)|\dot{\gamma_t}|\,\di \boldsymbol{\pi}(\gamma) \di t \\
    &\leq \int_0^1 \int_{C^0([0,1], \R^d)} G_i(\gamma_t)|\dot{\gamma_t}|\,\di \boldsymbol{\pi}(\gamma) \di t-\frac{\tau\e}{\sqrt{2}} \\
    &= \int_0^1 \int_{C^0([0,1], \R^d)\cup \{\gamma \perp N_2\cup N_g\}} G_i(\gamma_t)|\dot{\gamma_t}|\,\di \boldsymbol{\pi}(\gamma) \di t-\frac{\tau\e}{\sqrt{2}}\\
    &< \int_{C^0([0,1], \R^d)\cup \{\gamma \perp N_2\cup N_g\}}| f_i(\gamma_1^{y,i})-f_i(\gamma_0^{y,i})|\, \di \boldsymbol{\pi}(\gamma) = \int_{C^0([0,1], \R^d)}| f_i(\gamma_1^{y,i})-f_i(\gamma_0^{y,i})|\, \di \boldsymbol{\pi}(\gamma).
\end{align*}
This contradicts the definition of a test plan and hence proves the Lemma. 
\end{proof}
Now we have got all the ingredients to prove the following: 
\smallskip

\begin{proposition}\label{first_non_infinitesimally_hilb}
    The metric measure space $(\R^d, \sfd_g, \vol_g)$ is not infinitesimally Hilbertian. 
\end{proposition}
\begin{proof}
    Pick a function $\phi \in C_c^\infty(\R^d)$ such that $\phi \equiv 1$ on $[-1, 2]^d$. Consider the functions $\Tilde{f}_i := \phi f_i$ for $1 \leq i \leq 4$. Then by the previous observations, we have that 
    \begin{align*}
        &|D\Tilde{f}_i|_w = G_i \ \mathrm{in}\ (-1, 2)^d\\
        &|D\Tilde{f}_i|_w = \frac{1}{\sqrt{2}}|\nabla_{euc} \Tilde{f}_i|_{euc} \ \mathrm{in}\ ((-1,2)^d)^c.
    \end{align*}
    More precisely, this gives
    \begin{align}
        |D\Tilde{f}_i|_w= |\nabla_g \Tilde{f}_i|_g \ \mathrm{a.e.\ in} \ (O\times(0,1)^{d-1}) \cup ((0,1)^{d})^c,
    \end{align}
    and
    \begin{align*}
        &|D\Tilde{f}_1|_w= \frac{1}{\sqrt{2}}, \ \mathrm{a.e.\ in} \ (0,1)^{d}\setminus O\times(0,1)^{d-1}, \\
        &|D\Tilde{f}_2|_w=|D\Tilde{f}_3|_w=|D\Tilde{f}_4|_w=1 \ \mathrm{a.e.\ in} \ (0,1)^{d}\setminus (O\times(0,1)^{d-1}).
    \end{align*}
    Then it immediately follows that 
    \begin{align}
    |D\Tilde{f}_3|_w^2 + |D\Tilde{f}_4|_w^2 = 2(|D\Tilde{f}_1|_w^2 + |D\Tilde{f}_2|_w^2)   \ \mathrm{a.e.\ in} \ (O\times(0,1)^{d-1}) \cup ((0,1)^{d})^c,
    \end{align}
    but 
    \begin{align}
        |D\Tilde{f}_3|_w^2 + |D\Tilde{f}_4|_w^2 - 2(|D\Tilde{f}_1|_w^2 + |D\Tilde{f}_2|_w^2) = -1 \ \mathrm{a.e.\ in} \ (0,1)^{d}\setminus (O\times(0,1)^{d-1}).
    \end{align}
    Moreover, we have that $\Tilde{f}_3 = \Tilde{f}_1 + \Tilde{f}_2$ and $\Tilde{f}_4 = \Tilde{f}_1 - \Tilde{f}_2$ and all four functions have finite Cheeger energy.
    But as $\mathcal{L}^d((0,1)^{d}\setminus O\times(0,1)^{d-1})\geq 1-2\kappa >0$ and the fact that $\vol_g$ and $\mathcal{L}^d$ are $2^{\frac{d}{2}}$-equivalent, we get that 
    \begin{align}
        \Ch(\Tilde{f}_3) + \Ch(\Tilde{f}_4) \neq 2(\Ch(\Tilde{f}_1) + \Ch(\Tilde{f}_2)),
    \end{align}
    which proves that the metric measure space $(\R^d, \sfd_g,  \vol_g)$ is not infinitesimally Hilbertian.
\end{proof}

\subsection{An example of a Sobolev Riemannian metric, neither quasi-Riemannian nor infinitesimally Hilbertian}\label{subsec:sobolev}
 Fix a $d \geq 3$ and set $p := d-1$. We will now refine the previous examples to construct a Riemannian metric $g$ on $\R^d$ such that $g, g^{-1} \in L^{\infty}$, $g \in W^{1,p}_{\rm loc}$ such that $(M, \sfd_g, g)$ is not quasi-Riemannian and such that the metric measure space $(M, \sfd_g, 
 \vol_g)$ is not infinitesimally Hilbertian.  

 Define $\omega_{d-1}:= \mathcal{L}^{d-1}(B^{\R^{d-1}}_1(0))$. We start with the following preparatory result:
 \smallskip
 
 \begin{lemma}\label{good_sobolev_function}
     For each $R>0$ and $\e>0$, there exists an $r >0$ and a function $f \in C^{0,1}_c(B^{\R^{d-1}}_R(0), [0,1])$ with $f(B_r(0)) = \{1\}$ and such that
     \begin{align}
         \int_{\R^{d-1}} |\nabla_{euc}f|^{d-1}\, \di x < \e.
     \end{align}
 \end{lemma}
 \begin{proof}
     This proof uses the idea of the proof of \cite[Theorem 4.16]{evans2025measure}. For each $\rho>0$ consider the function $\eta_\rho \in C_c^{0,1}(B^{\R^{d-1}}_\rho(0), [0,1])$ defined by
     \begin{align}
           \eta_\rho(y) := \left\{ \begin{array}{ll}
           1 & \mathrm{if}\ y \in B_{\rho/4}(0), \\
           1-\frac{4(|y|-\frac{\rho}{4})}{\rho}\  & \mathrm{if} \ y \in B_{\rho/2}(0) \setminus B_{\rho/4}(0) \\
           0 & \mathrm{if}\ y \in B^c_{\rho/2}(0).
        \end{array}\right.
     \end{align}
     We can estimate that
     \begin{align}
         \int_{\R^{d-1}} |\nabla_{euc} \eta_\rho|^{d-1} \, \di x \leq  \int_{B_{\rho/2}(0)} \frac{4^{d-1}}{\rho^{d-1}} \leq 2^{d-1}\omega_{d-1}.
     \end{align}
     Define 
     \begin{align*}
         S_j:= \sum_{k=1}^j \frac{1}{k}.
     \end{align*}
     Now for each $j \in \N$ define the function $f_j \in C^{0,1}_c(B_R(0), [0,1])$ as
     \begin{align}
         f_j:= \frac{1}{S_j} \sum_{k=1}^j\frac{\eta_{R/8^k}}{k}.
     \end{align}
     Then the construction of $\eta_\rho$ implies that on $B_{R/8^{j+1}}(0)$ we have that $f_j \equiv 1$. Moreover, we have that $\supp \, \nabla_{euc}\eta_{R/8^k} = \overline{B_{R/(2 \cdot 8^k)}(0)} \setminus B_{R/(4 \cdot 8^k)}(0)$, hence for any $k \neq k' \in \N$, we get that $\supp \, \nabla_{euc}\eta_{R/8^k} \cap \supp \, \nabla_{euc}\eta_{R/8^{k'}} = \emptyset$. We may now estimate
     \begin{align*}
         \int_{\R^{d-1}} |\nabla_{euc}f_j|^{d-1}\, \di x = \frac{1}{S_j^{d-1}} \sum_{k=1}^j \frac{1}{k^{d-1}}\int_{\R^{d-1}} |\nabla_{euc} \eta_{R/8^k}|^{d-1}\, \di x \leq \frac{2^{d-1}\omega_{d-1}}{S_j^{d-1}} \sum_{k=1}^j \frac{1}{k^{d-1}}.
     \end{align*}
     Now as $d-1>1$, we get that $\lim_{j \to \infty} \frac{2^{d-1}\omega_{d-1}}{S_j^{d-1}} \sum_{k=1}^j \frac{1}{k^{d-1}} =0$. Hence, we can find a $j \in \N$ such that $\int_{\R^{d-1}} |Df_j|^{d-1}\, \di x < \e$. Then setting $r:= \frac{R}{8^{j+1}}$ finishes the proof. 
 \end{proof}

 Pick a decreasing sequence $(R_m)_{m \in \N} \subset (0,1)$ such that $R_0 = \frac{1}{2}$ and for $m \geq 1$ it holds
\begin{align}
    & R_m \leq \frac{1}{2^{m+3}}, \label{distance_control_between_binary_gridpoints}\\
    &\sum_{m \in \N} 2^{(m+1)d+4}R_m < \frac{1}{4(1+\omega_{d-1})} \label{lebesgue_and_diameter_control}.
\end{align}
We inductively define the finite sets $D_m \subset (0,1)^{d-1}$, open sets $O_m \subset (-1,2)^{d-1}$, a decreasing null sequence $(r_m)_{m \in \N} \subset (0,1)$ and functions $\theta_m \in C^\infty(\R^{d-1})$ as follows: 
\begin{align*}
    D_0:=O_0 := \emptyset, \ \theta_0 \equiv 2 \in C^\infty(\R^{d-1}), r_0 := \frac{1}{4}.
\end{align*}
Now for $m \geq 1$, define 
\begin{align}
    &D_m := \Big(\frac{1}{2^{m+1}} \Z^{d-1} \cap (0,1)^{d-1}\Big) \setminus \overline{\big(\bigcup_{j \leq m-1}O_j\big)}, \label{set_D_m} \\
    &r_m:= \min\Big(R_m, \frac{1}{2}\dist_{euc}(D_m, \bigcup_{j \leq m-1}O_j), \frac{r_{m-1}}{2}\Big), \label{set_r_m} \\
    &O_m := \bigcup_{x \in D_m} B_{r_m}(x) \label{set_O_m}. 
\end{align}
Note that \eqref{set_r_m} together with \eqref{set_O_m} yield that 
\begin{align}\label{disjoint_Os}
    \overline{O_m} \cap \overline{O_j} = \emptyset \ \mathrm{for\ all\ } j < m.
\end{align}
Moreover, note that from $D_m \subset \frac{1}{2^{m+1}} \Z^{d-1}$, together with \eqref{distance_control_between_binary_gridpoints} and \eqref{set_r_m}, we have that for $x, x' \in D_m$, $x \neq x'$, it holds 
\begin{align}\label{m-th_gridpoints_are_separated}
    \overline{B_{r_m}(x)} \cap \overline{B_{r_m}(x')} = \emptyset.
\end{align}
Using Lemma \ref{good_sobolev_function}, we can for each $m \in \N$ find a function $\eta_m \in C_c^{0,1} (\R^{d-1}, [0,1])$ such that there exists an $\rho_m \in (0, r_m/5)$ with $\eta_m|_{B_{\rho_m}(0)} \equiv 1$, $\eta_m|_{\R^{d-1} \setminus B_{r_m/5}(0)} \equiv 2$ and 
\begin{align}\label{eta_m_sobolev_norm}
    2^{(m+1)(d+1)}\int_{\R^{d-1}} |D\eta_m|^{d-1}\, \di x \leq 2^{-m}.
\end{align}
Define the function $\psi_m \in C^\infty(\R^{d-1})$ via 
\begin{align}\label{def_psi_m}
    \psi_m(y) := \left\{ \begin{array}{ll}
           2 & \mathrm{if}\ y \in O_m^c, \\
           \eta_m\big(y-x\big)\  & \mathrm{if} \ y \in B_{r_m}(x) \ \mathrm{for\ some\ } x \in D_m.
        \end{array}\right.
\end{align}
By \eqref{set_O_m} and \eqref{m-th_gridpoints_are_separated}, we get that $\psi_m$ is well-defined. 
Now define
\begin{align}\label{set_theta_m}
    \theta_m := \min(\theta_{m-1}, \psi_m). 
\end{align}
We observe that by \eqref{lebesgue_and_diameter_control}, it holds
\begin{align}\label{union_of_binary_balls_small_measure}
    \mathcal{L}^{d-1}\big(\bigcup_{m \in \N}\overline{O_m}\big) \leq \sum_{m=1}^\infty \#(\frac{1}{2^{m+1}}\Z^{d-1}\cap (0,1)^{d-1})\mathcal{L}^{d-1}(B_{r_m}(0)) \leq  \omega_{d-1}\sum_{m=1}^\infty2^{(m+1)(d-1)+1} R_m^{d-1} \leq \frac{1}{4}. 
\end{align}
Define the set
\begin{align}\label{def_set_s}
    S:= (0,1)^{d-1} \setminus \bigcup_{m \in \N} \overline{O_m}.
\end{align}

\begin{lemma}\label{D_m_dense}
$\bigcup_{m \in \N} D_m$ is dense in $S$.
\end{lemma}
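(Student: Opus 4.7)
The plan is a direct approximation argument. Given $s \in S$ and $\e > 0$, the goal is to exhibit some $m \in \N$ and $x \in D_m$ with $|x - s|_{euc} < \e$. The key observation is a dichotomy: obstructions to using the scale-$m$ binary grid come either from the closures $\overline{O_j}$ with $j < M$ (for some fixed $M$), which form a finite closed set missing $s$ and are therefore bounded away from $s$, or from $\overline{O_j}$ with $j \geq M$, for which the ball radii $r_j \leq R_j$ are uniformly small by the choice of $M$.

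First I would fix $M \in \N$ large enough that $R_m \leq \e/4$ and $\sqrt{d-1}/2^{m+1} \leq \e/4$ for every $m \geq M$; this is possible since $R_m \to 0$ as a consequence of \eqref{sobolev_bound_radii}. Next, set $F_M := \bigcup_{j < M}\overline{O_j}$, a finite (possibly empty) union of closed sets and hence closed. By \eqref{def_set_s}, $s \notin F_M$, so $\delta := \dist_{euc}(s, F_M) > 0$ (with the convention $\delta := +\infty$ if $M = 1$).

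Next, I would choose $m \geq M$ large enough so that $\sqrt{d-1}/2^{m+1} < \min(\delta, \e/4)$. Since $s \in (0,1)^{d-1}$, a direct coordinate-wise check gives the existence of $x \in \tfrac{1}{2^{m+1}}\Z^{d-1} \cap (0,1)^{d-1}$ with $|x - s|_{euc} \leq \sqrt{d-1}/2^{m+1}$. If $x \in D_m$, then by \eqref{set_D_m} the proof is complete. Otherwise $x$ lies in $\overline{\bigcup_{j \leq m-1} O_j}$, which by \eqref{set_O_m} and finiteness of each $D_j$ equals $\bigcup_{j \leq m-1}\bigcup_{y \in D_j}\overline{B_{r_j}(y)}$. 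Hence there exist $j \leq m-1$ and $y \in D_j$ with $|x - y|_{euc} \leq r_j$.

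The last step eliminates the case $j < M$: in that case $x \in F_M$ and $|s - x|_{euc} < \delta = \dist_{euc}(s, F_M)$, a contradiction. Therefore $j \geq M$, and hence $r_j \leq R_j \leq \e/4$ by the choice of $M$. Then
\[
|y - s|_{euc} \leq |y - x|_{euc} + |x - s|_{euc} \leq r_j + \tfrac{\sqrt{d-1}}{2^{m+1}} \leq \tfrac{\e}{4} + \tfrac{\e}{4} < \e,
\]
with $y \in D_j \subset \bigcup_{m \in \N} D_m$. I do not anticipate any serious technical obstacle; the only subtlety is the separation of the finitely many ``early'' obstructions (controlled via the positive distance $\delta$) from the ``late'' ones (controlled via the smallness of $R_j$).
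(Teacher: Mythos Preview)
Your proof is correct and follows essentially the same route as the paper's: both isolate the finitely many ``early'' obstructions $\overline{O_j}$ with $j<M$ via a positive distance from $s$, and control the remaining ``late'' ones via $r_j\leq R_j\to 0$, then pick a fine dyadic grid point and, if it is excluded from $D_m$, pass to the nearby $y\in D_j$ witnessing that exclusion. The only cosmetic difference is that the paper phrases this by contradiction (assuming $B_r(y)\cap\bigcup_m D_m=\emptyset$) while you argue directly; note also that $R_m\to 0$ follows more immediately from \eqref{distance_control_between_binary_gridpoints} than from \eqref{sobolev_bound_radii}.
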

\begin{proof}
     Assume this was not the case. Then there exists a $y \in S$ and an $r >0$ such that $B_r(y) \cap \bigcup_{m \in \N} D_m = \emptyset$, where $B_r(y)=B^{\R^{d-1}}_{r}(y)$. 
By the monotonicity of the sequence $R_m$, there exists a $j \in \N$ such that 
\begin{align}\label{pick_j_large_for_small_R}
    R_{j'} \leq \frac{r}{3}, \ \mathrm{for}\ j' \geq j.
\end{align}
Moreover, as by definition $\bigcup_{k \leq j} \overline{O_k} \not \ni y$ is compact, there exists an $r' \in (0, \frac{r}{3})$ such that 
\begin{align}\label{r_prime_stay_away_from_first_j}
    B_{r'}(y) \cap \bigcup_{k \leq j} \overline{O_k} = \emptyset.
\end{align}
Now as $(\bigcup_{l \in \N} \frac{1}{2^{l+1}} \Z^{d-1}\setminus \bigcup_{l \leq j} \frac{1}{2^{l+1}} \Z^{d-1})\cap (0,1)^{d-1}$ is dense in $(0,1)^{d-1}$, can find $j<l \in \N$, $z \in \frac{1}{2^{l+1}}\Z^{d-1} \cap (0,1)^{d-1}$, such that $z \in B_{r'}(y)$. 
Then by our assumption, we have that $z \notin \bigcup_{m \in \N} D_m$, hence $z \notin D_l$. 
Considering the $l$-th step of the above construction and \eqref{set_D_m}, we get that $z \notin D_l$ implies
\begin{align*}
    z \in \bigcup_{k < l} \overline{O_k}. 
\end{align*}
But then \eqref{r_prime_stay_away_from_first_j} implies that 
\begin{align*}
    z \in \bigcup_{j < k < l} \overline{O_k}. 
\end{align*}
Hence there exists a $j<k <l$ such that $z \in \overline{O_k}$, which together with \eqref{set_O_m} implies that $z \in B_{r_k}(x)$ for some $x \in D_k$. But then \eqref{pick_j_large_for_small_R} and \eqref{r_prime_stay_away_from_first_j} give that 
\begin{align*}
    |y-x|_{euc} \leq |z-y|_{euc} + |z-x|_{euc} \leq r_k+ r' \leq R_j + r' < \frac{2r}{3}.
\end{align*}
But then $x \in B_r(y) \cap D_k$, which gives a contradiction. 
\end{proof}

Now, we will prove the following useful property:
\smallskip

\begin{lemma}\label{shrinking_ball_om_behaviour}
    For each point $y \in \R^{d-1} \setminus \bigcup_{m\in \N} \overline{O_m}$, we have that 
    \begin{align}
        &\lim_{r \to 0} \frac{\sum_{m \in \N}r_m\#\{x \in D_m: \overline{B_{r_m/5}(x)} \cap \overline{B_r(y)}\neq \emptyset\}}{r} =0 \ \mathrm{and} \label{diameter_sum_shrinking_balls} \\
        &\lim_{r \to 0} \frac{\sum_{m \in \N}r^{d-1}_m\#\{x \in D_m: \overline{B_{r_m/5}(x)} \cap \overline{B_r(y)}\neq \emptyset\}}{r^{d-1}} =0. \label{volume_sum_shrinking_ball}
    \end{align}
\end{lemma}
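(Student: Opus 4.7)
The plan is to use a volumetric inequality based on the disjointness of the relevant balls, reducing the problem to a Lebesgue density estimate. The key preliminary observation is that if $\overline{B_{r_m/5}(x)}\cap \overline{B_r(y)}\neq\emptyset$ with $x\in D_m$, then $y\notin \overline{O_m}$ forces $|y-x|_{euc}>r_m$, while the intersection condition yields $|y-x|_{euc}\leq r+r_m/5$; together these give $r_m<5r/4$. By the geometric decay in \eqref{set_r_m}, only indices $m\geq m^\star(r)$ can contribute, with $m^\star(r)\to\infty$ as $r\to 0$. Moreover, by \eqref{disjoint_Os} and \eqref{m-th_gridpoints_are_separated}, the collection of contributing balls is pairwise disjoint and each lies in $\overline{B_{3r/2}(y)}\cap \Omega$, where $\Omega:=\bigcup_m O_m$. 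Writing $N_m(r)$ for the cardinality in the statement, I obtain the master bound
\[
\omega_{d-1}\sum_{m\in\N} N_m(r)\,(r_m/5)^{d-1} \;\leq\; \mathcal{L}^{d-1}\!\big(\overline{B_{3r/2}(y)}\cap\Omega\big).
\]
Dividing by $r^{d-1}$ and invoking the Lebesgue density theorem (using that $\mathcal{L}^{d-1}$-a.e.\ $y\in\R^{d-1}\setminus\bigcup_m\overline{O_m}\subseteq\Omega^c$ is a density point of $\Omega^c$, noting $\mathcal{L}^{d-1}(\Omega)\leq 1/4$ by \eqref{union_of_binary_balls_small_measure}) establishes \eqref{volume_sum_shrinking_ball}.

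For \eqref{diameter_sum_shrinking_balls}, I would partition the contributing set $\{m\geq m^\star(r)\}$ into the \emph{fine-lattice} regime $\{2^m r\geq 1\}$ and the \emph{coarse-lattice} regime $\{2^m r<1\}$. In the fine-lattice regime, the standard lattice-point count $N_m(r)\leq C_d(2^m r)^{d-1}$ combined with the summability $\sum_m 2^{md}R_m<\infty$ from \eqref{lebesgue_and_diameter_control} yields
\[
\sum_{\substack{m\geq m^\star(r)\\ 2^m r\geq 1}}\frac{N_m(r)\,r_m}{r} \;\leq\; C_d\, r^{d-2}\sum_{m\geq m^\star(r)} 2^{m(d-1)}r_m \;\longrightarrow\; 0
\]
as $r\to 0$, since $r^{d-2}$ is bounded for $d\geq 3$ and the tail of the convergent series vanishes as $m^\star(r)\to\infty$. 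The coarse-lattice regime consists of $m\in[m^\star(r),\log_2(1/r)]$, in each of which $N_m(r)$ is bounded by a dimensional constant (at most a bounded number of lattice cells of spacing exceeding $r/2$ can meet a ball of radius $<5r/4$); here one exploits the rapid decay of $r_m$ inherited from \eqref{sobolev_bound_radii} and the Lebesgue density of $\Omega^c$ at $y$ to show that only boundedly many indices contribute nontrivially and that each contributes $r_m/r=o(1)$, so this part also tends to $0$.

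The main obstacle is the coarse-lattice regime in \eqref{diameter_sum_shrinking_balls}: one must simultaneously exploit the Lebesgue density of $\Omega^c$ at $y$, the combinatorial structure of the dyadic lattices $D_m$ (to control which indices have $N_m(r)\geq 1$ via $\dist(y,D_m)\leq r+r_m/5$), and the fast decay imposed by \eqref{sobolev_bound_radii} (to push $r_m/r$ to zero at the critical scale $m\approx\log_2(1/r)$). By contrast, \eqref{volume_sum_shrinking_ball} falls out cleanly from the master volumetric bound once the preliminary observation $r_m<5r/4$ is in place.
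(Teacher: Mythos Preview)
Your preliminary observation --- that any contributing index satisfies $r_m<5r/4$, hence only $m\geq m^\star(r)$ with $m^\star(r)\to\infty$ can appear --- matches the paper exactly and is the core of the argument. However, the proposal has two genuine gaps relative to the lemma as stated.

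First, the lemma asserts the limits vanish for \emph{every} $y\in\R^{d-1}\setminus\bigcup_m\overline{O_m}$, not merely for almost every such $y$. Your argument for \eqref{volume_sum_shrinking_ball} ends by invoking the Lebesgue density theorem, which delivers the conclusion only at density points of $\Omega^c$; you even write ``$\mathcal{L}^{d-1}$-a.e.\ $y$'', so you are aware of this, but you do not close the gap. Note that a point of $\R^{d-1}\setminus\bigcup_m\overline{O_m}$ may well lie in $\overline{\Omega}\setminus\Omega$ (the $O_m$ accumulate), so density at every such $y$ is not automatic. The paper bypasses density entirely: once $r_m<5r/4$, it bounds the count directly by the lattice-point estimate $N_m(r)\leq C(d)\,2^{(d-1)(m+1)}r^{d-1}$, so that both ratios are dominated by (a multiple of) $\sum_{m\geq m^\star(r)}2^{(d-1)(m+1)}r_m$ --- a tail of the series in \eqref{lebesgue_and_diameter_control}. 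Since each $r_m>0$, this tail vanishes as $r\to 0$, uniformly in $y$.

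Second, your coarse-lattice treatment of \eqref{diameter_sum_shrinking_balls} is incomplete. You assert that ``each contributes $r_m/r=o(1)$'' at the critical scale, but no mechanism is given: the available constraints yield only $r_m/r<5/4$, and the geometric decay $r_m\leq r_{m-1}/2$ from \eqref{set_r_m} bounds $\sum_{m\geq m^\star}(r_m/r)$ by an absolute constant, not by something tending to zero. Your appeal to \eqref{sobolev_bound_radii} is misdirected --- that condition controls $R_m^{d-1-p}$ and the $W^{1,p}$ norm of $\theta$, not the present combinatorics; the operative summability throughout is \eqref{lebesgue_and_diameter_control}. The paper's single uniform lattice-count bound handles both \eqref{diameter_sum_shrinking_balls} and \eqref{volume_sum_shrinking_ball} at once and makes the regime split unnecessary.
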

\begin{proof}
    First, denote 
    \begin{align}
        &\zeta_{m}(y, r):= \#\{x \in D_m: \overline{B_{r_m/5}(x)} \cap \overline{B_r(y)}\neq \emptyset\}.
    \end{align}
    Fix an $m \in \N$.
    As $y \notin \overline{O_m}$, we get that for each $x \in D_m$, it holds $|x-y|_{euc} > r_m$, hence 
    \begin{align}
        \dist_{euc}(y, \overline{B_{r_m/5}(x)}) > \frac{4r_m}{5}.
    \end{align}
Thus, if $r \leq \frac{4r_m}{5}$, it follows that $\zeta_m(y, r) =0$. It now suffices to consider those $m$ such that $r_m \leq \frac{5}{4}r$. In that case, we get 
    \begin{align*}
        \zeta_m(y,r) &\leq \#\big\{ \frac{1}{2^{m+1}}\Z^{d-1} \cap \overline{B_{r+\frac{r_m}{5}}(y)} \neq \emptyset\big\} \leq C(d) 2^{(d-1)(m+1)} \frac{5^{d-1}}{4^{d-1}} r^{d-1},
    \end{align*}
    where $C(d)$ is a constant only depending on $d$. Then it follows that 
    \begin{align}
        \frac{\sum_{m \in \N}r^{d-1}_m\#\{x \in D_m: \overline{B_{r_m/5}(x)} \cap \overline{B_r(y)}\neq \emptyset\}}{r^{d-1}} &\leq C(d)\sum_{m:\, 4r_m \leq 5r} 2^{(d-1)(m+1)}r_m^{d-1} \nonumber\\
        &\leq C(d)\sum_{m:\, 4r_m \leq 5r} 2^{(d-1)(m+1)}r_m. \label{estimate_volume_sum}
    \end{align}
    Now \eqref{lebesgue_and_diameter_control} implies that the right-hand-side of \eqref{estimate_volume_sum} converges to $0$ as $r \to 0$, which implies \eqref{volume_sum_shrinking_ball}.
    Moreover, 
    \begin{align}
        \frac{\sum_{m \in \N}r_m\#\{x \in D_m: \overline{B_{r_m/5}(x)} \cap \overline{B_r(y)}\neq \emptyset\}}{r} &\leq r^{d-2}C(d)\sum_{m:\, 4r_m \leq 5r} 2^{(d-1)(m+1)}r_m, 
    \end{align}
    which using \eqref{lebesgue_and_diameter_control} once again yields \eqref{diameter_sum_shrinking_balls}.
\end{proof}
\begin{proposition}
    The functions $\theta_m$ converge to a function $\theta$ in $W^{1,d-1}_{\rm loc}(\R^{d-1})$ as $m \to \infty$.
\end{proposition}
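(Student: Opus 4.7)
The plan is to exploit the disjointness \eqref{disjoint_Os} to reduce the Sobolev convergence to a summability condition on the $L^p$-norms of $\nabla\psi_m$ on the isolated bumps, delivered by \eqref{sobolev_bound_radii}. First I would observe that $\theta_m$ is pointwise decreasing and uniformly bounded in $[1,2]$, since each $\psi_m$ takes values in $[1,2]$ (as $\eta\in[1,2]$) and $\theta_0\equiv 2$. Hence there exists a pointwise limit $\theta:\R^{d-1}\to[1,2]$, and dominated convergence yields $\theta_m\to\theta$ in $L^p_{\rm loc}(\R^{d-1})$; in fact $\theta_m\equiv 2\equiv\theta$ outside the bounded set $\bigcup_k\overline{O_k}$, so even $\theta_m-2\to\theta-2$ in $L^p(\R^{d-1})$.

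The crucial structural step uses the disjointness of the $\overline{O_m}$ given by \eqref{disjoint_Os}: since $\psi_j\equiv 2$ off $O_j$, on each $O_m$ we have $\psi_j\equiv 2$ for every $j\neq m$, and induction on the definition \eqref{set_theta_m} gives $\theta_j|_{O_m}=\psi_m|_{O_m}$ for all $j\geq m$ and $\theta_j|_{O_m}=2$ for $j<m$, while $\theta_j\equiv 2$ on $\R^{d-1}\setminus\bigcup_k\overline{O_k}$ for every $j$. Each $\theta_m$ is then Lipschitz as the minimum of finitely many smooth bounded functions, and its weak gradient is $\nabla\theta_m=\sum_{k\leq m}\mathbbm{1}_{O_k}\nabla\psi_k$; correspondingly the candidate $\theta$ will satisfy $\nabla\theta=\sum_{k\in\N}\mathbbm{1}_{O_k}\nabla\psi_k$ once one shows this series lies in $L^p$.

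The final estimates are routine. On each $B_{r_k}(x)$, $x\in D_k$, the chain rule applied to \eqref{def_psi_m} gives $|\nabla\psi_k|_{euc}\leq 5\|\nabla\eta\|_\infty/r_k$, and combined with $\#D_k\leq 2^{(k+1)(d-1)}$ from \eqref{set_D_m} and the ball volume $\omega_{d-1} r_k^{d-1}$ this yields $\int_{O_k}|\nabla\psi_k|^p\,\di\mathcal{L}^{d-1}\leq C(d,\eta)\,2^{(k+1)(d-1)}r_k^{d-1-p}$. Since $p<d-1$ and $r_k\leq R_k\leq 1$ we have $r_k^{d-1-p}\leq R_k^{d-1-p}$, and $2^{(k+1)(d-1)}\leq 2^{d-1}\cdot 2^{kd}$, so \eqref{sobolev_bound_radii} makes the total series finite and the tail $\|\nabla\theta-\nabla\theta_m\|_{L^p(\R^{d-1})}^p=\sum_{k>m}\int_{O_k}|\nabla\psi_k|^p\,\di\mathcal{L}^{d-1}$ tends to zero. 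Together with the $L^p$-convergence this proves $\theta_m\to\theta$ in $W^{1,p}_{\rm loc}(\R^{d-1})$ (and even globally in $W^{1,p}$ after subtracting the constant $2$). The only subtle point is the clean formula for $\nabla\theta_m$, and this is precisely where the inductive radius choice \eqref{set_r_m} pays off: it keeps the bumps $\psi_k$ genuinely isolated, so taking minima never produces new interfaces between them.
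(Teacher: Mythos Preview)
Your proposal is correct and follows essentially the same approach as the paper: both exploit the disjointness \eqref{disjoint_Os} to reduce $\theta_m-\theta_{m'}$ to a sum of isolated bump contributions $\psi_j-2$, estimate $\|\nabla\psi_j\|_{L^p}^p\leq C\,2^{(j+1)(d-1)}r_j^{d-1-p}$, and conclude via \eqref{sobolev_bound_radii}. The only cosmetic difference is that the paper bundles the $L^p$- and gradient convergence via the Poincar\'e inequality applied to $\psi_j-2\in C_c^\infty((-1,2)^{d-1})$, whereas you handle $L^p$-convergence separately by monotone/dominated convergence and then treat the gradients through the explicit formula $\nabla\theta_m=\sum_{k\le m}\mathbbm{1}_{O_k}\nabla\psi_k$.
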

\begin{proof}
    We first note that for all $m$, it holds $\theta_m \equiv 2$ outside $(-1, 2)^{d-1}$, hence, it suffices to consider its behaviour inside $(-1, 2)^{d-1}$. Note that \eqref{def_psi_m} implies that $\psi_m^{-1}([1, 2)) \subset O_m$. Then \eqref{disjoint_Os} together with \eqref{set_theta_m} yields that for any $m' < m$ it holds 
    \begin{align}\label{explicit_theta_for_cauchy_sequence}
        \theta_m = \theta_{m'} + \sum_{j=m'+1}^{m} (\psi_j-2).
    \end{align}
    Then, as $(\psi_j-2) \in C_c^\infty((-1,2)^{d-1})$, the Poincaré inequality together with \eqref{m-th_gridpoints_are_separated} and \ref{eta_m_sobolev_norm} yields that 
    \begin{align}
        \norm{(\psi_j-2)}^{d-1}_{W^{1,d-1}(\R^{d-1})} &\leq C \int_{(-1, 2)^{d-1}}|\nabla_{euc}\psi_j|^{d-1}\, \di \mathcal{L}^{d-1} \nonumber \\
        &= C|D_j| \int_{B_{r_j}(0)} |\nabla_{euc}\eta_j|^{d-1}(x) \, \di \mathcal{L}^{d-1}(x) \nonumber \\
        & \leq C2^{(j+1)(d-1)}2^{-j(d+1)} \leq C2^{-j} \label{sobolev_estimate_psi_j}.
    \end{align}
    Hence, for $m' \leq m$, we have that 
    \begin{align*}
        \norm{\nabla_{euc}\theta_m -\nabla_{euc}\theta_{m'}}_{L^{d-1}((-1, 2)^{d-1})}^{d-1} \leq C\sum_{j=m'+1}^{m} 2^{-j}.
    \end{align*}
    Note that the right hand side goes to $0$ as $m' \to \infty$, thus, $(\nabla_{euc}\theta_m)_m$ is a null-sequence in $L^{d-1}(\R^{d-1})$. As for all $m$ one has that $\theta_m-2\in C_c^\infty((-1, 2)^{d-1}, \R)$, the Poincaré inequality yields that $\theta_m$ converges to a function $\theta$ in $W^{1,d-1}_{\rm loc}(\R^{d-1})$. 
\end{proof}
Our inductive construction also implies that $\theta_m$ converges pointwise and is bounded in $L^\infty$, hence, we can describe $\theta$ explicitly:
For all $y \in \R^{d-1}\setminus \bigcup_{m \in \N} \overline{O_m}$, we have that $\theta_m(y)=2$ for all $m \in \N$. For all $y \in \bigcup_{m \in \N} \overline{O_m}$, there exists a $m^*$ such that $y \in \overline{O_{m^*}}$. Then for any $m \geq m^*$, \eqref{disjoint_Os} guarantees that $\theta_m(y) = \theta_{m^*}(y)$. Now as by the dominated convergence theorem, $\theta$ is the pointwise limit of the $\theta_m$, we can conclude that
\begin{align}
    &1 \leq \theta \leq 2 \ \mathrm{on} \ \R^{d-1},\\
    &\theta \equiv 2 \ \mathrm{on} \ \R^{d-1}\setminus \bigcup_{m \in \N} \overline{O_m}, \\
    & \theta^{-1}([1,2)) \subset \bigcup_{m \in \N} \bigcup_{x \in D_m} B_{r_m/5}(x).
\end{align}
Moreover, we get that for each $m \in \N$, $x\in D_m$, it holds 
\begin{align}
    \theta \equiv 1 \ \mathrm{on}\ B_{\rho_m/10}(x).
\end{align}
\begin{proposition}\label{lebesgue_pts_theta}
    The set $N_\theta \subset \R^{d-1}$ of non-Lebesgue points of $\theta$ can be chosen as a subset of the $\mathcal{L}^{d-1}$-null set $\bigcup_{m \in \N} \partial O_m$.
\end{proposition}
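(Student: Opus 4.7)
The plan is to show that every point $y \in \R^{d-1} \setminus \bigcup_{m \in \N} \partial O_m$ is a Lebesgue point of $\theta$. By \eqref{disjoint_Os} (and the symmetric version, which follows by swapping the indices), the closed sets $\overline{O_m}$ are pairwise disjoint, so I split such a $y$ into two cases: either $y \in O_m$ for some $m$ (interior), or $y \in \R^{d-1} \setminus \bigcup_{m \in \N} \overline{O_m}$.

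For the interior case, pairwise disjointness of the $\overline{O_j}$ gives a neighbourhood $U$ of $y$ contained in $O_m$ and disjoint from every other $\overline{O_j}$. On $U$, the telescoping identity \eqref{explicit_theta_for_cauchy_sequence} together with $\psi_j \equiv 2$ off $O_j$ shows that $\theta$ coincides with $\psi_m$, and hence is smooth. Any point of continuity of an $L^1_{\rm loc}$-function is a Lebesgue point, so $y \notin N_\theta$.

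For the exterior case the key point is that $y$ may have infinitely many $\overline{O_k}$ accumulating nearby (indeed Lemma \ref{D_m_dense} tells us $\bigcup_m D_m$ is dense in $S$), so I cannot argue by local constancy. Instead, I use that $\theta(y)=2$ together with the pointwise bound $|\theta - 2| \leq \mathbbm{1}_{\bigcup_m \bigcup_{x \in D_m} B_{r_m/5}(x)}$, which follows from the explicit description of $\theta$ preceding the proposition and from $\eta \equiv 2$ outside $B_{3/4}(0)$. For any $r>0$, this gives
\begin{align*}
  \int_{B_r(y)} |\theta - 2|\, \di \mathcal{L}^{d-1} \leq \sum_{m \in \N}\sum_{\substack{x \in D_m \\ \overline{B_{r_m/5}(x)} \cap \overline{B_r(y)} \neq \emptyset}} \mathcal{L}^{d-1}(B_{r_m/5}(x)) \leq C(d) \sum_{m \in \N} r_m^{d-1} \zeta_m(y,r),
\end{align*}
with $\zeta_m(y,r)$ as in the proof of Lemma \ref{shrinking_ball_om_behaviour}. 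Dividing by $\mathcal{L}^{d-1}(B_r(y)) \sim r^{d-1}$ and invoking the estimate \eqref{volume_sum_shrinking_ball} of that lemma, the right-hand side tends to $0$ as $r\to 0$. Hence $y$ is a Lebesgue point of $\theta$.

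The only mildly delicate point is justifying the smoothness claim in the interior case cleanly — one has to observe that the tail $\sum_{j > m}(\psi_j - 2)$ vanishes on the neighbourhood $U$ because $U\cap \overline{O_j} = \emptyset$ for all $j\neq m$ by pairwise disjointness. The bulk of the work, and the reason Lemma \ref{shrinking_ball_om_behaviour} was set up in advance, lies in the exterior case, where the quantitative decay \eqref{volume_sum_shrinking_ball} does all the heavy lifting. Finally, $\bigcup_{m\in\N}\partial O_m$ is a countable union of finite collections of $(d-2)$-spheres, hence a $\mathcal{L}^{d-1}$-null set, so it is a legitimate choice for $N_\theta$.
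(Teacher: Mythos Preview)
Your proof is correct and follows essentially the same approach as the paper: both split into the interior case $y \in O_m$ (where $\theta$ is locally smooth) and the exterior case $y \in \R^{d-1} \setminus \bigcup_m \overline{O_m}$ (where the pointwise bound $|\theta - 2| \leq \mathbbm{1}_{\bigcup_m \bigcup_{x \in D_m} B_{r_m/5}(x)}$ together with the volume estimate \eqref{volume_sum_shrinking_ball} of Lemma~\ref{shrinking_ball_om_behaviour} does the work). Your justification of the interior case via the telescoping identity \eqref{explicit_theta_for_cauchy_sequence} is slightly more explicit than the paper's one-line appeal to continuity, but the argument is otherwise identical.
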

\begin{proof}
    For each point $y\in \R^{d-1} \setminus \bigcup_{m \in \N} \overline{O_m}$, we can apply \eqref{volume_sum_shrinking_ball} and get that 
    \begin{align*}
        \frac{1}{\mathcal{L}^{d-1}(B_r(y))}\int_{B_r(y)} |\theta(x)-2| \di \mathcal{L}^{d-1} &\leq  \frac{\mathcal{L}^{d-1}(B_r(y) \cap \bigcup_{m\in \N} \bigcup_{x \in D_m} B_{r_m/5}(x))}{\mathcal{L}^{d-1}(B_r(y))} \\
        & \leq C \frac{\sum_{m \in \N}r^{d-1}_m\#\{x \in D_m: \overline{B_{r_m/5}(x)} \cap \overline{B_r(y)}\neq \emptyset\}}{r^{d-1}} \to 0\ \mathrm{as}\ r \to 0.
    \end{align*}
    For each point $y \in \bigcup_{m \in \N} O_m$, we have that $\theta$ is continuous in a neighbourhood of $y$, hence $y$ is a Lebesgue point. For each $m$, we have that $\partial O_m$ is a $\mathcal{L}^{d-1}$-null set, hence so is their countable union. 
\end{proof}

We are now ready to define the metric $g$ on $M:= \R^{d}$ via
\begin{align}
    g(x', x_d) = \theta(x') \mathrm{Id},\ x' \in \R^{d-1}, x_d \in \R. 
\end{align}

Again $(M, g)$ satisfies Assumption \ref{assumption_on_mfd} and $(M, \sfd_g, \vol_g)$ satisfies \eqref{condition_on_measure_bounded_compression}.  

\subsubsection{Analysis of quasi-Riemannianity}\label{subsub_quasi_riem}
We now have enough information to prove Theorem \ref{snd_counterexample}.
Note that by \eqref{union_of_binary_balls_small_measure} and \eqref{def_set_s}, we have that $S \subset (0,1)^{d-1}$ satisfies $\mathcal{L}^{d-1}(S)>0$ and $\theta = 2$ on $S$ (see Proposition \ref{lebesgue_pts_theta}).
Then consider $E := S \times (0,\frac{1}{2}) \subset M$. Then $\vol_g(E)\geq \mathcal{L}^d(E) >0$. For $y \in E$, define the curve $\gamma^y:[0,1] \to (0,1)^{d-1} \times (-2, 2), t \mapsto y+ te_d$ and define the test plan $\boldsymbol{\pi} \in \mathcal{P}(C([0,1], \R^d))$ via
\begin{align*}
    \di\boldsymbol{\pi}(\gamma):= \left\{ \begin{array}{ll}
           \frac{1}{\mathcal{L}^d(E)}\di\mathcal{L}^d(y) & \mathrm{if}\ \gamma = \gamma^{y}, \ \mathrm{for \ some} \ y \in  E, \\
           0\  & \mathrm{otherwise}.
        \end{array}\right.
\end{align*}
Now the strategies from Subsection \ref{subsec:first_counter_quasi_riem} together with Lemma \ref{D_m_dense} and Proposition \ref{lebesgue_pts_theta} imply that for all $y \in E$, and almost every $t \in [0,1]$, it holds $|\dot{\gamma}^y_t|=1 < \sqrt{2} = |\dot{\gamma}^y_t|_g$. Hence, $\boldsymbol{\pi}$ is supported on curves for which the metric speed is strictly smaller than the norm of the derivative, for almost every time. 

Now we can consider the function $f\in C^1(M)$ given by $f:(x', x_d) \to x_d$. Then $f$ has slope $|Df|=1$ on $S \times \R$, but $|\nabla_g f|_g <1$ on $S \times \R$. 
Finally, $f$ and $\boldsymbol{\pi}$ together yield that $|Df|_w > |\nabla_g f|_g$ on a set of positive measure, which proves Theorem \ref{snd_counterexample}.

We will conclude this observation with the following remark.

\smallskip
\begin{remark}
    From the Sobolev embedding theorem, we get that for a $d$-dimensional manifold with a Riemannian metric $g \in W^{1,p}_{\rm loc}$ for some $p > d$, that $g$ is automatically continuous. Then the results we recalled in Subsection \ref{subsec:2.2}, yield that $(M, \sfd_g, g)$ satisfies condition (iii) from Definition \ref{def:quasi_riemannian_manifold} for $C^1$-functions. Theorem \ref{snd_counterexample} shows that this is no longer given if $d \geq 3$ and $p \leq d-1$ and therewith works towards solving Problem 2 in \cite{de1990conversazioni}, Chapter 3: spazi metrici quasi-riemanniani: the question what regularity assumption on $g$ is necessary such that the $\sfd_g$-slope of a Lipschitz function equals the $g$-norm of the gradient almost everywhere  (see also (2.2), Theorem (2.18), and Theorem (6.1) in \cite{de1991integral}). 
\end{remark}

\subsubsection{Analysis of infinitesimal Hilbertianity}\label{subsub:hilbertian}
From here, we will analyse the infinitesimal structure of the metric measure space $(M, \sfd_g, \vol_g)$ and prove Theorem \ref{hilbertianity_counterexample}.
Pick a point $y=(y', y_d) \in M$ such that $y' \in S$. We will work in a neighbourhood around $y$ and restrict to local coordinates. Pick a vector $u \in \R^d$. 
We want to understand the metric speed at time $0$ of the curve $\gamma^{y, u}: [0, \xi] \to \R^d, t \mapsto y+tu$, $\xi >0$. 
Define the null set 
\begin{align}
    N_g:= (\partial [0,1]^{d-1} \cup \bigcup_{m \in \N} \partial O_m) \times \R \subset M. 
\end{align}
Fix an $\e>0$ and find an $r_\e>0$ such that for $r \in (0,r_\e)$, the expressions in \eqref{diameter_sum_shrinking_balls} and \eqref{volume_sum_shrinking_ball} are smaller than or equal to $\e$. 
Now for some $0 <r < r_\e$ choose $z \in M$ such that $\sfd_{euc}(z,y) = \frac{r}{12}$, which using that $\sfd_g$ and $\sfd_{euc}$ are $\sqrt{2}$-equivalent implies that any $\frac{4}{3}$-shortest curve transversal to $N_g$ (i.e. a curve $\gamma \perp N_g$ from $y$ to $z$, such that $L_g(\gamma) \leq \frac{4}{3}\sfd_g(y,z)$) from $y$ to $z$ lies inside $B_r^{euc}(y)$. Pick such a curve $\gamma$ and assume it to be $1$-$\sfd_{euc}$-Lipschitz (which can be done if $r$ is small enough, as $\sfd_g$ and $\sfd_{euc}$ are $\sqrt{2}$-equivalent). 

Denote 
\begin{align}
    \Omega_m := \bigcup_{x \in D_m} B^{\R^{d-1}}_{r_m/5}(x) \subset (-1, 2)^{d-1}.
\end{align}
%
We first note that by the definition of the metric $g$ it holds that 
\begin{align}\label{length_of_curve_splitted_in_sets}
    L_g(\gamma) \geq  \int_{\gamma^{-1}((\bigcup_{m\in \N}\overline{\Omega_m})^c\times \R)} \sqrt{2}|\dot{\gamma}_t|_{euc} \, \di t + \int_{\gamma^{-1}((\bigcup_{m\in \N}\overline{\Omega_m})\times \R)} |\dot{\gamma}_t|_{euc} \, \di t.
\end{align}
This is in the spirit of the step 2 in Subsection \ref{subsec:first_counter_inf_hilbertian}, with the sole difference that we do not explicitly know $\gamma^{-1}(\{g \equiv \mathrm{Id}\})$, but we can use that $\gamma^{-1}(\{g \equiv \mathrm{Id}\}) \subset \gamma^{-1}((\bigcup_{m\in \N}\overline{\Omega_m})\times \R)$. Very roughly speaking, we will show that an efficient curve $\gamma$ mainly moves in the $d$-th coordinate direction in $\gamma^{-1}((\bigcup_{m\in \N}\overline{\Omega_m})\times \R)$. 

Now observe that 
\begin{align*}
    \gamma^{-1}\Big(\bigcup_{m \in \N}\overline{\Omega_m}\times \R\Big) = \bigcup_{m \in \N} \gamma^{-1}(\overline{\Omega_m}\times \R). 
\end{align*}
Hence, we can pick a $J \in \N$ such that 
\begin{align}\label{small_measure_preimage_choice_n}
    \mathcal{L}^1\Big(\gamma^{-1}\Big(\bigcup_{m \in \N}\overline{\Omega_m}\times \R\Big)\Big) \leq \frac{r\e}{3} + \mathcal{L}^1\Big(\gamma^{-1}\Big(\bigcup_{m \leq J}\overline{\Omega_m}\times \R\Big)\Big). 
\end{align}
As $\gamma$ is $1$-$\sfd_{euc}$-Lipschitz continuous, \eqref{small_measure_preimage_choice_n} implies that $\gamma$ is very ``short" in $\gamma^{-1}(\bigcup_{m > J}\overline{\Omega_m}\times \R)$, so, again very roughly speaking, it suffices to study $\gamma$ on $\gamma^{-1}(\bigcup_{m \leq J}\overline{\Omega_m}\times \R)$ to understand $\gamma$ on $\gamma^{-1}(\bigcup_{m \in \N}\overline{\Omega_m}\times \R)$. 

For any $x \in \bigcup_{m \in \N} D_m$, write $m(x)$ as the natural number such that $x \in D_{m(x)}$. Now the set  $\bigcup_{m \leq J} D_m$ is finite so we may write it as $\{x_l\}_{l=1}^k \subset \R^{d-1}$ for some $k \in \N$. Write $m_l:= m(x_l)$. In the following, we will essentially argue that the most efficient way to get from $\gamma_0$ to $\gamma_1$ is to pass the set $\overline{B^{\R^{d-1}}_{r_{m_l}/5}(x_l)} \times \R$ at most once for each $l \in \{1, \ldots, k\}$.

We will inductively define Lipschitz-continuous curves $\gamma^{(l)}:[0,1] \to \R^d$ from $y$ to $z$ for $l=0, \ldots, k$ as follows:
Define $\gamma^{(0)}:= \gamma$. 
Now for $l \geq 1$ define 
\begin{align*}
    &t_l^{\min}:= \min(t \in [0,1]: \gamma^{(l-1)}_t \in \overline{B^{\R^{d-1}}_{r_{m_l}/5}(x_l)}\times \R), \\
    &t_l^{\max}:= \max(t \in [0,1]: \gamma^{(l-1)}_t \in \overline{B^{\R^{d-1}}_{r_{m_l}/5}(x_l)}\times \R). 
\end{align*}
Define
\begin{align}
    w_l:= \int_{t_l^{\min}}^{t_l^{\max}} \dot{\gamma}^{(l-1)}_t \, \di t = \gamma_{t_l^{\max}}- \gamma_{t_l^{\min}} \in \R^d, 
\end{align}
and \begin{align*}
     \gamma^{(l)}_t:= \left\{ \begin{array}{ll}
           \gamma^{(l-1)}_t & \mathrm{if}\ t \in [0, t_l^{\min}], \\
           \gamma^{(l-1)}_{t_l^{\min}} + \frac{t-t_l^{\min}}{t_l^{\max}-t_l^{\min}}w_l & \mathrm{if} \ t \in [t_l^{\min}, t_l^{\max}],\\
           \gamma^{(l-1)}_t & \mathrm{if}\ t \in [t_l^{\max}, 1].
        \end{array}\right.
\end{align*}
\textbf{Claim 1.} For each $l = 1, \ldots, k$, $\gamma^{(l)}$ satisfies 
\begin{align}\label{l-th_curve_preimage_length_estimate}
    &\int_{(\gamma^{(l-1)})^{-1}((\bigcup_{m\in \N}\overline{\Omega_m})^c\times \R)} \sqrt{2}|\dot{\gamma}^{(l-1)}_t|_{euc} \, \di t + \int_{(\gamma^{(l-1)})^{-1}((\bigcup_{m\in \N}\overline{\Omega_m})\times \R)} |\dot{\gamma}^{(l-1)}_t|_{euc} \, \di t \nonumber \\
    &\geq \int_{(\gamma^{(l)})^{-1}((\bigcup_{m\in \N}\overline{\Omega_m})^c\times \R)} \sqrt{2}|\dot{\gamma}^{(l)}_t|_{euc} \, \di t + \int_{(\gamma^{(l)})^{-1}((\bigcup_{m\in \N}\overline{\Omega_m})\times \R)} |\dot{\gamma}^{(l)}_t|_{euc} \, \di t.
\end{align}
\begin{proof}[Proof of claim 1]
This can be seen by noticing that 
\begin{align}\label{preimages_varying_in_lth_transformation_of_curve}
    &(\gamma^{(l)})^{-1}((\bigcup_{m\in \N}\overline{\Omega_m})^c\times \R) = (\gamma^{(l-1)})^{-1}((\bigcup_{m\in \N}\overline{\Omega_m})^c\times \R) \setminus [t_l^{\min}, t_l^{\max}] \ \mathrm{and} \nonumber \\
    &(\gamma^{(l)})^{-1}((\bigcup_{m\in \N}\overline{\Omega_m})\times \R) = (\gamma^{(l-1)})^{-1}((\bigcup_{m\in \N}\overline{\Omega_m})\times \R) \cup [t_l^{\min}, t_l^{\max}].
\end{align}
Moreover, $\gamma^{(l-1)} = \gamma^{(l)}$ on $[t_l^{\min}, t_l^{\max}]^c$ and, as $\overline{B^{\R^{d-1}}_{r_{m_l}/5}(x_l)} \times \R \subset \R^d$ is convex, it holds that $\gamma^{(l)}([t_l^{\min}, t_l^{\max}]) \subset \overline{B^{\R^{d-1}}_{r_{m_l}/5}(x_l)} \times \R$. Thus, we have that
\begin{align}\label{integral_outside_l_interval}
    &\int_{[t_l^{\min}, t_l^{\max}]^c \cap (\gamma^{(l-1)})^{-1}((\bigcup_{m\in \N}\overline{\Omega_m})^c\times \R)} \sqrt{2}|\dot{\gamma}^{(l-1)}_t|_{euc} \, \di t + \int_{[t_l^{\min}, t_l^{\max}]^c \cap(\gamma^{(l-1)})^{-1}((\bigcup_{m\in \N}\overline{\Omega_m})\times \R)} |\dot{\gamma}^{(l-1)}_t|_{euc} \, \di t \nonumber \\
    &= \int_{[t_l^{\min}, t_l^{\max}]^c \cap(\gamma^{(l)})^{-1}((\bigcup_{m\in \N}\overline{\Omega_m})^c\times \R)} \sqrt{2}|\dot{\gamma}^{(l)}_t|_{euc} \, \di t + \int_{[t_l^{\min}, t_l^{\max}]^c \cap(\gamma^{(l)})^{-1}((\bigcup_{m\in \N}\overline{\Omega_m})\times \R)} |\dot{\gamma}^{(l)}_t|_{euc} \, \di t,
\end{align}
and 
\begin{align}\label{integral_in_l_interval}
    &\int_{[t_l^{\min}, t_l^{\max}] \cap (\gamma^{(l-1)})^{-1}((\bigcup_{m\in \N}\overline{\Omega_m})^c\times \R)} \sqrt{2}|\dot{\gamma}^{(l-1)}_t|_{euc} \, \di t + \int_{[t_l^{\min}, t_l^{\max}] \cap(\gamma^{(l-1)})^{-1}((\bigcup_{m\in \N}\overline{\Omega_m})\times \R)} |\dot{\gamma}^{(l-1)}_t|_{euc} \, \di t \nonumber \\
    &\geq \int_{[t_l^{\min}, t_l^{\max}]}|\dot{\gamma}_t^{(l-1)}|_{euc} \, \di t \geq |w_l|_{euc}=  \int_{[t_l^{\min}, t_l^{\max}] \cap(\gamma^{(l)})^{-1}((\bigcup_{m\in \N}\overline{\Omega_m})\times \R)} |\dot{\gamma}^{(l)}_t|_{euc} \, \di t.
\end{align}
The sum of \eqref{integral_outside_l_interval} and \eqref{integral_in_l_interval} yields the claim.
\end{proof}

Furthermore, we observe that the observations after \eqref{preimages_varying_in_lth_transformation_of_curve} together with $\gamma^{(l)}([t_j^{\min}, t_l^{\max}]) \subset (\bigcup_{m\leq J}\overline{\Omega_m})\times \R \subset (\bigcup_{m> J}\overline{\Omega_m})^c\times \R$ imply that 
\begin{align}\label{small_measure_preimage_of_O_m_large_m}
    \mathcal{L}^1((\gamma^{(l)})^{-1}((\bigcup_{m>J }\overline{\Omega_m})\times \R)) &= \mathcal{L}^1((\gamma^{(l-1)})^{-1}((\bigcup_{m>J }\overline{\Omega_m})\times \R)\setminus [t_l^{\min}, t_l^{\max}]) \nonumber\\
    & \leq  \mathcal{L}^1((\gamma^{(l-1)})^{-1}((\bigcup_{m>J}\overline{\Omega_m})\times \R)),
\end{align}
for all $l =1, \ldots, k$.  For $l' \leq l$ denote by $I^{(l)}_{l'}:= (\gamma^{(l)})^{-1}(\overline{B^{\R^{d-1}}_{r_{m_{l'}}/5}(x_{l'})}\times \R)$.

\textbf{Claim 2.} For each $l=1, \ldots, k$, $l' \leq l$, we have that  $I^{(l)}_{l'}$ is a closed interval.
\begin{proof}[Proof of claim 2.]
We will prove this by induction. For $l=1$ the claim is true as $(\gamma^{(1)})^{-1}(\overline{B^{\R^{d-1}}_{r_{m_{1}}/5}(x_{1})}\times \R)= [t_1^{\min}, t_1^{\max}]$. Now assume the claim holds true for some $l <k$. 
Now we have that $(\gamma^{(l+1)})^{-1}(\overline{B^{\R^{d-1}}_{r_{m_{l+1}}/5}(x_{l+1})}\times \R)= [t_{l+1}^{\min}, t_{l+1}^{\max}]$. 
By the construction of $\gamma^{(l+1)}$, \eqref{disjoint_Os}, and \eqref{m-th_gridpoints_are_separated}, we have that $t_{l+1}^{\min}, t_{l+1}^{\max} \notin I^{(l)}_{l'}$.
Hence, as $I^{(l)}_{l'}$ is a closed interval by the induction hypothesis, it holds that $[t_{l+1}^{\min}, t_{l+1}^{\max}] \supset I^{(l)}_{l'}$, or  $[t_{l+1}^{\min}, t_{l+1}^{\max}] \cap I^{(l)}_{l'} = \emptyset$. 
In the first case, we get that $I^{(l+1)}_{l'} = \emptyset$ and in the second one it holds $I^{(l+1)}_{l'} = I^{(l)}_{l'} $. Both are closed intervals, which finishes the induction step and proves the claim.
\end{proof} 

Finally, we note that for all $l=0, \ldots, k$, $\gamma^{(l)}$ is $1$-$\sfd_{euc}$-Lipschitz. This is true by assumption for $l=0$. For $l >0$ this follows inductively, using that in $\R^d$, $w_l = \gamma^{(l-1)}_{t^{\max}_l}- \gamma^{(l-1)}_{t^{\min}_l}$, hence by the induction hypothesis, $|w_l|_{euc} \leq t^{\max}_l-t^{\min}_l$. 

Now we can combine \eqref{length_of_curve_splitted_in_sets} with \eqref{l-th_curve_preimage_length_estimate} to get that 
\begin{align*}
    L_g(\gamma) \geq \int_{(\gamma^{(k)})^{-1}((\bigcup_{m\in \N}\overline{\Omega_m})^c\times \R)} \sqrt{2}|\dot{\gamma}^{(k)}_t|_{euc} \, \di t + \int_{(\gamma^{(k)})^{-1}((\bigcup_{m\in \N}\overline{\Omega_m})\times \R)} |\dot{\gamma}^{(k)}_t|_{euc} \, \di t.
\end{align*}
Now \eqref{small_measure_preimage_choice_n} and \eqref{small_measure_preimage_of_O_m_large_m}, together with the fact that $\gamma^{(k)}$ is $1$-$\sfd_{euc}$-Lipschitz continuous yield that 
\begin{align}\label{gamma_length_vs_gamma_k}
    L_g(\gamma) + r\e \geq \int_{(\gamma^{(k)})^{-1}((\bigcup_{m\leq J}\overline{\Omega_m})^c\times \R)} \sqrt{2}|\dot{\gamma}^{(k)}_t|_{euc} \, \di t + \int_{(\gamma^{(k)})^{-1}((\bigcup_{m\leq J}\overline{\Omega_m})\times \R)} |\dot{\gamma}^{(k)}_t|_{euc} \, \di t.
\end{align}
As in the previous example, we define
\begin{align*}
    &v_{(1)}:= \int_{(\gamma^{(k)})^{-1}((\bigcup_{m\leq J}\overline{\Omega_m})\times \R)}\dot{\gamma}^{(k)} \, \di t \in \R^d,\\
    &v_{(2)}:= \int_{(\gamma^{(k)})^{-1}((\bigcup_{m\leq J}\overline{\Omega_m})^c\times \R)}\dot{\gamma}^{(k)} \, \di t \in \R^d.
\end{align*}
Then, 
\begin{align*}
    v_{(1)}+v_{(2)} = z-y.
\end{align*}
Applying classical facts on the Euclidean space to \eqref{gamma_length_vs_gamma_k} yields that 
\begin{align}\label{length_gamma_vs_two_straight_linea_a}
    L_g(\gamma) + r\e \geq \sqrt{2}|v_{(2)}|_{euc} + |v_{(1)}|_{euc}. 
\end{align}
Let $P^{(d-1)}: \R^d \to \R^d$ be the orthogonal projection mapping $\R^{d-1} \times \R\ni (x', x_d) \mapsto (x', 0)$. 
Now by Claim 2 together with our choice of $r_\e$ (see \eqref{diameter_sum_shrinking_balls}), we get that 
\begin{align}\label{pv_1_short_in_first_d-1_dimensions}
    |P^{(d-1)} v_{(1)}|_{euc} &= \Big|\sum_{l=1}^k \int_{(\gamma^{(k)})^{-1}(\overline{B^{\R^{d-1}}_{r_{m_l}/5}(x_l)}\times \R)} P^{(d-1)}\dot{\gamma}^{(k)}_t \, \di t \Big|_{euc} \nonumber \\
    &\leq \sum_{l=1}^k \Big|\int_{(\gamma^{(k)})^{-1}(\overline{B^{\R^{d-1}}_{r_{m_l}/5}(x_l)}\times \R)} P^{(d-1)}\dot{\gamma}^{(k)}_t \, \di t \Big|_{euc} \nonumber \\
    &= \sum_{l=1}^k |P^{(d-1)}\gamma^{(k)}(\max I^{(k)}_l)-P^{(d-1)}\gamma^{(k)}(\min I^{(k)}_l)| \nonumber \\
    &\leq \sum_{l=1}^k \diam(\overline{B^{\R^{d-1}}_{r_{m_l}/5}(x_l)}) \nonumber \\
    & \leq 2r \frac{\sum_{m \in \N}\frac{r_m}{5}|\{x \in D_m: \overline{B^{\R^{d-1}}_{r_m/5}(x)} \cap \overline{B^{\R^{d-1}}_r(y')}\neq \emptyset\}|}{r} \leq 2r\e,
\end{align}
where we wrote $y=(y', y_d) \in \R^{d-1} \times \R$. 
Now define 
\begin{align*}
    w_{(1)}:= v_{(1)}- P^{(d-1)}(v_{(1)}) =  (v_{(1)})_d e_d \in \mathrm{span}(e_d) \subset \R^d , \ w_{(2)} = v_{(2)}+  P^{(d-1)}(v_{(1)}) \in \R^d. 
\end{align*}
Then $w_{(1)}+w_{(2)}= z-y$ and 
\begin{align*}
    |w_{(1)}-v_{(1)}|_{euc}, |w_{(2)}-v_{(2)}|_{euc} \leq 2r\e. 
\end{align*}
Hence, with \eqref{length_gamma_vs_two_straight_linea_a}, we get that
\begin{align}\label{length_gamma_vs_two_straight_linea_b}
    L_g(\gamma) + 6r\e \geq \sqrt{2}|w_{(2)}|_{euc} + |w_{(1)}|_{euc}, 
\end{align}
for vectors $w_{(1)},w_{(2)}$ with $w_{(1)}+w_{(2)} = z-y$ and $w_{(1)} \in \mathrm{span}(e_d)$. We want to minimise the right hand side of \eqref{length_gamma_vs_two_straight_linea_b} under the thereafter mentioned constraints. This will be done similarly as in the previous example (see step 3, from \eqref{length_estimate_w_vectors} onwards). 

Write again $z-y = u = (u', u_d) \in \R^{d-1} \times \R$. By potentially applying a linear coordinate transform from $SO(d-1)$, we may assume that $u' \in \mathrm{span}(e_1)$. Then we reduce the right-hand-side of \eqref{length_gamma_vs_two_straight_linea_b} by orthogonally projecting $w_{(1)}$ and $w_{(2)}$ onto $\mathrm{span}(e_1, e_d)$. 
Now if $u \in \mathrm{span}(e_1)$, we get that $w_{(1)} =0$ and hence
\begin{align*}
    L_g(\gamma) + 6r\e \geq \sqrt{2}|z-y|_{euc}.
\end{align*}
Otherwise, we have that $u_d \neq 0$. Then $u$ can be written as $a(be_1 + e_d)$. Then writing $w_{(1)} = a(1-c)e_d$ and $w_{(2)} = a(be_1 + ce_d)$ reduces to minimising $f_b(c)$ as defined in \eqref{one_dim_miminisation_problem}. Together with $r = 12|u|_{euc}$, this yields 
\begin{align*}
    L_g(\gamma) \geq \left\{ \begin{array}{ll}
           |u_d| + |u'|_{euc} - 100\e|u|_{euc} & \mathrm{if}\ |u_d| \geq |u'|_{euc}, \\
           \sqrt{2}|u|_{euc}- 100\e|u|_{euc}\  & \mathrm{otherwise}.
        \end{array}\right.
\end{align*}
As before, together with $\sfd_g = \boldsymbol{\delta} \geq \boldsymbol{\delta}_{N_g}$ (see \eqref{null_set_def_of_metric}) this yields  
\begin{align}\label{asymptotic_lower_bound_metric_ex2}
    \sfd_g(y, y+u) \geq \left\{ \begin{array}{ll}
           |u_d| + |u'|_{euc} - 100\sigma(y, u)|u|_{euc} & \mathrm{if}\ |u_d| \geq |u'|_{euc}, \\
           \sqrt{2}|u|_{euc}- 100\sigma(y, u)|u|_{euc}\  & \mathrm{otherwise},
        \end{array}\right.
\end{align}
where 
\begin{align*}
    \sigma(y,u) := \frac{\sum_{m \in \N}\frac{2r_m}{5}|\{x \in D_m: \overline{B^{\R^{d-1}}_{r_m/5}(x)} \cap \overline{B^{\R^{d-1}}_{|u|_{euc}}(y')}\neq \emptyset\}|}{|u|_{euc}} \to 0,\ \mathrm{as} \ |u|_{euc}  \to 0,
\end{align*}
as of Lemma \ref{shrinking_ball_om_behaviour}.

We will next prove an upper bound on $\sfd_g(y,z)$ and work with the regularisation. Fix $\epsilon >0$. For any $\varsigma >0$ and $g_\varsigma := \rho_{\varsigma} * g$, it holds $g_\varsigma \leq 2 \mathrm{Id}$. Hence,
\begin{align*}
    \sfd_{g_\varsigma}(y,z) \leq \sqrt{2}|u|_{euc}.
\end{align*}
By Lemma \ref{D_m_dense}, we can find an $m \in \N$ and an $x \in D_m$ such that $|P^{(d-1)}(y)-(x, 0)| \leq \epsilon|u|_{euc}$. Now for $\varsigma \in (0, \rho_m/20)$, it holds $g_\varsigma =1$ on $\{x\} \times \R$. If $|u_d|> |u'|_{euc}$, write $b= \frac{|u'|_{euc}}{|u_d|}$ and note that the concatenation of the curves
\begin{align*}
    &\gamma_1:[0,1] \to \R^d, t \mapsto y+t((x,0)-P^{(d-1)}y), \\
    &\gamma_2:[0,1] \to \R^d, t \mapsto y+((x,0)-P^{(d-1)}y) + t(1-b)u_de_d, \\
    &\gamma_3:[0,1] \to \R^d, t \mapsto  y+((x,0)-P^{(d-1)}y) + (1-b)u_de_d-t((x,0)-P^{(d-1)}y),\ \mathrm{and} \\
    &\gamma_4:[0,1] \to \R^d, t \mapsto  y + (1-b)u_de_d + t((u',0)+ bu_de_d),
\end{align*}
connects $y$ with $z$ and has $g_\varsigma$ length at most $|u'|_{euc} + |u_d|_{euc} + 3 \epsilon$. As $\epsilon$ was arbitrary, we get that 
\begin{align}\label{upper_bpund_metric_ex2}
    \sfd_g(y, y+u)= \lim_{\varsigma \to 0} \sfd_{g_\varsigma}(y, y+u) \leq \alpha(u):=  \left\{ \begin{array}{ll}
           |u_d| + |u'|_{euc} & \mathrm{if}\ |u_d| \geq |u'|_{euc}, \\
           \sqrt{2}|u|_{euc}\  & \mathrm{otherwise}.
        \end{array}\right.
\end{align}
Together with Lemma \ref{metric_speed_depends_on_direction}, this gives that for any $y \in S \times \R$, the metric speed of a curve $\gamma:[0,1] \to M$, $t\in [0,1]$ such that $\dot{\gamma}_t=: u$ exists and $\gamma_t = y$ is given by $\alpha(u)$. 

Now the argument works similarly as in the previous example. 
We note that for any $y \in (([0,1]^{d-1})^c \times \R) \cup (\bigcup_{m \in \N}O_m \times \R)$ we have that $g$ is continuous in a neighbourhood of $y$ and the metric speed of a curve is given by the $g$-norm of its derivative, wherever it exists. 
\smallskip
\begin{proposition}
    The metric measure space $(\R^{d}, \sfd_g, \vol_g)$ is not infinitesimally Hilbertian. 
\end{proposition}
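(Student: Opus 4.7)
The plan is to falsify the parallelogram identity for the Cheeger energy, which characterises infinitesimal Hilbertianity. Specifically, I will exhibit four functions $F_1,F_2,F_3,F_4\in W^{1,2}(\R^d,\vol_g)$ satisfying $F_3=F_1+F_2$ and $F_4=F_1-F_2$ such that
\begin{align*}
    2\Ch(F_1)+2\Ch(F_2)-\Ch(F_3)-\Ch(F_4)\neq 0,
\end{align*}
which contradicts $\Ch$ being a quadratic form. The building blocks are $f_1(x)=\phi(x)x_1$ and $f_2(x)=\phi(x)|x_d|$ (with $\phi$ the transverse cutoff introduced above) together with $f_3=f_1+f_2$, $f_4=f_1-f_2$; since these are not in $L^2(\vol_g)$ on $\R^d$, I pass to the $x_d$-truncated $f_{i,n}$ already defined in the excerpt.

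My first step is to identify the minimal weak upper gradients pointwise. Off $S\times\R$ the metric $g$ is continuous in a neighbourhood of each point, so Proposition \ref{weakgradientforc1} yields $|Df_i|_w=|\nabla_g f_i|_g$ there. On $S\times\R$ the metric speed of a differentiable curve $\gamma$ at $t$ equals $\alpha(\dot{\gamma}_t)$, by \eqref{upper_bpund_metric_ex2} combined with Proposition \ref{metric_speed_depends_on_direction}; a direction-by-direction case analysis in the spirit of \eqref{case_1f_1}--\eqref{case_2f_4} then produces the upper bounds $G_i$ for $|Df_i|_w$. The algebraic engine of the whole argument is the identity
\begin{align*}
    2G_1^2+2G_2^2-G_3^2-G_4^2 = 1 \quad \text{on } S\times\R,
\end{align*}
contrasted with the smooth-Riemannian identity $2|\nabla_g f_1|_g^2+2|\nabla_g f_2|_g^2=|\nabla_g f_3|_g^2+|\nabla_g f_4|_g^2$ holding wherever $g$ is locally smooth.

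I then transfer these computations to $f_{i,n}$ using the locality of the minimal weak upper gradient, obtaining the three-region decomposition of $|Df_{i,n}|_w$ spelled out in the excerpt. Splitting the integral of $2|Df_{1,n}|_w^2+2|Df_{2,n}|_w^2-|Df_{3,n}|_w^2-|Df_{4,n}|_w^2$ accordingly, the slab $S\times(-n,n)$ contributes $2n\,\vol_g(S\times(0,1))$, the complementary bulk contributes zero by the smooth identity, and the collar $(-3,3)^{d-1}\times([n,n+1]\cup[-n-1,-n])$ contributes a bounded term independent of $n$, obtained by translating the $n=1$ case. Since $\vol_g(S\times(0,1))>0$ by \eqref{union_of_binary_balls_small_measure} and \eqref{def_set_s}, the slab term dominates for $n$ large, producing the desired violation.

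The main obstacle is showing that $G_i$ is genuinely the \emph{minimal} weak upper gradient on $S\times\R$, not merely a weak upper gradient. I would recycle the test-plan scheme of Proposition \ref{minimal_weak_upper_gradient_ex1}: assume for contradiction that some weak upper gradient falls below $G_i-\e$ on a positive-measure set; pick a joint Lebesgue point via Lemma \ref{Lebesguepointsadvanced_c}; and concentrate a test plan on parallel translates of the affine curve $t\mapsto y+2\tau(t-\tfrac12)u_{(i)}$ in the direction $u_{(i)}\in\{e_1,e_d,e_1+e_d,e_1-e_d\}$ realising equality in $|\partial_{u_{(i)}}f_i|/\alpha(u_{(i)})=G_i$. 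The admissibility of this test plan and the preservation of $N_g$-transversality rest on Lemma \ref{testplans_on_transversal} and on Lemma \ref{D_m_dense}, which together ensure that the affine curves realise the asymptotic lower bound \eqref{asymptotic_lower_bound_metric_ex2}; the chain of inequalities then contradicts \eqref{weakgradient} verbatim as in Proposition \ref{minimal_weak_upper_gradient_ex1}.
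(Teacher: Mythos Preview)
Your proposal is correct and follows essentially the same route as the paper: the same four functions $f_i$, the same truncations $f_{i,n}$, the same three-region decomposition of $|Df_{i,n}|_w$, and the same asymptotic computation showing the parallelogram defect grows like $2n\,\vol_g(S\times(0,1))$ against a bounded collar term. The minimality of $G_i$ is handled in the paper exactly as you propose, by invoking the test-plan argument of Proposition \ref{minimal_weak_upper_gradient_ex1} with the optimal directions $e_1,e_d,e_1\pm e_d$.
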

We will only provide a limited amount of detail, as the idea of the proof is very similar to the one of Proposition \ref{first_non_infinitesimally_hilb}.
\begin{proof}
Let $\Tilde{\phi} \in C_c^\infty(\R^{d-1})$ such that $\Tilde \phi((-2, 2)^{d-1}) = \{1\}$ and $\Tilde \phi \equiv 0$ on $((-3, 3)^{d-1})^c$. Define $\phi \in C^\infty(M)$ locally via $\phi: \R^{d-1} \times \R, (x', x_d) \mapsto \Tilde \phi (x')$ 
Now consider the functions 
\begin{align*}
    f_1: x \mapsto \phi(x) x_1, f_2: x \mapsto \phi(x) |x_d|, f_3 = f_1+f_2, f_4 = f_1-f_2. 
\end{align*}
Define the functions $G_i$ for $i =1,\ldots, 4$ $\vol_g$-almost everywhere (more precisely on $M\setminus N_g$) via 
\begin{align*}
    &G_1(z):= \left\{ \begin{array}{ll}
           \frac{1}{\sqrt{2}} & \mathrm{if}\ z \in S \times \R, \\
           |\nabla_g f_1|_{g} & \mathrm{otherwise}.
        \end{array}\right. ,G_2(z):= \left\{ \begin{array}{ll}
           1 & \mathrm{if}\ z \in S \times \R, \\
           |\nabla_g f_2|_{g}  & \mathrm{otherwise}.
        \end{array}\right. \\
        &G_3(z):= \left\{ \begin{array}{ll}
           1 & \mathrm{if}\ z \in S \times \R, \\
           |\nabla_g f_3|_{g}  & \mathrm{otherwise}.
        \end{array}\right. ,G_4(z):= \left\{ \begin{array}{ll}
           1 & \mathrm{if}\ z \in S \times \R, \\
           |\nabla_g f_4|_{g} & \mathrm{otherwise}.
        \end{array}\right. \\
\end{align*}
As in Proposition \ref{minimal_weak_upper_gradient_ex1}, one can show that $G_i$ is the minimal weak upper gradient of $f_i$ for $i=1, \ldots, 4$.
Now from the above definition, one can see that $f_i, G_i$ are in $L^\infty_{\rm loc}(M)$. From the definition, we have that outside $S \times \R$, we have that $2G_1^2+2G_2^2= G_3^2+G_4^2$ almost everywhere. Inside $S \times \R$, we have that 
\begin{align}\label{non-quadratic_cheegers}
  2G_1^2+2G_2^2-  (G_3^2+G_4^2) =1 \neq 0.  
\end{align}
As in the previous example, this is the main fact that will yield non-infinitesimal Hilbertianity. We only have to navigate around the problem that $G_i \notin L^2(\R^d,\vol_g)$. 
In a final step define a function $\eta \in C_c^\infty(\R)$ such that $\eta(x) = \eta(-x)$ for all $x \in \R$ and such that $\eta \equiv 1$ on $(-1, 1)$ and $\eta \equiv 0$ on $(-2, 2)^c$. For $n\geq 1$ define $\eta_n$ via $\eta_n \equiv 1$ on $(-n, n)$, $\eta_n(x)= \eta(|x|-n+1)$ for $|x| \geq n$ and for $1 \leq i \leq 4$, define the functions $f_{i, n}:\R^d\to \R$ via 
\begin{align*}
    &f_{1, n}: (x_1, \ldots, x_d) =x \mapsto \eta_n(x_d)\phi(x) x_1, \\
    &f_{2, n}: (x_1, \ldots, x_d) =x \mapsto \eta_n(x_d)\phi(x) (|x_d|-n+1),\\
    &f_{3, n} := f_{1, n} + f_{2, n}, \ f_{4, n} := f_{1, n} - f_{2, n}.
\end{align*}
Then $\supp \, f_{i, n} \subset [-3, 3]^{d-1} \times [-n-1, n+1]$. 
Now, with similar methods as before, using the fact that the minimal weak upper gradient depends locally on the function, we get that the minimal weak upper gradient $|Df_{i,n}|_w$ of $f_{i, n}$ can be described via
\begin{align*}
     &|Df_{i,n}|_w(z):= \left\{ \begin{array}{ll}
           G_i(z) & \mathrm{if}\ z \in S \times (-n, n), \\
           |\nabla_g f_{i, n}|_g(z) & \mathrm{if}\ z \in ((-3, 3)^{d-1}\setminus S) \times (-n, n), \\
           |Df_{i,1}|_w(z-(n-1)e_d) &\mathrm{if}\ z \in (-3, 3)^{d-1} \times [n, n+1], \\
            |Df_{i,1}|_w(z+(n-1)e_d) &\mathrm{if}\ z \in (-3, 3)^{d-1} \times [-n-1, -n], \\
            0 & \mathrm{otherwise}.
        \end{array}\right.
\end{align*}
Now, 
\begin{align*}
    &\int_M 2|Df_{1,n}|^2_w +2|Df_{2,n}|^2_w-|Df_{3,n}|^2_w-|Df_{4,n}|^2_w \,\di \vol_g \\
    &=  \int_{(-3, 3)^{d-1}\times (-n, n)} 2|Df_{1,n}|^2_w +2|Df_{2,n}|^2_w-|Df_{3,n}|^2_w-|Df_{4,n}|^2_w \,\di \vol_g \\
    &+ \int_{(-3, 3)^{d-1}\times( [-n-1, -n] \cup [n, n+1])} 2|Df_{1,n}|^2_w +2|Df_{2,n}|^2_w-|Df_{3,n}|^2_w-|Df_{4,n}|^2_w \,\di \vol_g \\
    &= n\int_{S\times (-1, 1)} 2G_1^2 +2G_2^2-G_3^2-G_4^2\, \di \vol_g \\
    &+ \int_{(-3, 3)^{d-1}\times((-2, -1)\cup (1, 2))} 2|Df_{1,1}|^2_w +2|Df_{2,1}|^2_w-|Df_{3,1}|^2_w-|Df_{4,1}|^2_w \,\di \vol_g\\
    &= 2n \vol_g(S \times (0,1)) + \int_{(-3, 3)^{d-1}\times((-2, -1)\cup (1, 2))} 2|Df_{1,1}|^2_w +2|Df_{2,1}|^2_w-|Df_{3,1}|^2_w-|Df_{4,1}|^2_w\, \di \vol_g, 
    \end{align*}
where in the last two steps, we first used the fact that $2|\nabla_g f_{1, n}|_g^2 + 2|\nabla_g f_{2, n}|_g^2 -|\nabla_g f_{3, n}|_g^2-|\nabla_g f_{4, n}|_g^2=0$ for all $n \in \N$ and then applied \eqref{non-quadratic_cheegers}.
We have that $|Df_{i,1}|^2_w \in L^\infty$ and hence the last integral in the above equation is finite, so as $\vol_g(S \times (0,1 ))>0$, we get that 
\begin{align}
    \lim_{n \to \infty} 2\Ch(f_{1, n})+ 2\Ch(f_{2, n}) -\Ch(f_{3, n}) -\Ch(f_{4, n}) = \infty. 
\end{align}
In particular, there exists an $n \in \N$ such that $2\Ch(f_{1, n})+ 2\Ch(f_{2, n}) -\Ch(f_{3, n}) -\Ch(f_{4, n}) \neq 0$.
\end{proof}

This proves Theorem \ref{hilbertianity_counterexample}, taking into account that for $p' \in [1, d-1]$, one has $W^{1,p'}_{\rm loc} \subset W^{1,d-1}_{\rm loc}$. 

\smallskip
\begin{remark}
    For $d \geq 4$, the above example shows that we can find a Riemannian metric $g$ on $\R^d$ of the Geroch-Traschen class that induces a non-infinitesimally Hilbertian metric measure space.
\end{remark}
\smallskip

\begin{remark}
    For a manifold with a $W^{1,p}_{\rm loc}$-Riemannian metric such that $p>d$, the Sobolev embedding theorem together with the results from \cite{mondino2025equivalence} immediately give us that $(M, \sfd_g, \vol_g)$ is infinitesimally Hilbertian. Theorem \ref{hilbertianity_counterexample} shows that $g, g^{-1} \in L^\infty_{\rm loc} \cap  W^{1,p}_{\rm loc}$ for $p \leq d-1$ is not sufficient for infinitesimal Hilbertianity. This works towards answering Question \ref{q3} leaving only the interval of $p \in (d-1, d]$, where it is unknown whether $g, g^{-1} \in L^\infty_{\rm loc} \cap  W^{1,p}_{\rm loc}$ is sufficient to get infinitesimal Hilbertianity. 
\end{remark}
\smallskip

\begin{remark}
    One motivation behind the definition of infinitesimal Hilbertianity was to distinguish Finslerian structures from Riemannian ones, in particular in the context of synthetic lower Ricci curvature bounds. The distance in this example arises from a Riemannian metric of low regularity; however the corresponding metric measure space does not satisfy infinitesimal Hilbertianity.
    The directional dependence of the metric speed of curves established in \eqref{asymptotic_lower_bound_metric_ex2} reveals the asymptotic shape of metric balls in the manifold and it is readily seen that the space $(M, \sfd_g, \vol_g)$ is not locally Minkowskian (see \cite{ohta2012non} and \cite{magnabosco2025canonical} for a discussion of infinitesimal Hilbertianity for locally Minkowskian spaces).  
    Moreover, the construction immediately implies that this space does not satisfy neither distributional lower Ricci curvature bounds nor the $\cd(K, \infty)$-condition for any $K \in \R$.
\end{remark}
\section*{Statements and Declarations}
\textbf{Conflict of interest statement.} The author has no relevant financial or non-financial
interests to disclose. The author has no conflict of interest to declare that are relevant to the content of this article.

\textbf{Data Availability.} Data sharing is not applicable to this article as no new data were created or analysed in this study.

\phantomsection
\addcontentsline{toc}{section}{References}
\bibliography{bibliographie}
\bibliographystyle{abbrv}
\end{document}